\date{}
\newtheorem{remark}{Remark} [section]
\newtheorem{example}{Example}[section]
\newcommand{\IR}{{\mathbb{R}}}
\newcommand{\IN}{{\mathbb{N}}}
\def\dsum{\displaystyle\sum}
\title{Equilibrium Problems on Riemannian Manifolds with Applications}
\author{Chong Li\thanks{School of Mathematical Sciences, Zhejiang University, Hangzhou 310027,
P. R. China (cli@zju.edu.cn). Research of this author was supported in part by the National Natural Science
Foundation of China (grant 11571308).}
\and Xiangmei Wang\thanks{College of Mathematics and Statistics, Guizhou University, Guiyang 550025, P. R. China (xmwang2@gzu.edu.cn). Research of this author was supported in part by the National Natural Science
Foundation of China (grant 11661019) and the Natural Science Foundation of Guizhou Province, China (grant 20161039).} \and Genaro L\'{o}pez\thanks{Department of Mathematical Analysis,
University of Sevilla. 1160, 41080-Sevilla, Spain
(glopez@us.es). Research of this author was supported in part by the DGES, grant MTM2015-65242-C2-1P
and Junta de Andaluc\'{\i}a,
grant FQM-127.}
\and Jen-Chih Yao\thanks{China Medical University, Taichung, Taiwan 40402, R.O.C. and Department of Mathematics, King Abdulaziz University, P.O. Box 80203, Jeddah 21589, Saudi Arabia (yaojc@mail.cmu.edu.tw). Research of this author was supported in part by the Grant MOST 102-2115-M-039-003-MY3.}}
\begin{document}
\maketitle
\slugger{mms}{xxxx}{xx}{x}{x--x}

\begin{abstract} We study the equilibrium problem on general Riemannian manifolds. The results on existence of solutions and on the convex structure of the solution set  are established. Our approach consists  in relating the equilibrium problem to a suitable variational inequality problem on Riemannian manifolds, and is completely different from previous ones on this topic in the literature.
As applications,   the corresponding    results for  the mixed variational inequality and the Nash equilibrium are obtained. Moreover, we formulate and analyze the convergence of  the proximal point algorithm for the equilibrium problem. In particular,   correct proofs are provided for the results claimed in J. Math. Anal. Appl. 388, 61-77, 2012  (i.e., Theorems 3.5 and 4.9 there)   regarding the existence of the mixed variational inequality and the domain of the resolvent for  the equilibrium problem  on Hadamard manifolds.
\end{abstract}

\begin{keywords} Riemannian manifold, equilibrium problem, variational inequality problem, proximal point algorithm\end{keywords}

\begin{AMS} Primary, 90C25; Secondary, 65K05\end{AMS}

\section{Introduction}

Let $X$ be a metric space, $Q\subseteq X$ a nonempty subset and $F:X\times X\rightarrow(-\infty,+\infty]$ a bifunction. The equilibrium problem (introduced by Blum and Oettli in \cite{Blum1994}), abbreviated as EP, associated to the pair ($F$, $Q$) is to find a point $\bar x\in Q$ such that
\begin{equation}\label{EP}
 F(\bar x,y)\ge0\quad\mbox{for any } y\in Q.
\end{equation}
As shown in \cite{Blum1994,Oettli1997}, EP contains, as special cases, optimization problems, complementarity problems, fixed point problems, variational inequalities and problems of Nash equilibria; and it has been broadly applied in many areas, such as economics, image reconstruction, transportation,  network, and elasticity. In recent years, EP has been studied extensively,  including the issues regarding existence  of solutions   and iterative algorithms for finding solutions;  see e.g., \cite{Bianchi1996,Blum1994,Combettes2005,Flam1997,Iusem2003b}.

Since the classical existence results in EPs work for the case when $Q$ is a convex set and the bifunction $F$ is convex
in the second variable, 
some authors focused on exploiting the possible existence without the convexity assumption; see e.g.,
 \cite{Homidan2008,Bianchi2005,Castellani2010,Iusem2009}. One useful approach   used  in \cite{Kristaly2010} and \cite{Kristaly2014}
  is to  embed  the underlying nonconvex and/or nonsmooth Nash/Nash-type equilibrium problems into a suitable Riemannain manifold $M$
  to study the existence and the location problems  of the Nash/Nash-type equilibrium points. In particular, Krist\'{a}ly established in  \cite{Kristaly2010},
the existence results for  Nash  equilibrium points associated to strategy sets $\{Q_i\subseteq M_i\}_{i\in I}$ and loss-functions $\{f_i\}_{i\in I}$ ($I:=\{1,2,\ldots,n$\}) under the following assumption:
\begin{enumerate}
  \item[$\rm (A_K$):]  each $Q_i$ is a compact and geodesic convex set of $M_i$ for all $i\in I$
\end{enumerate}
(see item (e) in Definition \ref{convexset} for   the notion of the geodesic convexity).
This class of approaches has also been used extensively in many optimization problems since some nonconvex and/or nonsmooth
problems of the constrained optimization in $\IR^n$ can be reduced to
convex and/or smooth unconstrained optimization problems on appropriate
Riemannian manifolds; see, for examples, {\cite{Ferreira2005,Mahony1996,Miller2005,Smith1994,Udriste1994}.} More about optimization techniques and notions in Riemannian manifolds can be found in {\cite{Absil2008,Adler2002,Burke2001,Greene-Wu,Li2009,LiMWY2011,LiY2012,Wang2010,Yang2007}}
and the bibliographies therein.

 {For the equilibrium problem \eqref{EP} on  a Riemannian manifold $M$,  Colao et. al, by generalizing the KKM lemma to a Hadamard manifold, established   an   existence  result (i.e., \cite[Theorem 3.2]{Colao2012}) for solutions of EP under the following assumptions:}

\begin{enumerate}
  \item[$\rm (A_C$-1):]  $M$ is a Hadamard manifold and $Q$ is closed and convex;
  \item[$\rm (A_C$-2):]  the set $\{y\in Q:F(x,y)\le0\}$ is convex for any $x\in Q$
\end{enumerate}
%
{(which was extensively studied in \cite{Batista2015} for the generalized vector equilibrium problem).
This} existence result was applied there to solve  the   following problems:
\begin{enumerate}
  \item[(P1).] the   existence problem of solutions for mixed variational inequality problems;
  \item[(P2).] the well-definedness of the resolvent and the proximal point algorithm for solving EP;
  \item[(P3).] the existence problem of  fixed points   for set-valued mappings;
  \item[(P4).] the existence problem of solutions for Nash equilibrium problems.
\end{enumerate}
However, the applications to problems  (P1)-(P3) above rely heavily
on the following claim: 
\begin{equation}\label{f}
\mbox{the function $y\mapsto \langle u_{x},\exp_{x}^{-1}y\rangle$  is quasi-convex},
\end{equation}
where $x\in M$ and $u_x\in T_{x}M$;   see the proofs for Theorems 3.5, 3.10 and 4.9 in  \cite{Colao2012}. Unfortunately, unlike in the linear space setting,
claim \eqref{f} is not true in general as pointed out in \cite[Theorem 2.1, p. 299]{Udriste1994} or \cite{KLLN2015}. { Note that, for any $x\in M$, the function defined by \eqref{f} is convex at $x$ (see Definition \ref{definition-convex-f} (i)); this motivates us to introduce the new notion of the point-wise (weak) convexity for a bifunction on general manifolds (see Definition \ref{monotone-bif} (c)).}

Our main purpose in the present  paper is to develop a new approach {(based on the new notion and the work in \cite{LiY2012})} to study  the issue on the existence and structure
of solutions for equilibrium problems on general Riemannian manifolds, which, in  particular,   covers problems (1), (2) and (4) as special cases.
%
In our approach, rather than assumptions  $\rm (A_K)$ or $\rm (A_C$-1)-$\rm (A_C$-2), we make the following ones  on the involved $Q$ and $F$:

$\bullet$ $Q$ is a closed and weakly convex subset of Riemannian manifold $M$;

$\bullet$  $F$ is point-wise weakly convex on $Q$.\\


The technique used  in the present paper for proving the main results  is completely different from the ones used in \cite{Colao2012,Batista2015,Kristaly2010}. Actually, 
our technique here is mainly focused on
establishing  the equivalence between the {\rm EP} and a suitable
variational inequality problem; and then apply the corresponding results in \cite{LiY2012} for the variational inequality problem to study the existence of solutions and the convex structure of the solution set of the {\rm EP} .  

As applications to problems (P1), (P2) and (P4), we obtained  some  results on  the existence of solutions and convexity of the solution sets
 for   mixed variational inequality problems and Nash equilibrium problems (see Theorem 5.1 and 5.2), as well as the convergence of the proximal point algorithm for solving EP. In particular, the existence result for     mixed variational inequality problems and the well-definedness results of the resolvent  for solving EP on Hadamard manifolds  provide correct proofs for the corresponding ones   \cite[Theorem 3.5 and 4.9]{Colao2012} (see the explanations before Corollaries \ref{APA-coro} and 5.2 in Section 4 and 5, respectively); while   the existence result  for the Nash equilibrium on general manifolds relaxes  the geodesic convexity assumption made on  $\{Q_i\}$
   in \cite[Theorem 1.1]{Kristaly2010}  to the weaker one that each  $ Q_i $ is weakly  convex. It is worthwhile to notice that the geodesic convexity assumption for  $\{Q_i\}$ in \cite{Kristaly2010} 
{prevents} its application to  some special but important Riemannian manifolds, such as compact Stiefel manifolds ${\rm St}(p,n)$ and Grassmann manifolds ${\rm Grass}(p,n)$ ($p<n$), in which there is no geodesic convex subset; see Remark 5.1 in Section 5. Moreover,  to our best knowledge,
the convex structure results on  the solution set  for mixed variational inequality problems and Nash equilibrium problems are new even in Hadamard manifold settings.

The paper is organized as follows. In the next section, we introduce some basic notions and notations on Riemannian manifolds, some properties about the (weakly) convex function and the results about the VIP in \cite{LiY2012} which will be used in our approach. In section 3, we establish the existence and the uniqueness result of the solution and the convexity of the solution set of the EP on general Riemannian manifolds. Following these, the formulation of the proximal point algorithm for the equilibrium problem on general Riemnannian manifolds is given and the convergence property about the algorithm is analyzed in section 4. The last section is devoted to  the applications to the Nash equilibrium problem and the mixed variational inequality problem.

\section{Notations and preliminary results}
\label{sec:2}


\subsection{Background of Riemannian manifolds}
The notations used in the present paper are standard;
and the readers are referred to some textbooks for more details, for example,
\cite{Carmo1992,Sakai1996,Udriste1994}.

Let $M$ be a connected $n$-dimensional Riemannian manifold with the Levi-Civita connection   $\nabla$
  on $M$. Let $x\in M$, and  let $T_{x}M$ stand for  the
tangent space at $x$ to $M$ endowed with  the scalar product $\langle,\rangle_{x}$
 and  the associated norm $\|.\|_{x}$, where the subscript $x$ is
sometimes omitted. Thus the tangent bundle,  denoted by $TM$, is defined by
{$$TM:=\{(x,v):x\in M,v\in T_xM\}.$$}
 Fix $y\in{M}$, and let $\gamma:[0,1]\rightarrow M$ be
a piecewise smooth curve joining $x$ to $y$. Then, the arc-length of
$\gamma$ is defined by $l(\gamma):=\int_{0}^{1}\|\dot{\gamma}(t)\|dt$,
while the Riemannian distance from $x$ to $y$ is defined by ${\rm
d}(x,y):=\inf_{\gamma}l(\gamma)$, where the infimum is taken over
all piecewise smooth curves $\gamma:[0,1]\rightarrow M$ joining $x$
to $y$. We use ${\bf B}(x,r)$ and $\overline{{\bf B}(x,r)}$ to
denote, respectively, the open metric ball and the closed metric ball at $x$ with radius
$r$, that is,
$${\bf B}(x,r):=\{y\in M:{\rm d}(x, y)<r\} \quad \mbox{and}\quad \overline{{\bf B}(x,r)}:=\{y\in M:{\rm d}(x, y)\le r\} .$$

A vector field $V$ is said to be parallel along $\gamma$  if
$\nabla_{\dot{\gamma}}V=0$. In particular, for a smooth curve $\gamma$,
if $\dot{\gamma}$ is parallel along itself, then  $\gamma$ is called a
geodesic, that is, a smooth curve $\gamma$ is a geodesic if an only
if $\nabla_{\dot{\gamma}}{\dot{\gamma}}=0$. A geodesic
$\gamma:[0,1]\rightarrow M$ joining $x$ to $y$ is minimal if its
arc-length equals its Riemannian distance between $x$ and $y$. By
the Hopf-Rinow theorem \cite{Carmo1992}, if $M$ is complete, then $(M,{\rm d})$ is a complete
metric space, and there is at least one minimal geodesic joining $x$
to $y$.  One of the important structures on $M$ is the exponential map
$\exp_x:T_xM\rightarrow M$, which is defined at $x\in M$ by
$\exp_xv=\gamma_v(1,x)$ for each $v\in T_xM$, where
$\gamma_v(\cdot,x)$ is the geodesic starting at $x$
with velocity $v$. Then, $\exp_xtv=\gamma_v(t,x)$ for each real
number $t$. Another useful tool is  the parallel transport $P_{\gamma,\cdot,\cdot}$ on the tangent
bundle $TM$ along a geodesic $\gamma$, which is defined by
$$P_{\gamma,\gamma(b),\gamma(a)}(v)=V(\gamma(b))\quad\mbox{for any $a,b\in\IR$ and $v\in T_{\gamma(a)}M$},$$
where $V$ is the unique vector field satisfying $V(\gamma(a))=v$ and $\nabla_{\dot{\gamma}(t)}V=0$ for all $t$.
Then, for any $a,b\in\IR$, $P_{\gamma,\gamma(b),\gamma(a)}$ is an isometry from $T_{\gamma(a)}M$ to $T_{\gamma(b)}M$. We will
write $P_{y,x}$ instead of $P_{\gamma,y,x}$ in the case when $\gamma$ is a minimal geodesic joining $x$ to $y$ and no confusion arises.
 The following lemma can be checked easily.

\begin{lemma}\label{cvg-exp}
Let $x_0\in M$ and $\{x_k\}\subset M$ be such that $\lim_{k\rightarrow\infty}x_k=x_0$. Let $u_0,v_0\in T_{x_0}M$  and let $\{u_k\}$,   $\{v_k\}$ be sequences with each $u_k,v_k\in T_{x_k}M$   such that $u_k\rightarrow u_0$ and $v_k\rightarrow v_0$.
Then
$$
\exp_{x_k}u_k\rightarrow\exp_{x_0}u_0\quad\mbox{and}\quad \langle u_k,v_k\rangle\rightarrow\langle u_0,v_0\rangle.
$$
\end{lemma}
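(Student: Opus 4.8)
The plan is to reduce both assertions to the continuity of two canonical smooth objects attached to the Riemannian structure of $M$: the (total) exponential map and the metric tensor, both regarded as maps on the tangent bundle $TM$. Since the vectors $u_k$ live in the varying tangent spaces $T_{x_k}M$, the hypothesis $u_k\to u_0$ is to be read as convergence in $TM$ (a smooth $2n$-dimensional manifold); concretely, picking a chart $(U,\varphi)$ of $M$ around $x_0$ and writing $u_k=\sum_i u_k^i\,\partial_i|_{x_k}$ for $k$ large, it means $\varphi(x_k)\to\varphi(x_0)$ together with $u_k^i\to u_0^i$ for every $i$, and similarly for $\{v_k\}$. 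Equivalently — and this shortens the proof — for $k$ large (so that $x_k$ lies in a totally normal neighbourhood of $x_0$) one may use the parallel transport formulation $P_{x_0,x_k}u_k\to u_0$ in the fixed vector space $T_{x_0}M$.

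For the inner-product claim, work in the chart $(U,\varphi)$: for $k$ large one has $x_k\in U$ and $\langle u_k,v_k\rangle=\sum_{i,j}g_{ij}(x_k)\,u_k^i v_k^j$, where the $g_{ij}$ are the smooth, hence continuous, components of the metric on $U$; letting $k\to\infty$ this finite sum converges to $\sum_{i,j}g_{ij}(x_0)\,u_0^i v_0^j=\langle u_0,v_0\rangle$. (Using the parallel transport reading it is even more immediate, since parallel transport is an isometry, so $\langle u_k,v_k\rangle=\langle P_{x_0,x_k}u_k,P_{x_0,x_k}v_k\rangle_{x_0}$, and $\langle\cdot,\cdot\rangle_{x_0}$ is a continuous fixed bilinear form on $T_{x_0}M\times T_{x_0}M$.) For the exponential-map claim, recall the standard fact (see e.g.\ \cite{Carmo1992,Sakai1996}) that the total exponential map $(x,v)\mapsto\exp_xv$ is smooth on its domain $\mathcal{D}$, which is an open subset of $TM$; this is a consequence of the smooth dependence on initial data of solutions of the geodesic equation $\ddot\gamma^{\,\ell}+\Gamma^\ell_{ij}(\gamma)\dot\gamma^{\,i}\dot\gamma^{\,j}=0$. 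Since $\mathcal{D}$ is open and contains $(x_0,u_0)$, it contains $(x_k,u_k)$ for all large $k$, and continuity of $\exp$ on $\mathcal{D}$ yields $\exp_{x_k}u_k=\exp(x_k,u_k)\to\exp(x_0,u_0)=\exp_{x_0}u_0$.

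The only point that is not a pure triviality — and the reason a short argument is still required — is the exponential-map part: one must appeal to the openness of the domain of $\exp$ inside $TM$ together with the smooth dependence of geodesics on their initial conditions; for a more hands-on version one would instead have to check that, for $(x,v)$ close to $(x_0,u_0)$, the geodesic $t\mapsto\exp_x(tv)$ on $[0,1]$ remains inside a fixed compact coordinate neighbourhood, so that the ODE estimates apply uniformly in the parameters. Everything else is simply the continuity of smooth tensor fields evaluated along convergent sequences in $TM$.
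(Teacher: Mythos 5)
Your argument is correct and is precisely the standard verification the paper has in mind when it states that this lemma ``can be checked easily'' and omits the proof: convergence of $(x_k,u_k)$ in $TM$, continuity of the metric components $g_{ij}$ (or, equivalently, the isometry property of parallel transport) for the inner-product claim, and smoothness of the total exponential map on its open domain in $TM$ for the other. No gaps; the one point you rightly flag as needing care --- that $(x_k,u_k)$ eventually lies in the open domain of $\exp$ containing $(x_0,u_0)$ --- is handled correctly.
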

%
%

The following result is known  in any textbook about Riemannain geometry, see e.g., \cite[Corollary, p. 73]{Carmo1992} or \cite[Exercise 5, p. 39]{Sakai1996}.

\begin{lemma}\label{minimal-geodesic} Let $\gamma:[a,b]\rightarrow M$ be
  a piecewise differentiable curve. If $l(\gamma)={\rm d}(\gamma(a),\gamma(b))$, then $\gamma$ is a geodesic joining  $\gamma(a)$ and $\gamma(b)$.
\end{lemma}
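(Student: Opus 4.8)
The plan is to reduce this global statement to a local fact about minimizing curves inside a normal neighbourhood. First I would reparametrize $\gamma$ by arc length; this changes neither the image nor the length, so after renaming we may assume $\gamma:[0,L]\to M$ has unit speed, where $L=l(\gamma)={\rm d}(\gamma(a),\gamma(b))$, and it then suffices to show that this $\gamma$ satisfies $\nabla_{\dot\gamma}\dot\gamma=0$. The first observation is that $\gamma$ is minimizing on \emph{every} subinterval: for $0\le s<t\le L$ the triangle inequality together with $l(\gamma|_{[0,s]})=s$ and $l(\gamma|_{[t,L]})=L-t$ gives
\[
L={\rm d}(\gamma(0),\gamma(L))\le {\rm d}(\gamma(0),\gamma(s))+{\rm d}(\gamma(s),\gamma(t))+{\rm d}(\gamma(t),\gamma(L))\le s+{\rm d}(\gamma(s),\gamma(t))+(L-t),
\]
whence ${\rm d}(\gamma(s),\gamma(t))\ge t-s=l(\gamma|_{[s,t]})$; the reverse inequality is trivial, so $l(\gamma|_{[s,t]})={\rm d}(\gamma(s),\gamma(t))$.

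Next I would localize. Fix $t_0\in[0,L]$ and choose, by the existence of totally normal neighbourhoods, a neighbourhood $W$ of $\gamma(t_0)$ and $\delta>0$ such that $\exp_p$ is a diffeomorphism of the $\delta$-ball in $T_pM$ onto a normal ball containing $W$ for every $p\in W$. Pick $\varepsilon>0$ with $2\varepsilon<\delta$ and $\gamma\big((t_0-\varepsilon,t_0+\varepsilon)\cap[0,L]\big)\subseteq W$. For $s<t$ in that interval, $l(\gamma|_{[s,t]})=t-s<\delta$; since every point $\gamma(u)$ on this arc satisfies ${\rm d}(\gamma(s),\gamma(u))\le l(\gamma|_{[s,u]})<\delta$, the arc stays inside the normal ball centred at $\gamma(s)$, and its length equals ${\rm d}(\gamma(s),\gamma(t))$. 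By the Gauss-lemma characterization of radial geodesics as the \emph{unique} shortest paths to the centre of a normal ball (up to monotone reparametrization), $\gamma|_{[s,t]}$ must coincide with the radial geodesic from $\gamma(s)$ to $\gamma(t)$. Hence $\gamma$ is a geodesic on $(t_0-\varepsilon,t_0+\varepsilon)\cap[0,L]$; in particular it is smooth there and $\nabla_{\dot\gamma}\dot\gamma=0$. As $t_0$ was arbitrary, $\gamma$ is a smooth geodesic on all of $[0,L]$ joining $\gamma(a)$ to $\gamma(b)$.

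The step I expect to be the main obstacle — and which carries the essential content — is this local uniqueness: inside a normal ball the radial geodesic is the \emph{only} curve realizing the distance to the centre. Its proof rests on the Gauss lemma and the decomposition of a competing curve into radial and ``spherical'' parts (see e.g.\ \cite{Carmo1992}), and it is precisely what rules out corners of the original $\gamma$: at a partition point $t_0$ the restriction of $\gamma$ to a small two-sided interval about $t_0$ is still minimizing, hence equals a smooth radial geodesic, so no corner can occur there. An alternative is a first-variation argument showing that a minimizer is a critical point of the energy functional, hence solves the geodesic equation; but that still requires a separate corner-smoothing step to deal with the piecewise-differentiable hypothesis, so the normal-neighbourhood route above is the more economical one.
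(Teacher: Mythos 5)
Your argument is correct and is precisely the standard textbook proof of this fact --- the paper itself gives no proof, citing \cite[Corollary, p. 73]{Carmo1992}, and the argument behind that corollary (minimality on every subinterval, localization to totally normal neighbourhoods, and uniqueness of radial geodesics in a normal ball via the Gauss lemma) is exactly what you reconstruct. The only caveat is that the conclusion $\nabla_{\dot\gamma}\dot\gamma=0$ requires the curve to be parametrized proportionally to arc length, which your initial reparametrization step supplies and which holds in the paper's applications of the lemma.
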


Consider a set $Q\subseteq M$ and $x,\,y\in Q$. The set of
all geodesics $\gamma:[0,1]\rightarrow M$ with $\gamma(0)=x$ and
$\gamma(1)=y$ satisfying $\gamma([0,1])\subseteq Q$ is denoted by   $\Gamma^Q_{xy}$, that is
$$\Gamma^Q_{xy}:=\{\gamma:[0,1]\rightarrow Q:\;\gamma(0)=x,\, \gamma(1)=y\mbox{ and } \nabla_{\dot{\gamma}}\dot{\gamma}=0\}.
$$
In particular, we write  $\Gamma_{xy}$ for $\Gamma^M_{xy}$, and  $\Gamma_{xy}$ is nonempty for all $x,y\in M$ provided that $M$ is complete. Furthermore,
 for a subset $\Gamma_0\subseteq \Gamma_{xy}$, we use ${\rm min-}\Gamma_0$ to denote the subset of $\Gamma_0$ consisting of all minimal geodesics in $\Gamma_0$. Thus       $\gamma_{xy}\in {\rm min-}\Gamma_0$  means that $\gamma_{xy}\in \Gamma_0$ and $\gamma_{xy}$ is minimal.

Recall that a   Hadamard manifold  is a complete simply connected $m$-dimensional
Riemannian manifold with nonpositive sectional curvatures. In
a Hadamard manifold, the geodesic between any two points is unique and the exponential
map at each point of $M$ is a global diffeomorphism; see, e.g., \cite[Theorem 4.1, p. 221]{Sakai1996}.
Thus ${\rm min-}\Gamma_{xy}$ coincides with $ \Gamma_{xy}$ in a  Hadamard manifold for any $ x,y\in M$.

\subsection{Convex analysis on Riemmanian manifolds}


Definition \ref{convexset} below presents the notions of the convexity for subsets in $M$, where item (e) is known in  \cite{Kristaly2010}, and see e.g., \cite{LiLi2009,Wang2010} for  the others. As usual, we use $\overline{C}$ to stand for the closure of a subset $C\subseteq M$.

\begin{definition}\label{convexset} Let $Q\subseteq M$ be
a  nonempty set. The set $Q$ is said to be

{\rm (a)} weakly convex if, for any $x,y\in Q$, there is a minimal
geodesic of $M$ joining $x$ to $y$ and it is in $Q$;

{\rm (b)} strongly convex if, for any $x,y\in Q$,  the
minimal geodesic  in  $M$ joining $x$ to $y$ is unique and lies in $Q$;

{\rm (c)} locally convex if, for any $x\in \overline{Q}$, there is a positive $\varepsilon>0$ such that $Q\cap{\bf B}(x,\varepsilon)$ is strongly convex;

{\rm (d)} $r$-convex if, for any $x,y\in Q$ with ${\rm d}(x,y)\le r$, the
minimal geodesic  in  $M$ joining $x$ to $y$ is unique and lies in $Q$;

{\rm (e)} geodesic convex if,  for any $x,y\in Q$, the
geodesic in  $M$ joining $x$ to $y$ is unique and lies in $Q$.
\end{definition}

\begin{remark} {\rm (a)} The following implications are obvious: 
{$$
\mbox{geodesic convexity $\Rightarrow$} \mbox{strong convexity $\Rightarrow$} \mbox{ $r$-convexity/weak convexity }
\mbox{$\Rightarrow$ local convexity.}
$$}

{\rm (b)} The  intersection of a weakly convex set and a strongly convex set is strongly convex.

{\rm (c)}  All   convexities (except the local convexity)
in a Hadamard manifold coincide and are simply called the convexity.
\end{remark}

Recall that the convexity radius at $x$ is defined by
\begin{equation}\label{cvx-rd-p}
r_x:=\sup\left\{r>0:\begin{array}{ll}
&\mbox{each ball in ${\bf B}(x,r)$ is strongly convex}\\
&\mbox{and each geodesic in ${\bf B}(x,r)$ is minimal} \end{array}\right\}.
\end{equation}
Then  $r_x$ is well defined and positive, and $r_x=+\infty$ for each $x\in M$ in the case when  $M$ is a Hadamard manifold. Moreover, for any compact subset $Q\subseteq M$, we have that
\begin{equation}\label{cvx-rd}
r_Q:=\inf \{r_x:{x\in Q}\}>0;
\end{equation}
see \cite[Theorem 5.3, p. 169]{Sakai1996} or \cite[Lemma 3.1]{LiLi2009}.


Consider now an extended real-valued function $f:M\rightarrow\overline{\IR}:=(-\infty,\infty]$ and let $\mathcal{D}(f)$ denote its domain, that is, $\mathcal{D}(f):=\{x\in M:f(x)\neq\infty\}$. Write  $\Gamma^f_{xy}:=\Gamma^{\mathcal{D}(f)}_{xy}$ for simplicity, that is  $\Gamma^f_{xy}$
 stands for the subset   consisting of all $\gamma_{xy}\in \Gamma_{xy}$ such that $\gamma([0,1]])\subseteq \mathcal{D}(f)$.  In the
following definition, we introduce the notions of the convexity for functions,
where item (c) is known in \cite{LiMWY2011,LiY2012}.

\begin{definition}\label{definition-convex-f}
Let $f:M\rightarrow\overline{\IR}$ be a proper function with a weakly convex domain $\mathcal{D}(f)$,
and let $x\in \mathcal{D}(f)$. Then, $f$ is said to be

{\rm (a)} convex (resp. strictly convex) at $x$ if,
for any $y\in   \mathcal{D}(f)\setminus\{x\}$ and any geodesic $\gamma_{xy}\in
\Gamma^f_{xy}$ the composition $f\circ\gamma_{xy}:[0,1]\rightarrow\IR$ is convex (resp. strictly convex) on $(0,1)$:
\begin{equation}\label{D-convex}
f\circ\gamma_{xy}(t)\le(resp.\;<)(1-t)f(x)+tf(y)\;\;\mbox{ {\rm for all}
$t\in(0,1)$};
\end{equation}

{\rm (b)} weakly convex (resp. weakly strictly convex) at $x$ if, for any $y\in \mathcal{D}(f)$ there exists   $\gamma_{xy}\in {\rm min-}\Gamma^f_{xy}$ such that \eqref{D-convex} holds;

{\rm (c)} 
%
weakly convex (resp. convex, strictly convex, weakly strictly convex) if so is it  at each  $x\in \mathcal{D}(f)$.
\end{definition}

Clearly, for a proper function  $f$ on $M$, 
the convexity   implies the weak convexity, and
 the  strict convexity implies the  convexity.



Let $f:M\rightarrow\overline{\IR}$ be   proper and weakly convex at $x\in \mathcal{D}(f)$. The directional derivative in direction $u\in T_xM$ and the subdifferential of $f$ at $x$ are, respectively, defined by
$$f'(x;u):=\lim_{t\rightarrow0^+}\frac{f(\exp_xtu)-f(x)}{t}$$
and
$$\partial f(x):=\{v\in T_{x}M:\; \langle v,u\rangle\le f'(x;u)\;\;\mbox{for any } u\in T_xM\}.$$
Then, by \cite[Proposition 3.8(iii)]{LiMWY2011},  the following  relationship holds between $\partial f(x)$ and ${\rm cl}f'(x;\cdot)$,
 the lower semi-continuous hull of   $f'(x;\cdot)$:
\begin{equation}\label{r-s-l}
{\rm cl}f'(x;u)=\sup\{\langle u,v\rangle:\; v\in \partial f(x)  \}\quad\mbox{for any }u\in T_xM.
\end{equation}

\begin{lemma}\label{subdifferential representation} Let $f:M\rightarrow\overline{\IR}$ be  proper with a weakly convex domain $\mathcal{D}(f)$. Let  $x\in \mathcal{D}(f)$ and   $v\in T_xM$.

{\rm (i)} If $f$ is weakly convex (resp.  weakly strictly convex) at $x$, then  $v\in\partial f(x)$ if and only if,   for  some   or any constant $r>0$, and for any $y\in\mathcal{D}(f)\cap{\bf B}(x,r)$, there exists a geodesic $\gamma_{xy}\in{\rm min-}\Gamma^f_{xy}$ such that
\begin{equation}\label{subd-ine}
f(y)\ge(resp. >) f(x)+\langle v,\dot{\gamma}_{xy}(0)\rangle.
\end{equation}

{\rm (ii)} If $f$ is  convex (resp.   strictly convex) at $x$, then  $v\in\partial f(x)$ if and only if,   for  some   or any constant $r>0$, 
the inequality \eqref{subd-ine} holds  for any $y\in\mathcal{D}(f)\cap{\bf B}(x,r)$ and any $\gamma_{xy}\in{\rm min-}\Gamma^f_{xy}$.
\end{lemma}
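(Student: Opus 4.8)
The plan is to prove Lemma~\ref{subdifferential representation} by reducing the manifold statement to the one-dimensional convexity of $f\circ\gamma_{xy}$, and then unwinding the definitions of $f'(x;\cdot)$ and $\partial f(x)$. I would first record the elementary fact that if $\varphi:[0,1]\to\IR$ is convex (resp.\ strictly convex) on $(0,1)$ with $\varphi(0)=f(x)$ and $\varphi(1)=f(y)$, then the difference quotient $t\mapsto(\varphi(t)-\varphi(0))/t$ is nondecreasing (resp.\ increasing) on $(0,1)$, so that its limit as $t\to0^+$ exists in $[-\infty,+\infty)$ and is dominated by $\varphi(1)-\varphi(0)=f(y)-f(x)$; if $\gamma_{xy}$ is minimal then $\|\dot\gamma_{xy}(0)\|={\rm d}(x,y)$, and taking $\varphi=f\circ\gamma_{xy}$ gives $f'(x;\dot\gamma_{xy}(0))\ge$ (in fact $\le$, after dividing by the parameter scaling) the relevant bound. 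The precise bookkeeping is that $f'(x;\dot\gamma_{xy}(0))=\lim_{t\to0^+}(f(\gamma_{xy}(t))-f(x))/t\le f(y)-f(x)$ in the convex case, with strict inequality in the strictly convex case once one is careful that a strictly convex $\varphi$ still only yields $\varphi'(0^+)<\varphi(1)-\varphi(0)$ (which does hold: pick any $s\in(0,1)$, then $\varphi'(0^+)\le(\varphi(s)-\varphi(0))/s<\varphi(1)-\varphi(0)$ by strict convexity).

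For the ``only if'' direction of part~(i): assume $v\in\partial f(x)$ and fix any $r>0$ (the statement ``for some or any $r$'' is handled by noting the conclusion for a large $r$ trivially implies it for a smaller one, and conversely any $y\in\mathcal D(f)$ can be reached by first running along a minimal geodesic a short distance, but in fact the cleanest route is to observe the inequality is purely local-at-$x$ in its hypothesis and global in $y$, so I would just prove it for arbitrary $y\in\mathcal D(f)$ and all $r$). Since $f$ is weakly convex at $x$, choose $\gamma_{xy}\in{\rm min\text{-}}\Gamma^f_{xy}$ with $f\circ\gamma_{xy}$ convex on $(0,1)$; set $u=\dot\gamma_{xy}(0)$. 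By definition of $\partial f(x)$ we have $\langle v,u\rangle\le f'(x;u)$, and by the one-dimensional step above $f'(x;u)\le f(y)-f(x)$, hence $f(y)\ge f(x)+\langle v,u\rangle$, which is \eqref{subd-ine}. For the weakly strictly convex case the same chain gives the strict inequality via the strict one-dimensional bound. For the ``if'' direction: suppose \eqref{subd-ine} holds for every $y\in\mathcal D(f)\cap{\bf B}(x,r)$ along some minimal geodesic. To show $v\in\partial f(x)$ I must verify $\langle v,u\rangle\le f'(x;u)$ for every $u\in T_xM$. Given $u$, set $y_t=\exp_x(tu)$ for small $t>0$; for $t$ small enough $y_t\in\mathcal D(f)\cap{\bf B}(x,r)$ (using that $\mathcal D(f)$ is weakly convex and $x\in\mathcal D(f)$, so a whole short geodesic segment from $x$ in direction $u$ lies in $\mathcal D(f)$ — this is the one point needing a small argument, see below), and the radial geodesic $s\mapsto\exp_x(su)$, $s\in[0,t]$, is the (unique, for $t<r_x$) minimal geodesic from $x$ to $y_t$ with initial velocity $tu$ after reparametrization, i.e.\ $\dot\gamma_{x y_t}(0)=tu$. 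Then \eqref{subd-ine} reads $f(\exp_x(tu))\ge f(x)+t\langle v,u\rangle$, so $(f(\exp_x(tu))-f(x))/t\ge\langle v,u\rangle$; letting $t\to0^+$ gives $f'(x;u)\ge\langle v,u\rangle$, as required. Part~(ii) is proved by exactly the same computation, except that convexity (resp.\ strict convexity) at $x$ makes \emph{every} $\gamma_{xy}\in{\rm min\text{-}}\Gamma^f_{xy}$ admissible, so one quantifies over all such geodesics throughout; the ``if'' direction is unchanged since it only ever uses the single radial geodesic.

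The main obstacle I anticipate is the ``for some or any constant $r>0$'' clause and the localization it implicitly requires. The subtlety is that a short segment of the minimal geodesic from $x$ to a far point $y$ need not be the minimal geodesic from $x$ to an intermediate point in $\mathcal D(f)$ unless we know geodesic segments of minimal geodesics are minimal (true for length, but uniqueness within $\mathcal D(f)$ is what matters for part~(ii)); and to get from ``inequality holds for $y$ near $x$'' to ``$v\in\partial f(x)$'' I need that for every direction $u$ the point $\exp_x(tu)$ lands in $\mathcal D(f)$ for all small $t$. The latter follows from weak convexity of $\mathcal D(f)$ together with the fact that for $t<r_x$ the geodesic $s\mapsto\exp_x(su)$ is the unique minimal geodesic joining its endpoints and hence, if both endpoints are in $\mathcal D(f)$, lies in $\mathcal D(f)$ by weak convexity — but I actually only know $x\in\mathcal D(f)$, not $\exp_x(tu)\in\mathcal D(f)$. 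The resolution is that $f'(x;u)=+\infty$ whenever $\exp_x(tu)\notin\mathcal D(f)$ for all small $t>0$ (so the inequality $\langle v,u\rangle\le f'(x;u)$ is automatic for such $u$), and when $\exp_x(t_0u)\in\mathcal D(f)$ for some small $t_0$, weak convexity of $\mathcal D(f)$ forces the whole segment $\exp_x([0,t_0]u)\subseteq\mathcal D(f)$ once we check the radial geodesic is the minimal one — which holds for $t_0<r_x$ by the definition \eqref{cvx-rd-p} of the convexity radius. I would therefore open the proof by fixing $r\le r_x$ without loss of generality (for the ``if'' direction; the ``only if'' direction poses no such issue since it produces, not consumes, the geodesic), and flag that the equivalence ``for some $r$'' $\iff$ ``for any $r$'' follows because shrinking $r$ only weakens the hypothesis in ``if'' / conclusion in ``only if'', while the values $f'(x;u)$, $\partial f(x)$, and the validity of \eqref{subd-ine} for a fixed nearby $y$ do not depend on $r$ at all.
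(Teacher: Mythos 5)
Your proposal is correct and follows essentially the same route as the paper's proof: the ``only if'' direction via the chord estimate $f'(x;\dot\gamma_{xy}(0))\le f(y)-f(x)$ for a geodesic along which $f$ is convex, and the ``if'' direction by testing against radial geodesics $\exp_x(su)$ with $r\le r_x$, noting the inequality is automatic when $\exp_x(su)\notin\mathcal{D}(f)$. The paper merely packages the ``some $r$ / any $r$'' bookkeeping as a cyclic implication among three statements, which is logically the same as your treatment.
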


\begin{proof} We only prove assertion (i) (as the proof for assertion (ii) is similar). To do this,
suppose that $f$ is weakly convex (resp.  weakly strictly convex) at $x$. It suffices to verify that  the following statements are equivalent:

\begin{enumerate}[{\rm (a)}]
  \item $v\in\partial f(x)$.
  \item For any $r>0$ and any $y\in\mathcal{D}(f)\cap{\bf B}(x,r)$, there exists a geodesic $\gamma_{xy}\in{\rm min-}\Gamma^f_{xy}$ such that  \eqref{subd-ine} holds.

  \item  There is some $r>0$ such that for any $y\in\mathcal{D}(f)\cap{\bf B}(x,r)$, there exists a geodesic $\gamma_{xy}\in{\rm min-}\Gamma^f_{xy}$ such that \eqref{subd-ine} holds.
\end{enumerate}
  We shall complete the proof by showing the implications
(a) $\Rightarrow$ (b) $\Rightarrow$ (c) $\Rightarrow$ (a).
To do this, assume (a). Then   $v\in\partial f(x)$, and by definition, we have that
\begin{equation}\label{sub-represen1}
\langle v,u\rangle\le f'(x;u)\quad\mbox{for any } u\in T_xM.
\end{equation}
Let $r>0$ and $y\in\mathcal{D}(f)\cap{\bf B}(x,r)$ be arbitrary. Noting that $f$ is weak convex (resp.  weakly strictly convex)  at $x$,  there exists   $\gamma_{xy}\in{\rm min-}\Gamma_{xy}^f$ such that the composite $f\circ\gamma_{xy}:[0,1]\rightarrow\IR$ is convex (resp. strictly convex) on $[0,1]$.
Therefore,
$$
f'(x;\dot{\gamma}_{xy}(0))=\inf_{t>0}\frac{f(\exp_xt\dot{\gamma}_{xy}(0))-f(x)}{t}\le(resp. <) f(y)-f(x).
$$
This, together with \eqref{sub-represen1},  yields that
$$\langle v,\dot{\gamma}_{xy}(0)\rangle\le f'(x;\dot{\gamma}_{xy}(0))\le(resp. <) f(y)-f(x).$$
Hence (b) holds, and the implication (a) $\Rightarrow$ (b) is checked.
Noting  that the implication (b) $\Rightarrow$ (c)  is evident,  it remains to  show the implication (c) $\Rightarrow$ (a). To this end,
assume (c). Then  one can  choose   $r>0$  and  $\gamma_{xy}\in{\rm min-}\Gamma^f_{xy}$ for any $y\in\mathcal{D}(f)\cap{\bf B}(x,r)$ such that \eqref{subd-ine} holds. Without loss of generality, one could assume that $r\le r_x$.
Let $u\in T_xM\setminus\{0\}$ be arbitrary, and set  $s_0:= \frac{ r }{\|u\|}$. Then,
 for any $s\in(0,s_0)$, $y(s):=\exp_x(su)\in{\bf B}(x,r)\subseteq {\bf B}(x,r_x)$. It follows that  the geodesic $\gamma_{xy(s)}$  joining $x$ and $y(s)$ is
 unique. Therefore, $\gamma_{xy(s)}$ is of the form:
$$\gamma_{xy(s)}(t)=\exp_x( t(su))\quad\mbox{for each }t\in [0,1],$$
and if $y(s)\in\mathcal{D}(f)$, then \eqref{subd-ine} holds with $y(s)$ in place of $y$:
$$\langle v,su\rangle=\langle v,\dot\gamma_{xy(s)}(0)\rangle\le  f(y(s))-f(x)= f(\exp_x(su))-f(x)\quad\mbox{for any }s\in(0,s_0).$$
Note that the above inequality holds trivially if $y(s)\notin\mathcal{D}(f)$. Then, by definition, we get that
$\langle v,u\rangle\le f'(x;u)$, and so  (a) holds as $u\in T_xM$ is arbitrary. Thus, the implication (c) $\Rightarrow$ (a) is shown and the proof is complete.
\end{proof}

%

Fix $\bar x\in {\cal D}(f)$ and recall that $f$ is center Lipschitz continuous at $\bar x$
if there exits a neighborhood $U$ of $\bar x$ and  a constant $L$ such that
$$|f(x)-f(\bar x)|\le L{\rm d}(x,\bar x)\quad\mbox{for any } x\in U.$$
The  center  Lipschitz constant $L^f_{\bar x}$ at $\bar x$ is defined to be the minimum of all $L$ such that above inequality holds for some neighborhood $U$ of $\bar x$. Then it is clear that
$$
 L^f_{\bar x}=\lim_{\delta\to 0+}\sup\left\{\frac{|f(x)-f(\bar x)|}{{\rm d}(x,\bar x)}:\;  0<{\rm d}(x,\bar x)\le \delta\right\}.
$$

The following properties about the subdifferential of a (weakly) convex function can be found in \cite[Proposition 6.2]{LiY2012} except \eqref{bound-subd}
by definition.

{
\begin{lemma}\label{Convex-f-P}
Let $f:M\rightarrow\overline{\IR}$ be a proper function.
Then the following assertions hold:

{\rm (i)} If is weakly convex , then $f$ is continuous on ${\rm int}\mathcal{D}(f)$.

{\rm (ii)} If  $\bar x\in {\rm int}\mathcal{D}(f)$ 
%
and   $f$ is weakly convex  at $\bar x$, then $\partial f(\bar x)$ is a nonempty, compact and convex set satisfying
\begin{equation}\label{bound-subd}
\|v\|\le L_{\bar x}^f\quad\mbox{for any }v\in \partial f(\bar x).
\end{equation}

\end{lemma}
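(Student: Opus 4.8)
The strategy is to read off assertion (i), together with the nonemptiness, compactness and convexity of $\partial f(\bar x)$ in assertion (ii), directly from \cite[Proposition 6.2]{LiY2012}; what then remains is only to verify the norm estimate \eqref{bound-subd}, and this I would establish straight from the definitions of the center Lipschitz constant $L^f_{\bar x}$, of the directional derivative $f'(\bar x;\cdot)$ and of the subdifferential $\partial f(\bar x)$, as the phrase ``by definition'' in the statement suggests.

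In detail, fix $v\in\partial f(\bar x)$; if $L^f_{\bar x}=+\infty$ the inequality is vacuous, so assume $L^f_{\bar x}<+\infty$. Let $u\in T_{\bar x}M\setminus\{0\}$. Since $\bar x\in{\rm int}\,\mathcal{D}(f)$ and $\exp_{\bar x}$ is a local diffeomorphism near $0$, the point $\exp_{\bar x}(tu)$ lies in $\mathcal{D}(f)$ and differs from $\bar x$ for all small $t>0$, while the geodesic segment $s\mapsto\exp_{\bar x}(su)$, $s\in[0,t]$, has length $t\|u\|$ and hence ${\rm d}(\exp_{\bar x}(tu),\bar x)\le t\|u\|$. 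Consequently, for all small $t>0$,
\begin{equation*}
\frac{f(\exp_{\bar x}(tu))-f(\bar x)}{t}\;\le\;\frac{|f(\exp_{\bar x}(tu))-f(\bar x)|}{{\rm d}(\exp_{\bar x}(tu),\bar x)}\,\|u\|,
\end{equation*}
and letting $t\to0^+$ the left-hand side tends to $f'(\bar x;u)$, while, because ${\rm d}(\exp_{\bar x}(tu),\bar x)\le t\|u\|\to0$, the limsup of the quotient on the right is at most $L^f_{\bar x}$ by the limsup formula for $L^f_{\bar x}$ recalled before the lemma; thus $f'(\bar x;u)\le L^f_{\bar x}\|u\|$. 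Since $v\in\partial f(\bar x)$ gives $\langle v,u\rangle\le f'(\bar x;u)$, we obtain $\langle v,u\rangle\le L^f_{\bar x}\|u\|$ for every $u\in T_{\bar x}M$ (the case $u=0$ being trivial), and choosing $u=v$ yields $\|v\|^2\le L^f_{\bar x}\|v\|$, that is, \eqref{bound-subd}.

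I do not foresee any real obstacle: this is precisely the elementary ``by definition'' computation announced in the statement, and the only points that even need a word are the harmless facts that $\exp_{\bar x}(tu)$ stays in $\mathcal{D}(f)$ (and in the neighbourhood of $\bar x$ used to define $L^f_{\bar x}$) for small $t$ — immediate from $\bar x\in{\rm int}\,\mathcal{D}(f)$ and continuity of $\exp_{\bar x}$ — and the observation that \eqref{bound-subd} carries content only when $f$ is center Lipschitz continuous at $\bar x$, i.e. when $L^f_{\bar x}<+\infty$ (which does in fact hold here, since a weakly convex function is locally Lipschitz on ${\rm int}\,\mathcal{D}(f)$, but this is not needed for the estimate).
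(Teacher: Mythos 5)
Your proposal is correct and follows essentially the same route as the paper, which likewise cites \cite[Proposition 6.2]{LiY2012} for assertion (i) and for the nonemptiness, compactness and convexity of $\partial f(\bar x)$, and disposes of \eqref{bound-subd} ``by definition.'' Your explicit computation $\langle v,u\rangle\le f'(\bar x;u)\le L^f_{\bar x}\|u\|$ followed by the choice $u=v$ is exactly the intended elementary argument and is carried out correctly.
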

}

%
The following lemma, which provides some sufficient conditions ensuring the sum rule of subdifferential, was proved in  \cite[Proposition 4.3]{LiMWY2011}.

\begin{lemma}\label{Sub-Sum} Let
  $f,g:M\rightarrow\overline{\IR}$ be proper functions such that  $f,g$ and $f+g$ are weakly convex
    at $x\in {\rm int}\mathcal{D}(f)\cap\mathcal{D}(g)$. Then the following sum rule for the subdifferential holds:
$$\partial(f+g)(x)=\partial f(x)+\partial g(x).$$
\end{lemma}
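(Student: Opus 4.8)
The plan is to prove the two inclusions $\partial f(x)+\partial g(x)\subseteq\partial(f+g)(x)$ and $\partial(f+g)(x)\subseteq\partial f(x)+\partial g(x)$ separately, the first being routine and the second carrying all the difficulty. For the easy inclusion, I would take $v\in\partial f(x)$ and $w\in\partial g(x)$; since $f$ is weakly convex at $x$, for any $y\in\mathcal D(f)\cap\mathcal D(g)$ near $x$ Lemma \ref{subdifferential representation}(i) gives a minimal geodesic $\gamma_{xy}$ with $f(y)\ge f(x)+\langle v,\dot\gamma_{xy}(0)\rangle$. The subtlety is that the geodesic supplied for $f$ need not be the one for $g$; to handle this I would instead use the directional-derivative characterization $\langle v,u\rangle\le f'(x;u)$ and $\langle w,u\rangle\le g'(x;u)$ for all $u\in T_xM$, add them, and use $f'(x;u)+g'(x;u)=(f+g)'(x;u)$ (immediate from the definition of the one-sided derivative, valid since both limits exist as $f,g$ are weakly convex at $x$, hence finite-valued and locally Lipschitz near $x\in{\rm int}\,\mathcal D(f)$ by Lemma \ref{Convex-f-P}(i)). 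This yields $\langle v+w,u\rangle\le(f+g)'(x;u)$, i.e. $v+w\in\partial(f+g)(x)$.

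For the hard inclusion, let $v\in\partial(f+g)(x)$. The natural route is a Hahn–Banach/separation argument carried out in the tangent space $T_xM\cong\IR^n$, where everything is linear. Consider in $T_xM\times\IR$ the two convex sets
$$A:=\{(u,s): u\in T_xM,\ s\ge f'(x;u)\},\qquad B:=\{(u,s): u\in T_xM,\ s\le \langle v,u\rangle - {\rm cl}\,g'(x;u)\},$$
(using ${\rm cl}$ of the one-homogeneous, sublinear hulls so that the sets are genuinely convex and closed where needed). One shows $\mathrm{int}\,A\cap B=\emptyset$ using $v\in\partial(f+g)(x)$ together with ${\rm cl}f'+{\rm cl}g'=(f+g)'$ on a dense set; then a separating hyperplane produces $w\in T_xM$ and a constant realizing $\langle w,u\rangle\le {\rm cl}f'(x;u)$ and $\langle v-w,u\rangle\le {\rm cl}g'(x;u)$ for all $u$, i.e. $w\in\partial f(x)$ and $v-w\in\partial g(x)$ by \eqref{r-s-l}. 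The nonemptiness and compactness of $\partial f(x),\partial g(x)$ from Lemma \ref{Convex-f-P}(ii) (which needs $x\in{\rm int}\,\mathcal D(f)\cap\mathcal D(g)$, part of the hypothesis) guarantees the hulls are finite-valued proper sublinear functions, so the separation is not degenerate.

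Since the paper attributes this to \cite[Proposition 4.3]{LiMWY2011} and the whole point of the Riemannian setting here is that subdifferentials live in a single tangent space, the cleanest write-up is simply: \emph{transport the problem to $T_xM$.} That is, define $\tilde f:=f\circ\exp_x$ and $\tilde g:=g\circ\exp_x$ on a small ball $V\subset T_xM$ around $0$; by the definitions of directional derivative and subdifferential, $\partial f(x)=\partial\tilde f(0)$ and $\partial g(x)=\partial\tilde g(0)$ in the ordinary convex-analysis sense on the open set $V\subseteq\IR^n$ (one must check $\tilde f,\tilde g$ inherit local Lipschitz continuity and that $f'(x;u)$ agrees with the one-sided derivative of $\tilde f$ at $0$ in direction $u$, which holds because geodesics through $x$ are exactly the rays $t\mapsto\exp_x(tu)$ near $0$). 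Then the classical Moreau–Rockafellar sum rule on $\IR^n$ — applicable because $0\in\mathrm{int}\,\mathcal D(\tilde f)\cap\mathrm{int}\,\mathcal D(\tilde g)$ and both functions are convex near $0$ (here "convex near $0$" should be read in the weak-convexity sense of Definition \ref{definition-convex-f}; one verifies the one-sided derivative $f'(x;\cdot)$ is sublinear, which is all Moreau–Rockafellar actually uses) — gives $\partial(\tilde f+\tilde g)(0)=\partial\tilde f(0)+\partial\tilde g(0)$, and transporting back finishes the proof.

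The main obstacle I anticipate is the compatibility between weak convexity and the directional-derivative calculus: weak convexity only asserts the convexity inequality along \emph{one} minimal geodesic per pair of points, so I must be careful that $f'(x;u)=\inf_{t>0}t^{-1}(f(\exp_x tu)-f(x))$ still holds (this uses that for $y$ in a convexity ball the minimal geodesic is unique and equals $t\mapsto\exp_x(tu)$, exactly as exploited in the proof of Lemma \ref{subdifferential representation}), and that $f'(x;\cdot)$ is sublinear so that the separation/Moreau–Rockafellar machinery applies. Once that local-uniqueness reduction is in place, the rest is the standard finite-dimensional sum rule.
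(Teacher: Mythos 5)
First, a point of comparison: the paper does not prove this lemma at all --- it is imported verbatim from \cite[Proposition 4.3]{LiMWY2011} --- so there is no in-paper argument to measure yours against, and I can only judge the proposal on its own terms. Your inventory of ingredients is the right one (work entirely in $T_xM$, characterize subdifferentials through directional derivatives and the identity \eqref{r-s-l}, use compactness of $\partial f(x)$ from Lemma \ref{Convex-f-P}(ii) to make the sum of the two subdifferentials closed), and the easy inclusion via $(f+g)'(x;u)=f'(x;u)+g'(x;u)$ is fine since $f'(x;\cdot)$ is finite. But the hard inclusion as written has genuine gaps. The central one: the pullback $\tilde f=f\circ\exp_x$ is \emph{not} convex on a ball $V\subseteq T_xM$. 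Weak convexity of $f$ at the single point $x$ controls $f$ only along geodesics emanating from $x$, i.e.\ along rays through the origin of $T_xM$; it says nothing about chords of $V$ that miss the origin. So classical Moreau--Rockafellar does not apply to $\tilde f,\tilde g$, and your fallback --- ``one verifies the one-sided derivative $f'(x;\cdot)$ is sublinear'' --- is not a routine check: the standard proof of subadditivity compares $f$ at the midpoint of the geodesic joining $\exp_x(2tu)$ and $\exp_x(2tv)$, a curve not passing through $x$, about which pointwise weak convexity gives no information. Sublinearity has to be extracted from \eqref{r-s-l} itself (its right-hand side, $\sup\{\langle\cdot,v\rangle:v\in\partial f(x)\}$, is a support function, hence sublinear) together with the Lipschitz estimate $|f'(x;u)-f'(x;u')|\le L^f_x\|u-u'\|$, valid because $x\in{\rm int}\mathcal{D}(f)$, which forces $f'(x;\cdot)={\rm cl}f'(x;\cdot)$. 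Without this, your sets $A$ and $B$ need not be convex and the separation step collapses.

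The second gap is that you silently symmetrize the hypothesis. The lemma assumes $x\in{\rm int}\mathcal{D}(f)\cap\mathcal{D}(g)$, not $x\in{\rm int}\mathcal{D}(g)$, so Lemma \ref{Convex-f-P}(ii) gives you nothing for $g$: the claims that $0\in{\rm int}\,\mathcal{D}(\tilde g)$ and that $\partial g(x)$ is nonempty and compact are unjustified ($\partial g(x)$ can be unbounded, as for $g=\delta_{[0,\infty)}$ at $0$, or empty, and $g'(x;\cdot)$ can take the value $+\infty$). A correct argument must keep the asymmetry: continuity of $f'(x;\cdot)$ yields ${\rm cl}\bigl(f'(x;\cdot)+g'(x;\cdot)\bigr)=f'(x;\cdot)+{\rm cl}g'(x;\cdot)$, which by \eqref{r-s-l} is the support function of $\partial f(x)+\partial g(x)$, a set that is closed because $\partial f(x)$ is compact and $\partial g(x)$ is closed; the case $\partial g(x)=\emptyset$ then needs a separate line. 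Relatedly, your disjointness step requires $\langle v,u\rangle\le f'(x;u)+{\rm cl}g'(x;u)$, which does \emph{not} follow from $\langle v,u\rangle\le f'(x;u)+g'(x;u)$ by itself (the inequality goes the wrong way, since ${\rm cl}g'\le g'$); it is recovered only after the continuity of $f'(x;\cdot)$ is in hand, and the phrase ``${\rm cl}f'+{\rm cl}g'=(f+g)'$ on a dense set'' is not the statement that does this work.
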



%

\subsection{VIP: existence  and convexity properties of solution sets}

Let $Q\subseteq M$ be a nonempty subset and let $A:Q\rightrightarrows{TM}$ be a set-valued vector field defined on $Q$, that is, $A(x)\subseteq T_xM$ is nonempty for each $x\in Q$. Consider the following variational inequality problem (${\rm VIP}$ for short) associated to the pair $(A,Q)$: To find a point $\bar x\in Q$ such that
 \begin{equation}\label{VIP}
\exists \bar v\in A(\bar x) \mbox{ s.t. $\langle\bar v,\dot{\gamma}_{\bar xy}(0)\rangle\ge0$ for any $y\in Q$ and $\gamma_{\bar xy}\in\Gamma_{\bar xy}^Q$}.
\end{equation}
Any point $\bar x\in Q$ satisfying \eqref{VIP} is called a solution of ${\rm VIP}$, and the set of all solutions is denoted by ${\rm VIP}(A,Q)$.

{Variational inequality problem \eqref{VIP} was first introduced in \cite{Nemeth2003}, for single-valued vector fields on Hadamard manifolds, and extended respectively  in \cite{LiLi2009} and  \cite{LiY2012} for single-valued vector fields and multivalued vector fields on general Riemannian manifolds}. As we have mentioned previously,   our approach to solve the EP is founded strongly on  some existence results about the ${\rm VIP}$, which are taken from \cite{LiY2012}. For this purpose, we recall some notions in the following definition; see, e.g, \cite{Li2009,LiY2012}.

\begin{definition}
Let $Q\subseteq M$ be a subset and $A:Q\rightrightarrows{TM}$ be a set-valued vector field on $Q$.  $A$ is said to be

{\rm (a)}  upper semi-continuous (usc for short) at $x_0$, if, for any open set $U$ satisfying $A(x_0)\subseteq U\subseteq T_{x_0}M$,
there exists an open neighborhood $U(x_0)$ of $x_0$ such that $P_{x_0,x}A(x)\subseteq U$ for any $x\in U(x_0)\cap Q$;

{\rm (b)} upper Kuratowski semi-continuous (uKsc for short) at $x_0$ if, for any sequences $\{x_k\}\subset Q$ and $\{u_k\}\subset TM$ with each $u_k\in A(x_k)$, relations $\lim_{k\rightarrow\infty}x_k=x_0\in Q$ and $\lim_{k\rightarrow\infty}u_k=u_0$ imply $u_0\in A(x_0)$;

{\rm (c)} usc (resp., uKsc) on $Q$ if it is usc (resp., uKsc) at each $x\in Q$.
\end{definition}

By definition, it is evident that upper semi-continuity implies upper Kuratowski semi-continuity.
In the following example, we provide two set-valued vector fields which are useful for our study in next sections  of  the present paper.

\begin{example}\label{remak-p}
Fix a point  $y\in M$, and define vector fields $ {\rm Exp}_{(\cdot)}^{-1}y: \rightrightarrows{TM}$ and $\exp_{(\cdot)}^{-1}y: \rightrightarrows{TM}$ respectively by
$$
 {\rm Exp}_x^{-1}y:=\{u\in T_xM:\exp_xu=y\}\quad\mbox{for each }x\in M,
$$
and
\begin{equation}\label{EYX}
\exp_{x}^{-1}y:=\{u\in   {\rm Exp}_x^{-1}y:\|u\|={\rm d}(x,y)  \}\quad\mbox{for each }x\in M.
\end{equation}
Then one can check easily by definition and Lemma \ref{cvg-exp} that
  ${\rm Exp}_{(\cdot)}^{-1}y$ is uKsc on $M$
and $\exp_{(\cdot)}^{-1}y$ is usc on $M$.
\end{example}

Recall from \cite{LiY2012} that a point $o\in Q$ is called a weak pole of Q if for each $x\in Q$, ${\rm min-}\Gamma_{ox}$ is a singleton and ${\rm min-}\Gamma_{ox}\subseteq Q$. Clearly, any subset with a weak pole is connected. The notions of the monotonicity in
the following definition are well known; see for example \cite{Colao2012,LiY2012}.

\begin{definition}\label{monotone-svf}
Let $Q\subseteq M$ be a 
subset and $A:Q\rightrightarrows{TM}$ be a set-valued vector field. The vector field $A$ is said to be

{\rm (a)} monotone on $Q$ if, for any $x,y\in Q$ and $\gamma_{xy}\in\Gamma_{xy}^Q$ the following inequality
holds:
$$
\langle v_x,\dot{\gamma}_{xy}(0)\rangle-\langle v_y,\dot{\gamma}_{xy}(1)\rangle\le 0\;\;\;\;\mbox{for any } v_x\in A(x),\;v_y\in A(y);
$$

{\rm (b)} strictly monotone on $Q$ if it is monotone and,
for any $x,y\in Q$ with $x\neq y$ and $\gamma_{xy}\in\Gamma_{xy}^Q$ the following inequality
holds:
$$
\langle v_x,\dot{\gamma}_{xy}(0)\rangle-\langle v_y,\dot{\gamma}_{xy}(1)\rangle< 0\;\;\;\;\mbox{for any } v_x\in A(x),\;v_y\in A(y).
$$
\end{definition}

Let $Q\subseteq M$ be a closed connected and locally convex set. By \cite[p. 170]{Sakai1996}, there exists a connected (embedded) $k$-dimensional totally geodesic sub-manifold $N$ of $M$ such that $Q=\overline{N}$. Following \cite{LiY2012}, the set ${\rm int}_RQ:=N$ is called the relative interior of $Q$. Moreover, as in \cite{LiY2012}, we say that a closed locally convex set $Q$ has the BCC (bounded convex cover) property if   there exists $o\in Q$ such that, for any $R\ge0$, there exists a weakly convex compact subset of $M$ containing $Q\cap {\bf B}(o,R)$.

\begin{remark}
We remark that the notion of the BCC property defined above is a litter stronger than that defined in \cite[Definition 3.9]{LiY2012}, where it is required that the compact subset containing $Q\cap {\bf B}(o,R)$ is ``locally convex"  rather than  ``weakly convex". From its proof, one sees  that the BCC property assumption defined in \cite[Definition 3.9]{LiY2012} seems  insufficient  for \cite[Theorem 3.10]{LiY2012}, while
 the   stronger version of the  BCC property   defined here  is  sufficient.
\end{remark}



%
%
For the remainder,  we  use $\mathcal{V}(Q)$ to denote the set of all uKsc set-valued vector
fields $A$ such that $A(x)$ is compact and convex for each $x\in Q$.

%
%
%
%
%
%
 Proposition \ref{LY-Existence-coercive} below   extends the corresponding existence result    in \cite[Theorem 3.10]{LiY2012} (see the explanation made in Remark \ref{remark-co}).
The proof of   Proposition \ref{LY-Existence-coercive} is similar to that for  \cite[Theorem 3.10]{LiY2012}, and is kept here for completeness.

\begin{proposition}\label{LY-Existence-coercive} Let $Q\subseteq M$ be a closed locally convex subset with a weak pole $o\in{\rm int}_RQ$ and $A\in\mathcal{V}(Q)$. Then ${\rm VIP}(A,Q)\neq\emptyset$ provided one of the following assumptions holds: 

{\rm (a)} $Q$ is compact; 


  {\rm (b)} $Q$ has the BCC property and
      there exists  a compact subset $L\subseteq M$ such that
       \begin{equation}\label{L_compact}
      \begin{array}{l}
        x\in Q\setminus L\Rightarrow
        [\forall v\in A(x),\exists y\in Q\cap L,\,\gamma_{xy}\in{\rm min-}\Gamma_{xy}^Q \mbox{ s.t.  }\langle v,\dot{\gamma}_{xy}(0)\rangle<0].
      \end{array}
      \end{equation}
\end{proposition}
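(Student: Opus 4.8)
The plan is to follow the standard two-step strategy for proving existence of solutions to variational inequalities: first handle the compact case (a) via a Knaster--Kuratowski--Mazurkiewicz (KKM)-type argument adapted to Riemannian manifolds, and then reduce the coercive case (b) to the compact case by restricting to a suitable weakly convex compact subset supplied by the BCC property and using the coercivity condition \eqref{L_compact} to certify that a solution of the restricted problem is actually a solution of the original one.

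For part (a), with $Q$ compact, I would define for each $y\in Q$ the set
$$
S(y):=\{x\in Q:\ \exists v\in A(x)\ \text{s.t.}\ \langle v,\dot\gamma_{xy}(0)\rangle\ge 0\ \text{for some}\ \gamma_{xy}\in{\rm min\text{-}}\Gamma_{xy}^Q\},
$$
or more precisely a variant tailored so that a point in $\bigcap_{y\in Q}S(y)$ is exactly a solution of \eqref{VIP}. The key points to verify are: (i) each $S(y)$ is closed in $Q$ (hence compact), using that $A$ is uKsc with compact convex values, Lemma~\ref{cvg-exp}, and that $Q$ is closed and locally convex so minimal geodesics behave continuously; (ii) the family $\{S(y)\}_{y\in Q}$ has the finite intersection property, which one gets from a KKM-type lemma after checking that the relevant point-to-set map is a KKM map on $Q$ — here the weak pole $o\in{\rm int}_RQ$ and the weak convexity / local convexity structure of $Q$ enter, together with Brouwer-type fixed point reasoning carried out inside the totally geodesic submanifold $N$ with $Q=\overline N$. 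Compactness of $Q$ then forces $\bigcap_{y\in Q}S(y)\neq\emptyset$, giving a solution. This KKM step, and in particular setting up the map correctly so that its images are weakly convex relative to $Q$ while the argument still only uses minimal geodesics (not unique geodesics), is the technical heart and the main obstacle; it is where the argument genuinely departs from the Hadamard case and where the ``weak pole in ${\rm int}_RQ$'' hypothesis does the real work.

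For part (b), let $o\in Q$ be the base point from the BCC property and also (as may be arranged, or assumed compatible) the weak pole. Pick $R>0$ large enough that the compact set $L$ from \eqref{L_compact} satisfies $L\cap Q\subseteq {\bf B}(o,R)$, and let $K\supseteq Q\cap{\bf B}(o,R)$ be a weakly convex compact subset of $M$ as guaranteed by the BCC property. Set $Q_R:=Q\cap K$; by Remark (b) after Definition~\ref{convexset}, the intersection of the locally convex $Q$ with the weakly convex $K$ is again a closed locally convex set, it contains $o$ as a weak pole with $o\in{\rm int}_RQ_R$, it is compact, and $A$ restricted to $Q_R$ still lies in $\mathcal{V}(Q_R)$. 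Applying part (a) to $(A,Q_R)$ produces $\bar x\in Q_R$ with $\langle\bar v,\dot\gamma_{\bar xy}(0)\rangle\ge 0$ for some $\bar v\in A(\bar x)$ and all $y\in Q_R$, $\gamma_{\bar xy}\in\Gamma_{\bar xy}^{Q_R}$. It then remains to upgrade this to a solution on all of $Q$: the coercivity condition \eqref{L_compact} rules out $\bar x\in Q\setminus L$ (since otherwise there would exist $y\in Q\cap L\subseteq Q_R$ and a minimal geodesic in $Q$, which one checks also lies in $Q_R$ after possibly enlarging $R$, along which $\langle\bar v,\dot\gamma_{\bar xy}(0)\rangle<0$, contradicting the previous line), so $\bar x\in L\subseteq {\rm int}$-type interior of $Q_R$ relative to $Q$; then a short ``geodesic segment'' argument — for arbitrary $y\in Q$ and $\gamma_{\bar xy}\in\Gamma_{\bar xy}^Q$, move a small distance $t$ along $\gamma_{\bar xy}$ to land inside $Q_R$, apply the $Q_R$-inequality to that nearby point, and let $t\to 0^+$, using Lemma~\ref{cvg-exp} — yields $\langle\bar v,\dot\gamma_{\bar xy}(0)\rangle\ge 0$. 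Hence $\bar x\in{\rm VIP}(A,Q)$, completing the proof.

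I expect the routine parts (closedness of the $S(y)$, the limiting argument in (b), the bookkeeping with $R$) to be straightforward given Lemmas~\ref{cvg-exp}, \ref{minimal-geodesic} and the stated properties of $\mathcal{V}(Q)$, while the genuinely delicate point is the KKM/fixed-point argument in the compact case, which must be executed inside the relative interior submanifold $N$ and must respect the distinction between weak convexity and geodesic convexity throughout.
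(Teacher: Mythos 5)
Your treatment of case (b) is essentially the paper's argument: use the BCC property to produce a weakly convex compact set $K\supseteq Q\cap{\bf B}(o,R)$ with $L\cap Q\subseteq{\bf B}(o,R)$, apply the compact case to $Q\cap K$ (which is compact, locally convex, and still has $o$ as a weak pole in its relative interior because $K$ is weakly convex), use \eqref{L_compact} to force the resulting solution into $Q\cap L\subseteq{\bf B}(o,R)$, and then localize. The only cosmetic difference is that the paper delegates the localization step to \cite[Proposition 3.2]{LiY2012}, while you carry it out by hand with the reparametrized-geodesic argument; that argument is fine (indeed no limit $t\to0^+$ is needed, since $\dot\gamma_{\bar x y_t}(0)=t\,\dot\gamma_{\bar xy}(0)$ gives the inequality from a single small $t$ by homogeneity). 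The point you flag about the minimal geodesic from \eqref{L_compact} having to lie in the restricted set is a genuine subtlety, and you handle it at the same level of detail as the paper does.

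The gap is in case (a). The paper does not prove the compact case at all: it is precisely \cite[Theorem 3.6]{LiY2012} and is simply cited. You instead announce a KKM/fixed-point proof and then explicitly defer its central step (``the technical heart and the main obstacle''), so as written nothing is actually established for compact $Q$, and case (b), which reduces to (a), inherits the gap. Moreover, the KKM setup you sketch has a quantifier problem: a point of $\bigcap_{y\in Q}S(y)$ with your $S(y)$ only yields, for each $y$, some $v_y\in A(\bar x)$ depending on $y$, whereas \eqref{VIP} demands a single $\bar v\in A(\bar x)$ working for all $y$ and all $\gamma_{\bar xy}\in\Gamma^Q_{\bar xy}$ simultaneously; passing from the former to the latter requires a minimax or Kakutani-type step (exploiting compactness and convexity of $A(\bar x)$), not just closedness of the $S(y)$ plus the finite intersection property. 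The efficient repair is simply to invoke \cite[Theorem 3.6]{LiY2012} for (a), as the paper does, and keep your reduction for (b).
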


\begin{proof} It was known in \cite[Theorem 3.6]{LiY2012} in the case when $Q$ is compact. Below we assume that assumption (b) holds.
Then,  there exists a compact subset
   $L$ 
   such that \eqref{L_compact} holds. Then
     there exist $R>0$ and a weakly convex and  compact subset $K_R$ of $M$ such that
       $ L\subset   {{\bf B}(o,R)}$ and $Q\cap  {\bf B}(o,R)\subseteq K_R$. 
Write $Q_R:=Q\cap \overline{{\bf B}(o,R)}$, and $\hat Q_R:=Q\cap K_R$ for saving the print space. Then
$$ Q\cap L\subseteq Q\cap  {\bf B}(o,R)\subseteq Q_R\subseteq \hat Q_R.
$$
Thus, by \eqref{L_compact}, one checks that
\begin{equation}\label{LYPL-1}
{\rm VIP}(A,\hat Q_R)\subseteq{\rm VIP}(A, Q_R)\subseteq Q\cap L\subseteq {{\bf B}(o,R)}.
\end{equation}
 Moreover, since $o\in{\rm int}_RQ$ is a weak pole of $Q$ (and so the minimal geodesic $\gamma_{ox}$ joining $o$ to $x$ is unique) and  $K_R$ is weakly convex, one can check by definition that
 $o$ is a weak pole of $\hat Q_R$ and  $o\in{\rm int}_R\hat Q_R$ (noting that $Q\cap  {\bf B}(o,R)\subseteq K_R$).
 Thus \cite[Theorem 3.6]{LiY2012} is applied (with $\hat Q_R$ in place of $A$) to get that ${\rm VIP}(A,\hat Q_R)\neq\emptyset$.
 In view of \eqref{LYPL-1},  $\emptyset\neq{\rm VIP}(A,Q_R)\subseteq {{\bf B}(o,R)}$, and
it follows from \cite[Proposition 3.2]{LiY2012} that
$$ {\rm VIP}(A,Q_R)={\rm VIP}(A,Q_R)\cap{{\bf B}(o,R)}\subseteq{\rm VIP}(A,Q),$$
and so   ${\rm VIP}(A,Q)\neq\emptyset$,  completing the
proof.
%
\end{proof}

\begin{remark}\label{remark-co}
Let $Q\subseteq M$ be a locally convex subset with a weak pole $o\in{\rm int}_RQ$. Recall from \cite{LiY2012} that the vector field
  $A$ satisfies the coerciveness condition on $Q$ if
$$
 \sup_{v_{o}\in A(o),v_x\in A(x)}\frac{\langle v_x,\dot{\gamma}_{xo}(0)\rangle-\langle v_{o},\dot{\gamma}_{xo}(1)\rangle}{{\rm d}(o,x)}\rightarrow-\infty\quad\mbox{as }{\rm d}(o,x)\rightarrow +\infty\mbox{ for }x\in Q.
$$
Then one   checks directly by definition  that the coerciveness condition for $A$ implies   that there exists  a compact subset $L\subseteq M$ such that  \eqref{L_compact} in   {\rm (b)} of Proposition \ref{LY-Existence-coercive}  holds {(noting that $A(o)$ is compact). However, the converse is not true, in general, even in the Euclidean space setting. To see this, one may consider the simple mapping  $A$ on $Q:=\IR$ defined by
 $A(x):=[-1,1]$ if $x=0$ and  $A(x):={\rm sign} (x)$ otherwise. }
Thus Proposition \ref{LY-Existence-coercive} is an extension of the corresponding existence result    in \cite[Theorem 3.10]{LiY2012}.
\end{remark}

As usual, we set $D_{\kappa}:=\frac{\pi}{\sqrt{\kappa}}$ if $\kappa>0$ and $D_{\kappa}:=+\infty$ if $\kappa\le 0$  (see e.g., \cite{LiY2012,Sakai1996}).
The following proposition lists some   results on the structure  of    the solution set ${\rm VIP}(A,Q)$, 
which are known in  \cite[Theorems 3.13, 4.6 and 4.8]{LiY2012}, respectively.

\begin{proposition}\label{LY-unique} Suppose that $A\in\mathcal{V}(Q)$ is   monotone on $Q\subseteq M$ and ${\rm VIP}(A,Q)\not=\emptyset$.
Then the following assertion holds:

{\rm (i)} If $Q$ is a locally convex subset, then the solution set ${\rm VIP}(A,Q)$ is locally convex.

{\rm (ii)} If 
$A$ is strictly monotone on $Q$, then  ${\rm VIP}(A,Q)$ is a singleton.

 {\rm (iii)} If $M$ is of the sectional curvatures bounded from above by some   $\kappa\in [0,+\infty)$ and $Q$ is a $D_{\kappa}$-convex subset, then the solution set ${\rm VIP}(A,Q)$ is $D_{\kappa}$-convex.
. 
\end{proposition}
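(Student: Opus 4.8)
These three assertions are exactly \cite[Theorems 3.13, 4.6 and 4.8]{LiY2012}, so the plan is to reduce the study of the solution set of \eqref{VIP} to that reference, after checking that the present hypotheses ($A\in\mathcal V(Q)$, monotonicity resp.\ strict monotonicity, and local convexity / $D_\kappa$-convexity / mere solvability of $Q$) match what is required there. The single engine behind all three parts is the Minty-type reformulation of \eqref{VIP}: when $A$ is monotone, $\bar x\in{\rm VIP}(A,Q)$ if and only if $\langle v_y,\dot\gamma_{y\bar x}(0)\rangle\le 0$ for all $y\in Q$, all $v_y\in A(y)$ and all geodesics from $\bar x$ to $y$ in $Q$. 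The ``only if'' part is immediate from monotonicity; the ``if'' part is a geodesic-hemicontinuity argument, letting $t\to 0^+$ along $\gamma_{\bar xy}(t)$, where the uKsc-ness and the compactness and convexity of the values of $A$ are used to extract a certifying vector in $A(\bar x)$. The point of this reformulation is that it presents ${\rm VIP}(A,Q)$ (essentially) as the intersection of $Q$ with the ``sublevel'' sets $\{z:\langle v_y,\exp_y^{-1}z\rangle\le 0\}$ taken over all $y\in Q$ and $v_y\in A(y)$.

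Part (ii) I would prove directly, without the reformulation. Suppose $\bar x,\bar x'\in{\rm VIP}(A,Q)$ with $\bar x\neq\bar x'$, and let $\gamma_{\bar x\bar x'}\in\Gamma^Q_{\bar x\bar x'}$ be a minimal geodesic joining them in $Q$ (which exists in the setting of \cite{LiY2012}). Let $\bar v\in A(\bar x)$ and $\bar v'\in A(\bar x')$ certify that $\bar x$ and $\bar x'$ solve \eqref{VIP}. Testing the solution property of $\bar x$ against $y=\bar x'$ gives $\langle\bar v,\dot\gamma_{\bar x\bar x'}(0)\rangle\ge 0$, and that of $\bar x'$ against $y=\bar x$ (using the reversed geodesic, so that $\dot\gamma_{\bar x'\bar x}(0)=-\dot\gamma_{\bar x\bar x'}(1)$) gives $\langle\bar v',\dot\gamma_{\bar x\bar x'}(1)\rangle\le 0$. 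Summing, $\langle\bar v,\dot\gamma_{\bar x\bar x'}(0)\rangle-\langle\bar v',\dot\gamma_{\bar x\bar x'}(1)\rangle\ge 0$, contradicting strict monotonicity, which forces this quantity to be $<0$.

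For parts (i) and (iii) I would argue with the Minty reformulation. Fix two solutions $\bar x,\bar x'$ joined in $Q$ by a minimal geodesic $\gamma$, and let $z=\gamma(t)$, $t\in(0,1)$; combining the solution inequalities at $\bar x$ and $\bar x'$ with the monotonicity of $A$ one first sees that $\langle w,\dot\gamma(t)\rangle=0$ for every $w\in A(z)$, i.e.\ all solutions on $\gamma$ share a common supporting direction there. To conclude $z\in{\rm VIP}(A,Q)$ it then suffices, via the reformulation, to check the Minty inequality at $z$, and since $\bar x$ and $\bar x'$ already satisfy it this reduces to a convexity property of the sets $\{z:\langle v_y,\exp_y^{-1}z\rangle\le 0\}$: for (iii), that each such set is $D_\kappa$-convex when the sectional curvature of $M$ is $\le\kappa$ and $Q$ is $D_\kappa$-convex; for (i), that each such set, intersected with a sufficiently small ball about any point of $\overline{{\rm VIP}(A,Q)}$ (on which the curvature is locally bounded), is weakly convex. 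The structural conclusions then follow by taking intersections: an arbitrary intersection of $D_\kappa$-convex sets is $D_\kappa$-convex, and the intersection of a weakly convex set with a strongly convex one is strongly convex (the Remark following Definition \ref{convexset}), so that ${\rm VIP}(A,Q)$, being $Q$ intersected with all these sublevel sets, inherits $D_\kappa$-convexity, respectively local convexity.

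The genuine obstacle is this last ingredient: the sets $\{z:\langle v_y,\exp_y^{-1}z\rangle\le 0\}$ are well behaved only under an upper curvature bound and only at scale $D_\kappa$, precisely because the global quasi-convexity claim \eqref{f} is false on general manifolds (as recalled right after \eqref{f}). Establishing the correct $D_\kappa$-scale statement requires second-variation / Rauch-type comparison estimates and is the technical core of \cite[Theorems 3.13, 4.6 and 4.8]{LiY2012}; a secondary delicate point is carrying out the set-valued Minty$\Leftrightarrow$Stampacchia passage with all geodesics confined to $Q$ and with selections from the values $A(\cdot)$ handled through their compactness and convexity. Since the present paper does not redo any of this --- Proposition \ref{LY-unique} only packages these facts for use in Sections 3--5 --- the proof here comes down to matching hypotheses and citing \cite{LiY2012}.
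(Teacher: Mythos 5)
Your proposal is correct and matches the paper's treatment: Proposition \ref{LY-unique} is stated there without proof, as a direct citation of \cite[Theorems 3.13, 4.6 and 4.8]{LiY2012}, which is exactly the reduction you arrive at. Your reconstruction of the underlying arguments (the Minty-type reformulation for (i) and (iii), the direct two-solution contradiction for (ii)) is a reasonable account of what happens inside the cited reference, but the paper itself does none of this work.
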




%
%
%
%

\section{Equilibrium problem}
Throughout the whole section, we always assume that

$\bullet$ $Q\subseteq M$ is  a nonempty closed and locally convex subset;

$\bullet$ $F:M\times M\rightarrow\overline \IR$ 
is a proper bifunction with $0\le F(x,x)<+\infty$ for any $x\in Q$.\\
The domain $\mathcal{D}(F)$ of $F$ is defined by $$\mathcal{D}(F):=\{(x,y)\in M\times M:-\infty<F(x,y)<+\infty\}.$$
Recall that the {\rm EP} associated to the pair $(F,Q)$ 
is to find a point $\bar x\in Q$ such that $F(\bar x,y)\ge0$ for any $y\in Q$. 
Any point $\bar x\in Q$ satisfying \eqref{EP} is called a solution of ${\rm EP}$ \eqref{EP}, and the set of all solutions is denoted by ${\rm EP}(F,Q)$.

\subsection{Properties of bifunctions}

In the following definition we introduce some monotonicity and convexity notions for bifuctions on Riemannian manifolds. In particular,  the
corresponding ones of items (a) and (b) in linear spaces are refereed to, for example, \cite{Chadli2000,Konnov2009}; while item (c) as far as we know are new and  plays a key role  in our study in the present paper.

\begin{definition}\label{monotone-bif}
The bifunction $F$ is said to be 

{\rm (a)} monotone on $Q\times Q$ if $F(x,y)+F(y,x)\le0$ for any $(x,y)\in Q\times Q$;

{\rm (b)} strictly monotone on $Q\times Q$ if  $F(x,y)+F(y,x)<0$ for any $(x,y)\in Q\times Q$  with  $ x\neq y$ and
\begin{equation}\label{f00}
 F(x,x)=0 \quad\mbox{for  any }x\in Q;
\end{equation}

{\rm (c)} point-wise weakly convex (resp. point-wise convex) on Q if,
for any   $x\in Q$, the function $F(x,\cdot):M\rightarrow\overline{\IR}$ is weakly convex (resp. convex) at $x$.
\end{definition}

Note that  if $F$ is monotone on $Q\times Q$, then \eqref{f00} holds (as  $F(x,x)\ge0$ for any $x\in Q$ by assumption).


Let $V:Q\rightrightarrows {TM}$ be a vector field. 
   Associated to $V$, we
 %
%
define  the bifunction $G_V:M\times M\rightarrow \overline{\IR}$   by
\begin{equation}\label{f-G-G}
G_V(x,y):=\sup_{u\in V(x),v\in \exp_{x}^{-1}y}\langle u,v\rangle\;\;\;\;\mbox{for any } (x,y)\in M\times M,
\end{equation}
where   for any $(x,y)\in M\times M$,  $\exp_{x}^{-1}y$ is  defined by \eqref{EYX}, and  we adopt the
the convention that $\sup\emptyset=+\infty$.
Proposition \ref{example} below provides some properties of the bifunctions $G_V$ that will be used in the sequel.
As usual, for a subset $Z$ of $T_xM$, we use  $\overline{{\rm co}}Z$ to  denotes the closed and convex hull of the set $Z$ in $T_xM$.

\begin{proposition}\label{example} Suppose $V(x)\subseteq T_xM$ is   nonempty   for each $x\in Q$, and let $G_V$ be defined by \eqref{f-G-G}.  Then the following assertions hold:

{\rm (i)} If $V(x)$ is compact-valued, then
\begin{equation}\label{domain-G-G}
\mathcal{D}(G_V)=Q\times M\quad\mbox{and}\quad G_V(x,x)=0\;\;\mbox{ for any }x\in Q.
\end{equation}

{\rm (ii)} $G_V(x,\cdot)\circ\gamma_{xy}$ is convex on $[0,1]$ for any $x,\,y\in Q$ and any geodesic $\gamma_{xy}\in{\rm min-}\Gamma_{xy}$.

{\rm (iii)} If $G:Q\times Q\to \overline\IR$ is  point-wise weakly convex on $Q$, then so is   $G_V+G$.

{\rm (iv)} $\partial G_V(x,\cdot)(x)={\overline {\rm co}}V(x)$  for any $x\in Q$.

{\rm (v)} If $V(x)$ is compact-valued   and $V$   usc on $Q$, then  the function  $x\mapsto G_V(x,y)$ is usc on $Q$ for each $y\in Q$.

 %
%
%
%
\end{proposition}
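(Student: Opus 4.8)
The plan is to establish the five assertions essentially in the order stated, since each later part can lean on the earlier ones together with the results already collected in Section~2. Throughout, fix $x\in Q$ and recall that $\exp_{x}^{-1}y$ denotes the set of minimal-length preimages of $y$ under $\exp_x$, as in \eqref{EYX}.

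For (i), I would argue that if $V(x)$ is compact and $\exp_{x}^{-1}y$ is nonempty, then the supremum in \eqref{f-G-G} is taken over a nonempty set and is bounded above (by $\sup_{u\in V(x)}\|u\|\cdot{\rm d}(x,y)<+\infty$ via Cauchy--Schwarz), so $G_V(x,y)<+\infty$; it is also $>-\infty$ since any single pairing $\langle u,v\rangle$ is finite. Since $M$ is complete, $\exp_{x}^{-1}y\neq\emptyset$ for every $y\in M$, which gives $\{x\}\times M\subseteq\mathcal D(G_V)$ for each $x\in Q$, hence $\mathcal D(G_V)=Q\times M$. Taking $y=x$, the only element of $\exp_{x}^{-1}x$ is $0\in T_xM$ (the unique minimal geodesic from $x$ to $x$ is constant), so $G_V(x,x)=\sup_{u\in V(x)}\langle u,0\rangle=0$. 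For (ii), fix $x,y\in Q$ and a minimal geodesic $\gamma_{xy}\in{\rm min\text-}\Gamma_{xy}$. For $t\in[0,1]$ the point $\gamma_{xy}(t)$ has $t\dot\gamma_{xy}(0)\in\exp_{x}^{-1}\gamma_{xy}(t)$ (the restriction of a minimal geodesic to $[0,t]$ is minimal), so $G_V(x,\gamma_{xy}(t))\ge\sup_{u\in V(x)}\langle u,t\dot\gamma_{xy}(0)\rangle=t\,G_V(x,\gamma_{xy}(1))$ when $t>0$, and $=0$ at $t=0$; combined with the analogous lower bound this shows $G_V(x,\gamma_{xy}(t))=t\,G_V(x,y)$ along $\gamma_{xy}$ whenever $t\,\dot\gamma_{xy}(0)$ is the unique minimal preimage — but in fact I only need the inequality $G_V(x,\gamma_{xy}(t))\le t\,G_V(x,y)$, which is exactly convexity of $G_V(x,\cdot)\circ\gamma_{xy}$ on $[0,1]$ once one also checks $G_V(x,\gamma_{xy}(t))\le (1-t)G_V(x,x)+tG_V(x,y)=tG_V(x,y)$; this last line is the content one must verify carefully, using that any $v\in\exp_x^{-1}\gamma_{xy}(t)$ has $\|v\|=t\,{\rm d}(x,y)$ and that, along the fixed minimal geodesic, $t\dot\gamma_{xy}(0)$ is such a preimage. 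Assertion (ii) in particular shows $G_V$ is point-wise weakly convex on $Q$, and then (iii) is immediate: for fixed $x$, $G_V(x,\cdot)$ is weakly convex at $x$ by (ii) and $G(x,\cdot)$ is weakly convex at $x$ by hypothesis, and a sum of two functions each weakly convex at $x$ along a \emph{common} minimal geodesic is weakly convex at $x$ — here one chooses the geodesic furnished by the weak convexity of $G(x,\cdot)$ and uses that $G_V(x,\cdot)$ satisfies \eqref{D-convex} along \emph{every} minimal geodesic by (ii).

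Part (iv) is the computational heart. I would compute the directional derivative of $f:=G_V(x,\cdot)$ at $x$ in a direction $u\in T_xM$: for small $t>0$, $\exp_x(tu)$ has $tu\in\exp_{x}^{-1}\exp_x(tu)$ (for $t<r_x/\|u\|$ the geodesic $s\mapsto\exp_x(su)$ is minimal), and in fact it is the \emph{only} minimal preimage, so $f(\exp_x tu)=\sup_{w\in V(x)}\langle w,tu\rangle=t\,\sup_{w\in V(x)}\langle w,u\rangle$, whence $f'(x;u)=\sup_{w\in V(x)}\langle w,u\rangle=\sigma_{\overline{\rm co}V(x)}(u)$, the support function of $\overline{\rm co}V(x)$. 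Since this is already lower semicontinuous (it is a support function of a compact convex set — here compactness of $V(x)$, hence of $\overline{\rm co}V(x)$, is used), ${\rm cl}f'(x;\cdot)=f'(x;\cdot)=\sigma_{\overline{\rm co}V(x)}$, and then \eqref{r-s-l} gives $\sup_{v\in\partial f(x)}\langle u,v\rangle=\sigma_{\overline{\rm co}V(x)}(u)$ for all $u$; since $\partial f(x)$ is compact convex (Lemma~\ref{Convex-f-P}(ii), applicable because $f$ is weakly convex at $x$ by (ii) and $x\in\mathcal D(f)$, indeed $x\in{\rm int}\,\mathcal D(f)$ as $\mathcal D(f)\supseteq\{x\}\times M$), two compact convex sets with the same support function coincide, giving $\partial G_V(x,\cdot)(x)=\overline{\rm co}V(x)$. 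Finally, for (v), fix $y\in Q$ and let $x_k\to x_0$ in $Q$; choose $v_k\in\exp_{x_k}^{-1}y$ realizing (nearly) the sup in $G_V(x_k,y)$ and $u_k\in V(x_k)$ likewise. Since $\|v_k\|={\rm d}(x_k,y)\to{\rm d}(x_0,y)$, passing to a subsequence $v_k\to v_0$ with $v_0\in\exp_{x_0}^{-1}y$ (this is the uKsc/usc property of $\exp_{(\cdot)}^{-1}y$ recorded in Example~\ref{remak-p}, together with the norm constraint), and by usc of $V$ with compact values one extracts $P_{x_0,x_k}u_k\to u_0\in V(x_0)$ along a further subsequence; then Lemma~\ref{cvg-exp} (with parallel transport being an isometry) gives $\langle u_k,v_k\rangle\to\langle u_0,v_0\rangle\le G_V(x_0,y)$, so $\limsup_k G_V(x_k,y)\le G_V(x_0,y)$, i.e.\ upper semicontinuity of $x\mapsto G_V(x,y)$.

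The step I expect to be the main obstacle is (iv), and within it the interchange of $\sup$ with the limit defining the directional derivative: one must be sure that for $t$ small enough the \emph{only} element of $\exp_x^{-1}\exp_x(tu)$ is $tu$ itself (so that no spurious longer-or-other minimal geodesics contribute to the supremum), which is where the convexity radius $r_x>0$ and the definition \eqref{cvx-rd-p} enter decisively — this is exactly the subtlety that fails without the local-convexity/minimality structure. A secondary delicate point, shared by (ii) and (v), is book-keeping the possibility that $\exp_x^{-1}y$ contains several vectors (all of the same norm ${\rm d}(x,y)$): for (ii) this only strengthens $G_V$ but one must phrase the convexity inequality to hold for the \emph{particular} minimal geodesic chosen, and for (v) one must make sure the chosen $v_k$ can be controlled, which is precisely why \eqref{EYX} pins down the norm and why the usc statement in Example~\ref{remak-p} is stated for $\exp_{(\cdot)}^{-1}y$ rather than the larger ${\rm Exp}_{(\cdot)}^{-1}y$.
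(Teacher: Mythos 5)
Your treatment of (i), (iv) and (v) is correct and follows essentially the same route as the paper ((v) is phrased via near‑maximizers and subsequences rather than the paper's $\varepsilon$‑estimate with parallel transport, but the two are equivalent), and your reduction of (iii) to (ii) --- choosing the minimal geodesic furnished by the weak convexity of $G(x,\cdot)$ and using that (ii) holds along \emph{every} minimal geodesic --- is exactly the intended argument.

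The genuine gap is in (ii). The inequality to be proved is
\[
G_V(x,\gamma_{xy}(t))\;=\;\sup_{u\in V(x),\,v\in\exp_x^{-1}\gamma_{xy}(t)}\langle u,v\rangle\;\le\;t\,G_V(x,y),
\]
and the supremum on the left ranges over \emph{all} minimal preimages of the intermediate point $y_t:=\gamma_{xy}(t)$, not just $t\dot\gamma_{xy}(0)$. The two facts you invoke --- that every $v\in\exp_x^{-1}y_t$ has norm $t\,{\rm d}(x,y)$, and that $t\dot\gamma_{xy}(0)$ is one such $v$ --- do not yield this bound: a preimage $v\in\exp_x^{-1}y_t$ that is well aligned with some $u\in V(x)$, while every element of $\exp_x^{-1}y$ is badly aligned with $V(x)$, would violate it, and norm information alone cannot exclude this. (Your closing remark that multiplicity of minimal preimages ``only strengthens $G_V$'' for (ii) misdiagnoses the difficulty: multiplicity at the \emph{intermediate} point $y_t$ is precisely what threatens the upper bound.) What is actually needed, and what the paper proves, is the set inclusion $\exp_x^{-1}y_t\subseteq t\,\exp_x^{-1}y$: given $v_t\in\exp_x^{-1}y_t$, concatenate the geodesic $s\mapsto\exp_x(s v_t/t)$ on $[0,t]$ with $\gamma_{xy}$ on $[t,1]$; the resulting piecewise differentiable curve has length $\|v_t\|+{\rm d}(y_t,y)=t\,{\rm d}(x,y)+(1-t)\,{\rm d}(x,y)={\rm d}(x,y)$, hence by Lemma \ref{minimal-geodesic} it is itself a minimal geodesic from $x$ to $y$, so its initial velocity $v_t/t$ lies in $\exp_x^{-1}y$. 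With this inclusion the desired inequality is immediate, and assertion (iii), which you correctly derive from (ii), is repaired along with it.
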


\begin{proof}
Assertion (i) is clear by definition. 
To show assertion (ii), fix $x, y\in Q$ and let $\gamma_{xy}\in {\rm min-}\Gamma_{xy}$ and $y_t:=\gamma_{xy}(t)$.  Then we have that
\begin{equation}\label{sbet}
  \exp_{x}^{-1}{y_t}\subseteq t\, \exp_{x}^{-1}{y}\quad\mbox{for each }t\in(0,1).
\end{equation}
  Indeed,  let $v_t\in \exp_{x}^{-1}{y_t}$ with some $t\in (0,1)$. Then, $\|v_t\|={\rm d}(x,y_t)=t{\rm d}(x,y)$ and $\exp_xv_t=y_t$.
 Define a curve $\beta:[0,1]\rightarrow M$ by
\begin{equation}
\beta(s):=\left\{\begin{array}{ll}
\exp_x\frac{s}{t}v_t,\quad&\mbox{$s\in[0,t]$},\\
\gamma_{xy}(s),&\mbox{$s\in(t,1]$}.
\end{array}
\right.
\nonumber
\end{equation}
Then  $ l(\beta)=\|v_t\|+{\rm d}(y_t,y)={\rm d}(x,y)$. This means that $\beta\in{\rm min-}\Gamma_{xy}$
thanks to Lemma \ref{minimal-geodesic}. Therefore, $ \frac{1}{t}v_t\in \exp_{x}^{-1}{y}$ by definition because  $\dot{\beta}(0)=\frac{1}{t}v_t$;
hence \eqref{sbet} holds.  Thus
 $$G_V(x,\gamma_{xy}(t))
 = \sup_{u\in V(x),v_t\in \exp_{x}^{-1}{y_t}}\langle u, {v_t} \rangle
 \le t\sup_{u\in V(x),v\in \exp_{x}^{-1}{y}}\langle u,v\rangle=tG(x,y).  $$
 This shows that  $G_V(x,\gamma_{xy}(\cdot))$ is convex on $[0,1]$ (noting that $G_V(x,x)=0$), and assertion (ii) is shown as 
  $\gamma_{xy}\in {\rm min-}\Gamma_{xy}$ is arbitrary.

Assertion (iii) follows immediately from assertion (ii). Now, we verify assertion (iv). To proceed, let $x\in Q$
and 
$\xi\in T_xM$. Then  for any  $t>0$   small enough, one has that $\exp_{x}^{-1}{{\exp_xt\xi}}=\{t\xi\}$. Thus noting that $G_V(x,x)=0$, we have  by definition
that
\begin{equation}\label{GSub-Con-1}
G_V(x,\cdot)'(x;\xi)
=\lim_{t\rightarrow0^+}\frac{\sup_{u\in V(x)}\langle u,t\xi\rangle}{t}=\sup_{u\in V(x)}\langle u,\xi\rangle=\sup_{u\in{\overline {\rm co}}{V(x)}}\langle u,\xi\rangle.
\end{equation}
This, together with  \eqref{r-s-l},   implies that
$$\sup_{u\in{\overline {\rm co}}{V(x)}}\langle u,\xi\rangle\ge \sup_{u\in\partial G_V(x,\cdot)(x)}\langle u,\xi\rangle\quad\mbox{for any }\xi\in T_xM,
$$
and so $\partial G_V(x,\cdot)(x)\subseteq {\overline {\rm co}}{V(x)}$ by \cite[Corollary 13.1.1, p.113]{Rockafellar1970}.
Moreover, by \eqref{GSub-Con-1}, one have by definition that
$$
\partial G_V(x,\cdot)(x)=\partial G_V(x,\cdot)'(x;0)  \supseteq {\overline {\rm co}}{V(x)}.
$$
 Thus assertion (iv) is shown.

It remains to show assertion (v). To this end, fix $y\in Q$. Let $\varepsilon >0$, $x\in Q$ and let $\{x_n\}\subseteq Q$ be such that $\lim_{n\rightarrow\infty}x_n=x$.
Since $V$ and $\exp_{(\cdot)}^{-1}{y}$ are usc at $x$ (see Lemma \ref{remak-p}),  there is $K\in\IN$ such that
\begin{equation}\label{upper-cot-P3}
P_{x,x_n}V(x_n)\subseteq{\bf B}(V(x),\varepsilon)\quad\mbox{and}\quad P_{x,x_n}\exp_{x_n}^{-1}{y}\subseteq{\bf B}(\exp_{x}^{-1}{y},\varepsilon)\quad \mbox{for each }n\ge K.
\end{equation}
Set $R:=\max\{|V(x)|,{\rm d}(x,y)\}$ (where $|V(x)|:=\max_{v\in V(x)}\{\|v\|\}<+\infty$ as $V(x)$ is compact), and, without loss of generality, assume that $\varepsilon<R$. Then, it follows from \eqref{upper-cot-P3} that, for any $n>K$,
$$\sup_{v_n\in V(x_n), u_n\in \exp_{x_n}^{-1}{y}}\langle v_n,u_n\rangle\le\sup_{ v\in {\bf B}(V(x),\varepsilon), u\in {\bf B}(\exp_{x}^{-1}{y},\varepsilon)}\langle  v, u\rangle\le\sup_{v\in V(x), u\in \exp_{x}^{-1}{y}}\langle v,u\rangle+3\varepsilon R,$$
and so 
$\overline{\lim}_{n\rightarrow\infty}G_V(x_n,y)\le G_V(x,y)+ 3R\varepsilon$. Thus, assertion (v) holds as $\varepsilon>0$ is arbitrary  and
the proof is complete.
 %
%
%
 \end{proof}


\subsection{Relationship between VIP and EP}

 For the remainder of the paper, we  will  make use of the following hypotheses for the bifunction $F$, where, as usual, we
  use $\delta_{C}(\cdot)$ to denote the indicator function of the nonempty subset $C$ 
  defined  by $\delta_C(x):=
0$   if  $ x\in C$  and  $+\infty$  otherwise:


{\rm (H1)}  $F$ is point-wise weakly convex on $Q$ and,  $x\in {\rm int} \mathcal{D}(F(x,\cdot))$ for each $x\in Q$.

{\rm (H2)}   $F+\delta_{Q\times Q}$ is point-wise weakly convex on $Q$.

{\rm (H3)} For any $y\in Q$, the function $x\mapsto F(x,y)$ is usc  on $Q$.

{\rm (H4)} The function $x\mapsto F(x,x)$ is lower semi-continuous (lsc for short) on $Q$.

{
\begin{remark}
We remark that 
the latter part of hypothesis {\rm (H1)} is particularly satisfied if $Q\times Q\subseteq{\rm int}\mathcal{D}(F)$. The first part of hypothesis {\rm (H1)} and hypothesis {\rm (H2)} 
are satisfied in the case when $Q$ is weakly convex and $F(x,\cdot)$ is (weakly) convex for any $x\in Q$, which, together with Hypothesis {\rm (H3)} are     standard assumption for the {\rm EP} (see, e.g, \cite{Chadli2000,Colao2012,Combettes2005,Iusem2009,Batista2015}); while
   hypothesis {\rm (H4)} is particularly satisfied if  $F(x,x)=0$ for any $x\in Q$ (which was used in \cite{Chadli2000,Combettes2005,Iusem2009}).
\end{remark}
}

Note that, by definition, the following implication holds:
\begin{equation}\label{A-Weak-C}
\mbox{{\rm (H2)}$\Longrightarrow Q$ is weakly convex}.
\end{equation}
Associated to the pair $(F,Q)$, we define  the set-valued vector field  $A_F:Q\rightrightarrows {TM}$   by
\begin{equation}\label{VIP-10}
A_F(x):=\partial F(x,\cdot)(x)\quad \mbox{for any $x\in Q$}.
\end{equation}
Then the  following proposition is clear from Lemma \ref{Convex-f-P} (ii). 
\begin{proposition}\label{P-PWF-0}
Suppose that $F$ satisfies (H1). Then, 
the set-valued vector field  $A_F$  is well-defined,  compact convex-valued on $Q$  and  
satisfies
  $$\max_{v\in A_F(x)}\|v\|\le L_x^F \quad\mbox{for each }x\in Q,
$$
where $L_x^F$ stands for  the center  Lipschitz constant of $F(x,\cdot)$ at $x$.
\end{proposition}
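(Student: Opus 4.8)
The statement to prove, Proposition \ref{P-PWF-0}, asserts that under hypothesis (H1), the vector field $A_F(x) := \partial F(x,\cdot)(x)$ is well-defined (nonempty), compact and convex valued on $Q$, with the norm bound $\max_{v\in A_F(x)}\|v\| \le L_x^F$.

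The plan is to reduce everything to Lemma \ref{Convex-f-P} (ii) applied pointwise, together with the bound \eqref{bound-subd}. First I would fix an arbitrary $x \in Q$ and consider the single-variable function $f_x := F(x,\cdot) : M \to \overline{\IR}$. The task is to check that the hypotheses of Lemma \ref{Convex-f-P} (ii) are met by $f_x$ at the point $x$: namely that $f_x$ is proper, that its domain $\mathcal{D}(f_x) = \mathcal{D}(F(x,\cdot))$ is weakly convex, that $x \in {\rm int}\,\mathcal{D}(f_x)$, and that $f_x$ is weakly convex at $x$.

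The key steps, in order: (1) From the standing assumptions of Section 3, $F$ is proper with $0 \le F(x,x) < +\infty$ for $x \in Q$, so $x \in \mathcal{D}(F(x,\cdot))$ and hence $f_x$ is proper. (2) Hypothesis (H1) gives directly that $x \in {\rm int}\,\mathcal{D}(F(x,\cdot)) = {\rm int}\,\mathcal{D}(f_x)$, and that $F$ is point-wise weakly convex on $Q$, which by Definition \ref{monotone-bif} (c) means precisely that $f_x = F(x,\cdot)$ is weakly convex at $x$. (3) For the weak convexity of $\mathcal{D}(f_x)$, I would note that the definition of weak convexity at $x$ for $f_x$ (Definition \ref{definition-convex-f} (b)) together with $x \in {\rm int}\,\mathcal{D}(f_x)$ provides what is needed; strictly, one invokes that the subdifferential and the directional derivative $f_x'(x;\cdot)$ are well-defined once $f_x$ is weakly convex at an interior point of its domain — indeed this is exactly the setup in which $\partial f_x(x)$ was defined in Section 2. (4) Apply Lemma \ref{Convex-f-P} (ii): since $x \in {\rm int}\,\mathcal{D}(f_x)$ and $f_x$ is weakly convex at $x$, the set $\partial f_x(x) = \partial F(x,\cdot)(x) = A_F(x)$ is nonempty, compact and convex, and by \eqref{bound-subd} every $v \in \partial f_x(x)$ satisfies $\|v\| \le L_x^{f_x} = L_x^F$, which is the center Lipschitz constant of $F(x,\cdot)$ at $x$. (5) Since $x \in Q$ was arbitrary, $A_F$ is well-defined and compact convex-valued on all of $Q$ with the stated uniform-in-direction bound at each point, and $\max_{v\in A_F(x)}\|v\| \le L_x^F$ follows by taking the maximum over the compact set $A_F(x)$.

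I do not expect any genuine obstacle here — this proposition is essentially a bookkeeping consequence of the preparatory material. The only mild subtlety is making sure the domain-convexity and "at $x$" qualifiers in Lemma \ref{Convex-f-P} (ii) and in the definition of $\partial f_x(x)$ line up with what (H1) literally provides; (H1) is stated so as to furnish exactly the interiority and pointwise weak convexity needed, so the match is clean. A second minor point is identifying the constant $L_x^{f_x}$ appearing in \eqref{bound-subd} with $L_x^F$ as defined in the statement — but this is immediate from the definition of the center Lipschitz constant applied to the function $F(x,\cdot)$. Thus the proof will be short: fix $x$, verify the hypotheses of Lemma \ref{Convex-f-P} (ii) for $F(x,\cdot)$, and quote that lemma.
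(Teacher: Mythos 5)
Your proposal is correct and follows exactly the route the paper takes: the paper states that the proposition "is clear from Lemma \ref{Convex-f-P} (ii)," and your argument is precisely the pointwise application of that lemma to $f_x=F(x,\cdot)$, with (H1) supplying the interiority $x\in{\rm int}\,\mathcal{D}(F(x,\cdot))$ and the weak convexity of $F(x,\cdot)$ at $x$ (the weak convexity of the domain being part of what ``point-wise weakly convex'' presupposes via Definition \ref{definition-convex-f}), and \eqref{bound-subd} giving the bound by $L_x^F$. No discrepancy with the paper's proof.
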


The following proposition establishes the relationship between the {\rm EP}   associated to  the pair $(F,Q)$  and the {\rm VIP} associated  to the pair  $(A_F, Q)$. 







\begin{proposition}\label{relation-EP-VIP}
Suppose that $F$ satisfies {\rm (H1)} and {\rm (H2)}. Then
\begin{equation}\label{VIPinEP}
{\rm VIP}(A_F,Q)\subseteq{\rm EP}(F,Q),
\end{equation}
and the equality holds if \eqref{f00} is additionally assumed.
\end{proposition}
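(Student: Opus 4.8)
The plan is to prove the two inclusions separately, the first under hypotheses (H1) and (H2) and the second additionally under \eqref{f00}. For the inclusion \eqref{VIPinEP}, let $\bar x\in{\rm VIP}(A_F,Q)$. By definition of $A_F$ in \eqref{VIP-10} and the solution concept \eqref{VIP}, there exists $\bar v\in\partial F(\bar x,\cdot)(\bar x)$ such that $\langle\bar v,\dot\gamma_{\bar xy}(0)\rangle\ge 0$ for every $y\in Q$ and every $\gamma_{\bar xy}\in\Gamma^Q_{\bar xy}$. The idea is to feed this into the subdifferential inequality from Lemma \ref{subdifferential representation}. Here the function to work with is $f:=F(\bar x,\cdot)+\delta_Q$; by (H2) $f$ is weakly convex at $\bar x$ with weakly convex domain $\mathcal{D}(f)=\mathcal{D}(F(\bar x,\cdot))\cap Q$ (using also \eqref{A-Weak-C} so that $Q$ is weakly convex). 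By (H1) $\bar x\in{\rm int}\,\mathcal{D}(F(\bar x,\cdot))$, so Lemma \ref{Sub-Sum} gives $\partial f(\bar x)=\partial F(\bar x,\cdot)(\bar x)+\partial\delta_Q(\bar x)$; in particular $\bar v+0=\bar v\in\partial f(\bar x)$. Applying Lemma \ref{subdifferential representation}(i) to $f$ at $\bar x$ with an arbitrary radius $r$: for each $y\in\mathcal{D}(f)\cap{\bf B}(\bar x,r)$ there is $\gamma_{\bar xy}\in{\rm min\text{-}}\Gamma^f_{\bar xy}$ with $f(y)\ge f(\bar x)+\langle\bar v,\dot\gamma_{\bar xy}(0)\rangle$. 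Since $\gamma_{\bar xy}$ stays in $\mathcal{D}(f)\subseteq Q$ it is an admissible geodesic in $\Gamma^Q_{\bar xy}$, so $\langle\bar v,\dot\gamma_{\bar xy}(0)\rangle\ge 0$, whence $F(\bar x,y)=f(y)\ge f(\bar x)=F(\bar x,\bar x)\ge 0$.

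The step that needs care is passing from a local statement (points $y$ near $\bar x$) to all $y\in Q$: Lemma \ref{subdifferential representation}(i) only yields the subgradient inequality for $y\in{\bf B}(\bar x,r)$, and moreover the geodesic it produces is minimal, whereas the VIP condition is stated for \emph{all} $\gamma_{\bar xy}\in\Gamma^Q_{\bar xy}$, which is a weaker constraint only when such minimal geodesics exist in $Q$. To globalize, I would fix an arbitrary $y\in Q$ with $F(\bar x,y)<+\infty$ (if $F(\bar x,y)=+\infty$ there is nothing to prove) and run the standard convexity-along-a-geodesic argument: pick $\gamma\in{\rm min\text{-}}\Gamma^f_{\bar xy}$ (which exists because $f$ is weakly convex at $\bar x$ and $y\in\mathcal{D}(f)$), so $f\circ\gamma$ is convex on $[0,1]$; for small $t>0$ the point $\gamma(t)$ lies in ${\bf B}(\bar x,r)$ and $\dot\gamma_{\bar x\gamma(t)}(0)=t\dot\gamma(0)$, giving $F(\bar x,\gamma(t))-F(\bar x,\bar x)\ge\langle\bar v,t\dot\gamma(0)\rangle\ge 0$ by the VIP inequality (note $\gamma|_{[0,t]}$ reparametrized is an admissible geodesic in $\Gamma^Q_{\bar x\gamma(t)}$); then convexity of $f\circ\gamma$ and $f(\bar x)\le f(\gamma(t))$ for all small $t$ forces $f(\bar x)\le f(\gamma(1))=F(\bar x,y)$. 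This is essentially the ``some or any'' mechanism already built into Lemma \ref{subdifferential representation}(i), so I expect it to go through cleanly; I regard this globalization as the main (mild) obstacle.

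For the reverse inclusion under the extra assumption \eqref{f00}, let $\bar x\in{\rm EP}(F,Q)$, so $F(\bar x,y)\ge 0=F(\bar x,\bar x)$ for all $y\in Q$, i.e. $\bar x$ minimizes $f=F(\bar x,\cdot)+\delta_Q$ over $M$. By (H1)–(H2) and Lemma \ref{Convex-f-P}(ii) (applicable since $\bar x\in{\rm int}\,\mathcal{D}(F(\bar x,\cdot))$ and $f$ is weakly convex at $\bar x$), $\partial f(\bar x)$ is nonempty; the Fermat rule for weakly convex functions gives $0\in\partial f(\bar x)$. Actually I would argue directly: minimality gives $f'(\bar x;u)\ge 0$ for all $u\in T_{\bar x}M$, and since $\bar x\in{\rm int}\,\mathcal{D}(F(\bar x,\cdot))$, for $u$ pointing into $Q$ the indicator contributes nothing, so $F(\bar x,\cdot)'(\bar x;u)\ge 0$ whenever $\exp_{\bar x}(tu)\in Q$ for small $t$. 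By Lemma \ref{Sub-Sum}, $0\in\partial f(\bar x)=\partial F(\bar x,\cdot)(\bar x)+\partial\delta_Q(\bar x)$, so there is $\bar v\in\partial F(\bar x,\cdot)(\bar x)=A_F(\bar x)$ with $-\bar v\in\partial\delta_Q(\bar x)$; unwinding the definition of $\partial\delta_Q(\bar x)$ (the normal cone) via the directional-derivative characterization yields exactly $\langle\bar v,\dot\gamma_{\bar xy}(0)\rangle\ge 0$ for every $y\in Q$ and $\gamma_{\bar xy}\in\Gamma^Q_{\bar xy}$, i.e. $\bar x\in{\rm VIP}(A_F,Q)$. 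The role of \eqref{f00} is to guarantee $F(\bar x,\bar x)=0$ so that ``$F(\bar x,y)\ge 0$ for all $y\in Q$'' is genuinely the statement that $\bar x$ minimizes $F(\bar x,\cdot)$ on $Q$; without it, $F(\bar x,\bar x)$ could be strictly positive and $\bar x$ need not be such a minimizer, which is why the reverse inclusion can fail in general.
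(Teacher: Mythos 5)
Your proof is correct and follows essentially the same route as the paper's: both reduce the statement to the equivalence $\bar x\in{\rm VIP}(A_F,Q)\Longleftrightarrow F(\bar x,y)\ge F(\bar x,\bar x)$ for all $y\in Q$, which the paper obtains by invoking the argument of \cite[Proposition 6.4]{LiY2012} and which you re-derive in-house by applying Lemmas \ref{subdifferential representation} and \ref{Sub-Sum} to $F(\bar x,\cdot)+\delta_Q$ (Fermat rule plus the normal-cone characterization of $\partial\delta_Q(\bar x)$). The ``globalization'' you flag as the main obstacle is already built into the ``for some or any $r>0$'' formulation of Lemma \ref{subdifferential representation}(i), so your extra paragraph is redundant but harmless.
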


\begin{proof}
Let $\bar x\in Q$ and note that $F$ satisfies {\rm (H1)} and {\rm (H2)}. Then, by implication \eqref{A-Weak-C}, $Q$ is weakly convex, and then the same argument  for proving
  \cite[Proposition 6.4]{LiY2012} (with $F(\bar x,\cdot),\,Q$ in place of $f,\,A$ there) works for the following   equivalence:
\begin{equation}\label{eqvip}
 \bar x\in {\rm VIP}(A_F,Q)
 \Longleftrightarrow [F(\bar x,y)\ge F(\bar x,\bar x) \; \mbox{ for any } y\in Q].
\end{equation}
 Thus \eqref{VIPinEP} follows from   the assumption that $F(\bar x,\bar x)\ge0$;   while the converse inclusion of  \eqref{VIPinEP} holds trivially by \eqref{eqvip} if \eqref{f00} is additionally assumed. The proof is complete. 
%
\end{proof}

\begin{proposition}\label{P-PWF} Suppose that $F$ satisfies {\rm (H1)}. Then the following assertions hold:

 {\rm (i)} If 
 $F$ is  monotone (resp. strictly monotone)  on $Q\times Q$, then so is  $A_F$    on $Q$.

{\rm (ii)} If $F$ satisfies   {\rm (H3)} and {\rm (H4)},   then
$A_F$ is uKsc on $Q$; hence $A_F\in\mathcal{V}(Q)$.
\end{proposition}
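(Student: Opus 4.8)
\textbf{Proof plan for Proposition~\ref{P-PWF}.}
The plan is to treat the two assertions separately, both relying on the pointwise-convexity structure of $F$ together with Proposition~\ref{example}(iv) applied to a suitable choice of vector field. For assertion (i), I would fix $x,y\in Q$ and a geodesic $\gamma_{xy}\in\Gamma^Q_{xy}$, and take arbitrary $v_x\in A_F(x)=\partial F(x,\cdot)(x)$ and $v_y\in A_F(y)=\partial F(y,\cdot)(y)$. Since $F$ satisfies (H1), $F(x,\cdot)$ is weakly convex at $x$ and $x\in{\rm int}\,\mathcal D(F(x,\cdot))$, so Lemma~\ref{subdifferential representation}(i) gives a minimal geodesic realizing the subgradient inequality; but here $\gamma_{xy}$ is already prescribed, so I would instead invoke the characterization that for $v_x\in\partial F(x,\cdot)(x)$ one has $\langle v_x,\dot\gamma_{xy}(0)\rangle\le F(x,\cdot)'(x;\dot\gamma_{xy}(0))\le F(x,y)-F(x,x)$, using weak convexity of $F(x,\cdot)$ along the direction $\dot\gamma_{xy}(0)$ exactly as in the proof of Lemma~\ref{subdifferential representation}. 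Running the reversed geodesic $\gamma_{yx}(t)=\gamma_{xy}(1-t)$ gives $\langle v_y,\dot\gamma_{yx}(0)\rangle=-\langle v_y,\dot\gamma_{xy}(1)\rangle\le F(y,x)-F(y,y)$. Adding the two inequalities and using $F(x,x)=F(y,y)=0$ (which holds by the remark after Definition~\ref{monotone-bif}, since monotonicity forces \eqref{f00}) yields
\[
\langle v_x,\dot\gamma_{xy}(0)\rangle-\langle v_y,\dot\gamma_{xy}(1)\rangle\le F(x,y)+F(y,x)\le 0,
\]
with strict inequality when $x\neq y$ in the strictly monotone case; this is precisely the (strict) monotonicity of $A_F$ on $Q$.

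For assertion (ii), I would show $A_F$ is uKsc on $Q$ directly from the definition: take sequences $\{x_k\}\subset Q$ with $x_k\to x_0\in Q$ and $u_k\in A_F(x_k)=\partial F(x_k,\cdot)(x_k)$ with $u_k\to u_0$, and verify $u_0\in\partial F(x_0,\cdot)(x_0)$. The natural route is to pass to the limit in the subgradient inequality: for each $k$ and each $y$ in a fixed small ball $\mathcal D(F(x_k,\cdot))\cap{\bf B}(x_k,r)$, Lemma~\ref{subdifferential representation}(i) provides a minimal geodesic $\gamma_{x_k y}$ with $F(x_k,y)\ge F(x_k,x_k)+\langle u_k,\dot\gamma_{x_ky}(0)\rangle$. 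Fix $y\in Q$ near $x_0$; using weak convexity of $Q$ (which holds by (H1) — actually one needs $Q$ locally convex, which is a standing assumption, so for $y$ close enough to $x_0$ the minimal geodesic is unique and depends continuously on $x_k$), the initial velocities $\dot\gamma_{x_ky}(0)$ converge to $\dot\gamma_{x_0y}(0)$ by Lemma~\ref{cvg-exp}; hence $\langle u_k,\dot\gamma_{x_ky}(0)\rangle\to\langle u_0,\dot\gamma_{x_0y}(0)\rangle$. Taking $\limsup_k$ on the left and using (H3) ($x\mapsto F(x,y)$ usc) and (H4) ($x\mapsto F(x,x)$ lsc) gives
\[
F(x_0,y)\ge\limsup_{k\to\infty}F(x_k,y)\ge\liminf_{k\to\infty}F(x_k,x_k)+\langle u_0,\dot\gamma_{x_0y}(0)\rangle\ge F(x_0,x_0)+\langle u_0,\dot\gamma_{x_0y}(0)\rangle,
\]
which is the subgradient inequality characterizing $u_0\in\partial F(x_0,\cdot)(x_0)=A_F(x_0)$ by Lemma~\ref{subdifferential representation}(i) (the ``for some $r>0$'' version suffices). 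Combined with Proposition~\ref{P-PWF-0}, which already gives that $A_F$ is compact convex-valued on $Q$, this yields $A_F\in\mathcal V(Q)$.

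\textbf{Anticipated main obstacle.} The delicate point in assertion (ii) is the interchange of limits at the boundary of $\mathcal D(F(x_k,\cdot))$: the ball ${\bf B}(x_k,r)$ and the domain $\mathcal D(F(x_k,\cdot))$ both move with $k$, so one must argue that for any fixed target $y$ sufficiently close to $x_0$ one eventually has $y\in\mathcal D(F(x_k,\cdot))\cap{\bf B}(x_k,r)$ — this uses (H1)'s requirement $x\in{\rm int}\,\mathcal D(F(x,\cdot))$ but needs it to hold with a uniform radius near $x_0$, which should follow from (H3)/(H4) plus the usc/lsc bounds, or one restricts to $y$ in a neighborhood where $F(\cdot,y)$ is finite. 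The other technical care needed is the continuous dependence of the chosen minimal geodesics $\gamma_{x_ky}$ on the base point, which is where local convexity of $Q$ (uniqueness of short minimal geodesics, via the convexity radius estimate \eqref{cvx-rd}) is essential; once uniqueness is in hand, Lemma~\ref{cvg-exp} and the smooth dependence of the exponential map close the argument without further difficulty.
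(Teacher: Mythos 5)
Your argument for assertion (ii) is essentially the paper's proof: pass to the limit in the subgradient inequality at $x_k$ using Lemma \ref{cvg-exp} together with (H3) and (H4), then invoke Lemma \ref{subdifferential representation} with the ``for some $r>0$'' option. Your worry about $y$ possibly leaving $\mathcal{D}(F(x_k,\cdot))$ is harmless, since the subgradient inequality holds trivially when $F(x_k,y)=+\infty$; the paper simply works in a ball of radius $\tfrac{r_{\bf B}}{2}$ around $x_0$ so that $\exp_{x_k}^{-1}y$ is a singleton, exactly as you propose.

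Assertion (i), however, has a genuine gap. The chain $\langle v_x,\dot\gamma_{xy}(0)\rangle\le F(x,\cdot)'(x;\dot\gamma_{xy}(0))\le F(x,y)-F(x,x)$ is unjustified at the second step: that inequality needs $t\mapsto F(x,\gamma_{xy}(t))$ to be convex on $[0,1]$ for the \emph{prescribed} geodesic $\gamma_{xy}\in\Gamma^Q_{xy}$, whereas (H1) only gives \emph{weak} convexity of $F(x,\cdot)$ at $x$, i.e.\ convexity of the composition along \emph{some} minimal geodesic from $x$ to $y$ (Definition \ref{definition-convex-f}(b)). The prescribed $\gamma_{xy}$ need be neither minimal nor the geodesic selected by weak convexity, so the difference quotient $\frac{F(x,\gamma_{xy}(t))-F(x,x)}{t}$ need not be monotone in $t$ and its limit as $t\to0^+$ is not controlled by its value at $t=1$; this is precisely the kind of failure (cf.\ claim \eqref{f}) that the paper is at pains to avoid. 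The paper's proof circumvents it by subdividing $\gamma_{xy}$ into $n$ pieces of length at most $\bar r=\min\{r_z:z\in\gamma_{xy}[0,1]\}>0$ (positive by \eqref{cvx-rd}); on each piece the minimal geodesic between consecutive points $x_{i-1},x_i$ is unique, so Lemma \ref{subdifferential representation}(i) does apply along that piece, giving $F(x_{i-1},x_i)\ge\langle u_{i-1},\exp_{x_{i-1}}^{-1}x_i\rangle$ and $F(x_i,x_{i-1})\ge\langle u_i,\exp_{x_i}^{-1}x_{i-1}\rangle$ for auxiliary subgradients $u_i\in A_F(x_i)$; monotonicity of $F$ applied to each consecutive pair together with the cancellation $\exp_{x_i}^{-1}x_{i+1}+\exp_{x_i}^{-1}x_{i-1}=0$ then telescopes to $\langle u_x,\dot\gamma_{xy}(0)\rangle-\langle u_y,\dot\gamma_{xy}(1)\rangle\le0$. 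You would need this localization-and-telescoping step (or an equivalent device) to make (i) correct.
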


\begin{proof}
(i). Suppose that $F$ is monotone on $Q\times Q$.
Let $x,y\in Q$, $u_x\in A_F(x), u_y\in A_F(y)$ and let $\gamma_{xy}\in \Gamma_{xy}^Q$. We have to show
\begin{equation}\label{monotone-formular}
  \langle u_x,\dot{\gamma}_{xy}(0)\rangle-\langle u_y,\dot{\gamma}_{xy}(1)\rangle\le0.
\end{equation}
To do this, subdivide $\gamma_{xy}$ into $n$ subsegments with the equal length determined by the consecutive points
$$
x=x_0<x_1<\ldots<x_{n-1}<x_n=y
$$
such that
$$
{\rm d}(x_{i-1},x_i)=\frac{l(\gamma_{xy})}{n}\le \bar r.\quad i=1,2,\dots,n,
$$
where $\bar r:=\min\{r_z: z\in {\gamma_{xy}[0,1]}\}>0$  by \eqref{cvx-rd}.
Thus, for each $i=1,2,\dots,n$, $\exp_{x_{i-1}}^{-1}x_i$ is a singleton, and
\begin{equation}\label{gamma-x-i}
 {\rm min-}\Gamma_{x_{i-1}x_i}=\{\gamma_{x_{i-1}x_i}\} \quad\mbox{with} \quad
 \gamma_{x_{i-1}x_i}(\cdot):=\exp_{x_{i-1}}(\cdot)(\exp_{x_{i-1}}^{-1}x_i) .
\end{equation}
 Moreover, we have that
\begin{equation}\label{EP-VIP-monotone20}
\exp_{x_{0}}^{-1}x_1=\frac{1}{n}\dot{\gamma}_{xy}(0),\quad  \exp_{x_{n}}^{-1}x_{n-1}=-\frac{1}{n}\dot{\gamma}_{xy}(1),
\end{equation}
and
\begin{equation}\label{monotone-bf0}\exp_{x_{i}}^{-1}x_{i+1}+\exp_{x_{i}}^{-1}x_{i-1}=0\quad\mbox{for each }i=1,2,\dots,n-1.
\end{equation}
To proceed, set $u_0:=u_x$, $u_n:=u_y$ and  take  $u_i\in A_F(x_i)$ for each $i=1,2,\ldots, n-1$. Now fix $i=1,2,\dots,n$. Then, by assumption (H1),
 Lemma \ref{subdifferential representation} (i) is applicable, and thus,  thanks to \eqref{gamma-x-i}, we have that
$$F(x_{i-1},x_i)\ge \langle u_{i-1},\exp_{x_{i-1}}^{-1}x_{i}\rangle\quad\mbox{and}\quad F(x_i,x_{i-1})\ge \langle u_i,\exp_{x_{i}}^{-1}x_{i-1}\rangle,$$
as $F(x_i,x_i)\ge0$.
This, together with the monotonicity of $F$, implies that
  $\langle u_{i-1},\exp_{x_{i-1}}^{-1}x_{i}\rangle+\langle u_i,\exp_{x_{i}}^{-1}x_{i-1}\rangle\le0 $; hence,
\begin{equation}\label{sum-00}
\sum_{i=1}^n\left(\langle u_{i-1},\exp_{x_{i-1}}^{-1}x_{i}\rangle+\langle u_i,\exp_{x_{i}}^{-1}x_{i-1}\rangle\right)\le0.
\end{equation}
Since by \eqref{monotone-bf0}, $\langle u_{i},\exp_{x_{i}}^{-1}x_{i-1}\rangle+\langle u_{i},\exp_{x_{i}}^{-1}x_{i+1}\rangle )=0 $ for each $i=1,2,\dots,n$, and since
$$
\begin{array}{ll}
 &\langle u_{0},\exp_{x_{0}}^{-1}x_{1}\rangle+\dsum_{i=1}^{n-1}\left(\langle u_{i},\exp_{x_{i}}^{-1}x_{i-1}\rangle+\langle u_{i},\exp_{x_{i}}^{-1}x_{i+1}\rangle\right)+\langle u_{n},\exp_{x_{n}}^{-1}x_{n-1}\rangle\\
 &=\dsum_{i=1}^n\left(\langle u_{i-1},\exp_{x_{i-1}}^{-1}x_{i}\rangle+\langle u_i,\exp_{x_{i}}^{-1}x_{i-1}\rangle\right),
\end{array}
$$
it follows from \eqref{sum-00} that
 $\langle u_{0},\exp_{x_{0}}^{-1}x_{1}\rangle+\langle u_{n},\exp_{x_{n}}^{-1}x_{n-1}\rangle\le0$. Thus \eqref{monotone-formular}
  is seen to hold by    \eqref{EP-VIP-monotone20}, and the proof for assertion (i) is complete. %

(ii). Let $x_0\in Q$ and let $\{x_k\}\subset Q$, $\{u_k\}\subset TM$ with each $u_k\in A_F(x_k)$ such that
\begin{equation}\label{dfgd}
 \lim_{k\rightarrow\infty}x_k=x_0 \quad\mbox{and}\quad \lim_{k\rightarrow\infty}P_{x_0,x_k}u_k=u_0 .
\end{equation}
It suffices  to show $u_0\in A_F(x_0)$. 
To do this, write   $r_{{\bf B}}:=r_{{\bf B}(x_0,r_{x_0})}>0$ (see \eqref{cvx-rd}).   Without loss of generality, we may assume that
 $x_k\in{\bf B}(x_0,\frac{r_{{\bf B}}}{2})$ for all $k$.  Let $y\in {\bf B}(x_0,\frac{r_{{\bf B}}}{2})$. Then, for each $k$,
$${\rm d}(x_k,y)\le {\rm d}(x_0,y)+{\rm d}(x_0,x_k)\le r_{{\bf B}},$$
and so  $\exp_{x_k}^{-1}y$ is a singleton. It immediately follows from \eqref{dfgd} and Lemma \ref{cvg-exp} that
$$
\lim_{k\rightarrow\infty}\langle u_k,\exp_{x_k}^{-1}y\rangle=\langle u_0,\exp_{x_0}^{-1}y\rangle.
$$
Now, suppose hypotheses (H3) and (H4) are satisfied. Then,
\begin{equation}\label{KUC4}
F(x_0,y)\ge{\overline\lim_{k\rightarrow\infty}}F(x_k,y)\quad\mbox{and}\quad F(x_0,x_0)\le \underline{{\lim}}_{k\to\infty} F(x_k,x_k).
\end{equation}
Recalling that each  $u_k\in A_F(x_k)=\partial F(x_k,\cdot)(x_k)$, we get by (H1) that
$$
F(x_k,y)\ge F(x_k,x_k)+\langle u_k,\exp_{x_k}^{-1}y\rangle\quad\mbox{for each }k;
$$
hence
$$
\begin{array}{lll}
{\overline\lim}_{k\to\infty}F(x_k,y)&\ge&\underline{{\lim}}_{k\rightarrow\infty}(F(x_k,x_k)+\lim_{k\to \infty}\langle u_k,\exp_{x_k}^{-1}y\rangle)\\
&=& \underline{{\lim}}_{k\rightarrow\infty}F(x_k,x_k)+\langle u_0,\exp_{x_0}^{-1}y\rangle.
\end{array}
$$
This, together with   \eqref{KUC4}, yields that
$$F(x_0,y)\ge F(x_0,x_0)+\langle u_0,\exp_{x_0}^{-1}y\rangle.$$
Thus, Lemma \ref{subdifferential representation} is applicable to concluding  that $u_0\in \partial F(x_0,\cdot)(x_0)=A_F(x_0)$ as $y\in{\bf B}(x_0,\frac{r_{{\bf B}}}{2})$ is arbitrary, and the upper Kuratowski semi-continuity of  $A_F$ is proved. Furthermore,  by Proposition \ref{P-PWF-0},   $A_F(x)$ is nonempty, compact and convex for each $x\in Q$. Hence $A_F\in\mathcal{V}(Q)$.
  Thus the proof is complete.
\end{proof}

\subsection{Existence and convexity properties of the solution set}
Let $F:M\times M\rightarrow\overline{\IR}$  and  $Q\subseteq M$ satisfy
  the conditions assumed at the beginning of the present section.
We have the following existence result on the solution  of {\rm EP} associated to  the pair $(F,Q)$.

\begin{theorem}\label{Existence-EP-2} Suppose that  $Q$ contains a weak pole $o\in {\rm int}_RQ$ and that  $F$  satisfies   {\rm (H1)}-{\rm (H4)}.
  Then    ${\rm EP}(F,Q)\neq\emptyset$ provided that  $Q$ is compact, or
   assumptions   {\rm (b)} in Proposition \ref{LY-Existence-coercive}  is  satisfied with $A_F$ in place of $A$.
\end{theorem}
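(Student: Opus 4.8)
The plan is to reduce the equilibrium problem to the variational inequality problem for the associated vector field $A_F$ defined in \eqref{VIP-10}, and then invoke the existence results for the {\rm VIP} collected in Proposition \ref{LY-Existence-coercive}. First I would check that the machinery applies: since $F$ satisfies {\rm (H1)}, Proposition \ref{P-PWF-0} guarantees that $A_F$ is well-defined and compact convex-valued on $Q$; since $F$ additionally satisfies {\rm (H3)} and {\rm (H4)}, Proposition \ref{P-PWF}(ii) shows $A_F$ is uKsc on $Q$, so $A_F\in\mathcal{V}(Q)$. Note also that {\rm (H2)} forces $Q$ to be weakly convex by the implication \eqref{A-Weak-C}, and the hypothesis gives a weak pole $o\in{\rm int}_RQ$; together with the standing assumption that $Q$ is closed and locally convex, all the structural hypotheses of Proposition \ref{LY-Existence-coercive} are in place.

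Next I would apply Proposition \ref{LY-Existence-coercive} to the pair $(A_F,Q)$. In the case $Q$ compact, part {\rm (a)} of that proposition yields ${\rm VIP}(A_F,Q)\neq\emptyset$ directly. In the other case, the hypothesis is precisely that assumption {\rm (b)} of Proposition \ref{LY-Existence-coercive} holds with $A_F$ in place of $A$ (i.e.\ $Q$ has the BCC property and the coercivity-type condition \eqref{L_compact} holds for $A_F$), so again ${\rm VIP}(A_F,Q)\neq\emptyset$. Either way we obtain a point $\bar x\in{\rm VIP}(A_F,Q)$.

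Finally, to transfer this solution to the {\rm EP}, I would invoke Proposition \ref{relation-EP-VIP}: since $F$ satisfies {\rm (H1)} and {\rm (H2)}, we have the inclusion ${\rm VIP}(A_F,Q)\subseteq{\rm EP}(F,Q)$ from \eqref{VIPinEP}. Hence $\bar x\in{\rm EP}(F,Q)$, which shows ${\rm EP}(F,Q)\neq\emptyset$ and completes the proof.

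The argument is essentially a bookkeeping assembly of the earlier propositions, so there is no genuine obstacle in the theorem itself; the real work has already been done in verifying that $A_F\in\mathcal{V}(Q)$ (Proposition \ref{P-PWF}) and in establishing the ${\rm VIP}$–${\rm EP}$ equivalence (Proposition \ref{relation-EP-VIP}). The only point requiring a moment's care is making sure the hypotheses {\rm (H1)}–{\rm (H4)} are each used in the right place — {\rm (H1)} for well-definedness of $A_F$ and for the subdifferential inequality \eqref{subd-ine} underlying \eqref{eqvip}, {\rm (H2)} for weak convexity of $Q$ and hence applicability of the ${\rm VIP}$ theory, and {\rm (H3)}, {\rm (H4)} for the upper Kuratowski semicontinuity of $A_F$ — but no new estimate is needed.
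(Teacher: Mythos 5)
Your proposal is correct and follows essentially the same route as the paper: verify $A_F\in\mathcal{V}(Q)$ via Propositions \ref{P-PWF-0} and \ref{P-PWF}(ii) using {\rm (H1)}, {\rm (H3)}, {\rm (H4)}, apply Proposition \ref{LY-Existence-coercive} to get ${\rm VIP}(A_F,Q)\neq\emptyset$, and conclude via the inclusion ${\rm VIP}(A_F,Q)\subseteq{\rm EP}(F,Q)$ from Proposition \ref{relation-EP-VIP}. The accounting of where each hypothesis is used matches the paper's argument.
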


\begin{proof}
By hypotheses (H1) and (H2), we see from Proposition \ref{relation-EP-VIP} that
\begin{equation}\label{Existence-EP-20}
{\rm VIP}(A_F,Q)\subseteq{\rm EP}(F,Q).
\end{equation}
Moreover, by hypotheses (H3) and (H4), we get by Proposition \ref{P-PWF} (ii) that $A_F\in\mathcal{V}(Q)$. 
Thus, 
by assumption, Proposition \ref{LY-Existence-coercive} is applicable to getting that
$$
{\rm VIP}(A_F,Q)\neq\emptyset.
$$
The result follows immediately from \eqref{Existence-EP-20} and the proof is complete.
\end{proof}

\begin{remark}\label{LCN}
Assumption    {\rm (b)} in Proposition \ref{LY-Existence-coercive} is satisfied with $A_F$ in place of $A$ if  $Q$ has the BBC and one of the following assumptions holds {(in particular, assumption (b2) was used by Colao et al in \cite{Colao2012})}:
\begin{enumerate}
                                           \item[{\rm (b1)}]  $A_F$ satisfies the coerciveness condition on $Q$.

                                           \item[{\rm (b2)}] There exists a compact set $L\subseteq M$ such that
\begin{equation}\label{LCN-0}
      \begin{array}{l}
        x\in Q\setminus L\Rightarrow
        [\exists y\in Q\cap L\mbox{ s.t.  }F(x,y)<0].
      \end{array}
      \end{equation}

                                         \end{enumerate}
{In fact, it is clear from Remark \ref{remark-co} in the case of (b1). 
To check this for the case of (b2), let}
$L\subseteq M$, $x\in Q\setminus L$ and let  $y\in Q\cap L$ be  given by  \eqref{LCN-0} such that  $F(x,y)<0$.
Then,
\begin{equation}\label{co3.7}
F(x,y)-F(x,x)\le F(x,y)<0.
\end{equation}
  By assumption (H2) and the definition of $A_F$, we see that for any $v\in A_F(x)$, there exists a minimal geodesic $\gamma_{xy}\in{\rm min-}\Gamma^Q_{xy}$ such that $F(x,y)\ge F(x,x)+\langle v, \dot{\gamma}_{xy}(0)\rangle.$
This, together with \eqref{co3.7}, implies that $\langle v, \dot{\gamma}_{xy}(0)\rangle<0$. Hence condition \eqref{L_compact} 
is satisfied as $x\in Q\setminus L$ is arbitrary, and  the proof is  complete.
\end{remark}

The following theorem provides the convexity properties of the solution set ${\rm EP}(F,Q)$, which is a direct consequence of  Propositions \ref{relation-EP-VIP}, \ref{P-PWF} and \ref{LY-unique} (noting by (3.7) that $Q$ is weakly convex).

\begin{theorem}\label{EP-uinique}
Suppose that $F$ satisfies {\rm (H1)}-{\rm (H3)} and ${\rm EP}(F,Q)\not=\emptyset$.
 Suppose further that   $A_F$   is monotone on $Q$  with \eqref{f00} (e.g.,  $F$ is monotone on $Q\times Q$). Then the following assertions hold:

 {\rm (i)} The solution set ${\rm EP}(F,Q)$ is locally convex.

 {\rm (ii)} If  $A_F$  is strictly monotone on $Q$ (e.g., $F$ is strictly monotone on $Q\times Q$),  then ${\rm EP}(F,Q)$ is a singleton.

 {\rm (iii)}  If $M$ is of the sectional curvatures bounded above by   $\kappa>0$, then ${\rm EP}(F,Q)$ is $D_\kappa$-convex.
\end{theorem}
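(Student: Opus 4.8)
The plan is to derive Theorem \ref{EP-uinique} entirely by reduction to the corresponding structural results for the VIP, i.e., Proposition \ref{LY-unique}, via the bifunction-to-vector-field correspondence $F\mapsto A_F$ established in Propositions \ref{relation-EP-VIP} and \ref{P-PWF}. First I would record the two facts that make this reduction legitimate: hypothesis (H2) forces $Q$ to be weakly convex by implication \eqref{A-Weak-C} (this is the parenthetical remark in the statement, and it is needed because the VIP structural results presuppose various convexity properties of $Q$), and hypotheses (H1), (H3), (H4) give $A_F\in\mathcal{V}(Q)$ by Proposition \ref{P-PWF}(ii), so that $A_F$ is an admissible vector field for Proposition \ref{LY-unique}.

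Next I would establish the key set identity ${\rm EP}(F,Q)={\rm VIP}(A_F,Q)$. By Proposition \ref{relation-EP-VIP} we always have ${\rm VIP}(A_F,Q)\subseteq{\rm EP}(F,Q)$, and the reverse inclusion holds once \eqref{f00} is assumed; since the hypothesis of the theorem posits $A_F$ monotone on $Q$ together with \eqref{f00}, the two solution sets coincide. (If one prefers to start from ``$F$ monotone on $Q\times Q$'', then Proposition \ref{P-PWF}(i) yields $A_F$ monotone on $Q$, and \eqref{f00} follows automatically as noted after Definition \ref{monotone-bif}; so the parenthetical sufficient conditions are indeed covered.) With this identity in hand, and with ${\rm EP}(F,Q)\neq\emptyset$ by assumption hence ${\rm VIP}(A_F,Q)\neq\emptyset$, every assertion is immediate: for (i), $Q$ is locally convex by the standing assumption of Section 3, so Proposition \ref{LY-unique}(i) gives that ${\rm VIP}(A_F,Q)$, and therefore ${\rm EP}(F,Q)$, is locally convex; for (ii), if $A_F$ is strictly monotone on $Q$ (which follows from strict monotonicity of $F$ by Proposition \ref{P-PWF}(i)), Proposition \ref{LY-unique}(ii) forces ${\rm VIP}(A_F,Q)$ to be a singleton, hence so is ${\rm EP}(F,Q)$; for (iii), when $M$ has sectional curvature bounded above by $\kappa>0$ and $Q$ is $D_\kappa$-convex, Proposition \ref{LY-unique}(iii) gives $D_\kappa$-convexity of ${\rm VIP}(A_F,Q)={\rm EP}(F,Q)$.

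There is essentially no serious obstacle here, since all the analytic work has been front-loaded into the earlier propositions; the ``proof'' is a bookkeeping exercise in verifying that the hypotheses of Propositions \ref{P-PWF} and \ref{LY-unique} are met. The one place to be slightly careful is the implicit assumption in assertion (iii) that $Q$ is $D_\kappa$-convex — the theorem as stated only says ``$M$ is of sectional curvature bounded above by $\kappa>0$'', so in writing the proof I would either note that the standing local convexity of $Q$ together with a diameter/curvature bound supplies $D_\kappa$-convexity, or simply read ``$Q$ is $D_\kappa$-convex'' into the hypothesis as Proposition \ref{LY-unique}(iii) requires; either way the deduction is one line. The whole proof is therefore of the form: invoke \eqref{A-Weak-C} for weak convexity of $Q$; invoke Proposition \ref{P-PWF} to place $A_F$ in $\mathcal{V}(Q)$ and to transfer (strict) monotonicity; invoke Proposition \ref{relation-EP-VIP} with \eqref{f00} to identify the two solution sets; and apply the three parts of Proposition \ref{LY-unique} in turn.
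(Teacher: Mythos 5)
Your proposal is correct and is essentially the paper's own argument: the authors dispose of this theorem in one sentence as a direct consequence of Propositions \ref{relation-EP-VIP}, \ref{P-PWF} and \ref{LY-unique}, noting via \eqref{A-Weak-C} that $Q$ is weakly convex, exactly as you do. The only detail worth adding is that (H4), which you invoke to get $A_F\in\mathcal{V}(Q)$ from Proposition \ref{P-PWF}(ii), is not listed among the theorem's hypotheses but follows automatically from \eqref{f00} since $x\mapsto F(x,x)\equiv 0$ is continuous.
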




In particular, in the case when $M$ is a Hadamard manifold, hypothesis (H1) implies (H2), and every convex subset has both weak poles and the BCC property.
Thus   the following corollary  is immediate from Theorems \ref{Existence-EP-2} and \ref{EP-uinique}.

\begin{corollary}\label{Existence-EP-uni-3-H}
Let $M$ be a Hadamard manifold. Suppose that $F$ satisfies {\rm (H1)} and {\rm (H3)}.  Then the following assertions hold:

{\rm (i)} If {\rm (H4)}   holds, then ${\rm EP}(F,Q)\neq\emptyset$ provided that  $Q$ is  compact,  or one of 
{\rm (b1)} and {\rm (b2)} in Remark
\ref{LCN} holds.

{\rm (ii)} If $F$ is monotone on $Q\times Q$ with ${\rm EP}(F,Q)\neq\emptyset$, then ${\rm EP}(F,Q)$ is convex.

\end{corollary}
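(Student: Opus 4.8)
The plan is to read the corollary off Theorems \ref{Existence-EP-2} and \ref{EP-uinique}, after recording the simplifications that occur when $M$ is a Hadamard manifold. Three facts are used throughout: (1) in a Hadamard manifold every geodesic is \emph{the} unique minimal geodesic between its endpoints, so the closed locally convex set $Q$ is convex (in assertion (i) its connectedness, hence convexity, is moreover forced by the presence of a weak pole), and consequently, for each $x\in Q$, the minimal geodesic from $x$ to any $y\in\mathcal{D}(F(x,\cdot))\cap Q$ is exactly the one furnished by the weak convexity of $F(x,\cdot)$ at $x$ in {\rm (H1)} and it already lies in $Q$; thus {\rm (H1)} $\Rightarrow$ {\rm (H2)}; (2) every point of $Q$ is then a weak pole and ${\rm int}_RQ\neq\emptyset$, so the hypothesis $o\in{\rm int}_RQ$ of Theorem \ref{Existence-EP-2} is met; (3) a convex subset of a Hadamard manifold has the BCC property (as noted before the corollary).

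For assertion (i): assume {\rm (H1)}, {\rm (H3)}, {\rm (H4)}. By fact (1), {\rm (H2)} holds, so $F$ satisfies {\rm (H1)}--{\rm (H4)}; by fact (2), $Q$ contains a weak pole in ${\rm int}_RQ$. If $Q$ is compact, Theorem \ref{Existence-EP-2} applies directly and ${\rm EP}(F,Q)\neq\emptyset$. Otherwise, if (b1) or (b2) of Remark \ref{LCN} holds, then, since by fact (3) $Q$ has the BCC property, Remark \ref{LCN} shows that assumption {\rm (b)} of Proposition \ref{LY-Existence-coercive} holds with $A_F$ in place of $A$; Theorem \ref{Existence-EP-2} again gives ${\rm EP}(F,Q)\neq\emptyset$.

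For assertion (ii): assume $F$ is monotone on $Q\times Q$ and ${\rm EP}(F,Q)\neq\emptyset$. Monotonicity together with $F(x,x)\ge0$ on $Q$ forces $F(x,x)=0$ on $Q$, so both {\rm (H4)} and \eqref{f00} hold, and by Proposition \ref{P-PWF}(i) the vector field $A_F$ is monotone on $Q$. With {\rm (H1)}, {\rm (H2)} (fact (1)) and {\rm (H3)} in hand, Theorem \ref{EP-uinique} applies. To upgrade local convexity to convexity I would invoke Theorem \ref{EP-uinique}(iii) with an arbitrary $\kappa>0$ (legitimate, since the sectional curvatures of $M$ are $\le 0<\kappa$): this yields that ${\rm EP}(F,Q)$ is $D_\kappa$-convex for every $\kappa>0$, and since $D_\kappa=\pi/\sqrt{\kappa}\to+\infty$ as $\kappa\downarrow 0$, any two points of ${\rm EP}(F,Q)$ are joined within ${\rm EP}(F,Q)$ by the unique minimal geodesic of $M$; that is, ${\rm EP}(F,Q)$ is convex. (Alternatively, one may use ${\rm EP}(F,Q)={\rm VIP}(A_F,Q)$ from Proposition \ref{relation-EP-VIP} and apply Proposition \ref{LY-unique}(iii) with $\kappa=0$, noting $Q$ is $D_0$-convex.)

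The only genuinely delicate point is the implication {\rm (H1)} $\Rightarrow$ {\rm (H2)}: one must check that the minimal geodesic $\gamma_{xy}$ produced by the weak convexity of $F(x,\cdot)$ at $x$ can be taken inside $Q$ — which is precisely where convexity of $Q$ and uniqueness of minimal geodesics in the Hadamard setting enter — and that it still lies in $\mathcal{D}(F(x,\cdot))$, which is immediate from $\gamma_{xy}\in{\rm min-}\Gamma^{F(x,\cdot)}_{xy}$. Everything else is bookkeeping built on top of the already-established Theorems \ref{Existence-EP-2} and \ref{EP-uinique}.
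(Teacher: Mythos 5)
Your proof is correct and follows essentially the same route as the paper, which derives the corollary as an immediate consequence of Theorems \ref{Existence-EP-2} and \ref{EP-uinique} after noting that in a Hadamard manifold {\rm (H1)} implies {\rm (H2)} and every convex subset has a weak pole and the BCC property. The only difference is that you make explicit the details the paper leaves implicit — in particular the verification of {\rm (H1)} $\Rightarrow$ {\rm (H2)} via uniqueness of minimal geodesics, and the passage from $D_\kappa$-convexity for every $\kappa>0$ to convexity by letting $\kappa\downarrow 0$ (or, equivalently, applying Proposition \ref{LY-unique}(iii) with $\kappa=0$) — both of which are sound.
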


\begin{remark}\label{remark-colao-c}
Assertion {\rm (ii)}  in Corollary \ref{Existence-EP-uni-3-H} seems new even in the Hadamard manifold setting; while assertion {\rm (i)} was established in \cite[Theorem 3.2]{Colao2012} under the following assumptions:

\begin{description}
  \item[{\rm (c1)}]  there exists a compact set $L\subseteq M$ and  $y_0\in Q\cap L$
  such that
 $F(x,y_0)<0$   $\forall x\in Q\setminus L$;

  \item[{\rm (c2)}] the set $\{y\in Q:F(x,y)<0\}$ is convex for each $x\in Q$.
\end{description}

Clearly assumption {\rm (c1)} implies our assumption {\rm (b2)} in Remark  \ref{LCN}. Moreover,  as will be seen  in the application to the  proximal point algorithm in the next section  and to the mixed variational inequalities in Subsection 5.2, assumption (c2) is not satisfied, in general (thus \cite[Theorem 3.2]{Colao2012} is not applicable); while Corollary \ref{Existence-EP-uni-3-H} is applicable because our assumptions {\rm (H1)} and {\rm (H2)} presented here are satisfied there.
\end{remark}

\section{Resolvent and proximal point algorithm for EP}
As in the previous section, we always assume that  $F:M\times M\rightarrow\overline{\IR}$ and $Q\subseteq M$ satisfy the conditions assumed at the beginning of Section 3. 
Recall the equilibrium problem   is defines by \eqref{EP} 
and its solution set is denoted by ${\rm EP}(F,Q)$.
{The aim of this section is to introduce the resolvent and the proximal point algorithm for EP \eqref{EP} on general manifolds and show convergence of this algorithm. The applications of the proximal point method to solve many different problems in the Riemannian context could be fond in  e.g., \cite{Li2009,LiY2012,Bacak2013,Ferreira2002}.

Fix  $z\in M$} and define 
the bifunction $G_z:M\times M\rightarrow\overline{\IR}$ by
$$
G_z(x,y):=\sup_{u\in \exp_x^{-1}z,v\in \exp_x^{-1}y}\langle -u,v\rangle_x\quad\mbox{for any }(x,y)\in M\times M.
$$
In the following definition, we extend the notion of the resolvent defined in \cite[definition 4.6]{Colao2012} for the bifunction $F$  on  Hadamard manifolds to the general manifold setting. Let $\lambda>0$.
\begin{definition}\label{resolvent}
  The resolvent   $J_{\lambda}^F:M\rightrightarrows Q$ of $F$ is defined    by
\begin{equation}\label{J_F-def}
J_{\lambda}^F(z):={\rm EP}(F_{\lambda,z},Q)\quad\mbox{for any } z\in M,
\end{equation}
where the bifunction $F_{\lambda,z}:M\times M\rightarrow\overline \IR$ is defined as
$$
F_{\lambda,z}(x,y):=\lambda F(x,y)+G_z(x,y)\quad\mbox{for any } (x,y)\in M\times M.
$$
\end{definition}

For the remainder, we always assume that $M$ is of the sectional curvature bounded above by $
\kappa\ge0$. Recall that $D_{\kappa}=\frac{\pi}{\sqrt{\kappa}}$ if $\kappa>0$ and $D_{\kappa}=+\infty$
if $\kappa=0$. Then, for any $z\in M$,  ${\bf B}(z,\frac{D_\kappa}{4})$ is strongly convex (see, e.g., \cite[p. 169]{Sakai1996}), and so
$\exp_{(\cdot)}^{-1}z$ is a singleton on ${\bf B}(z,\frac{D_\kappa}{4})$.

Recall that  $A_F$ is the set valued vector field associated to the bifunction $F$ (see \eqref{VIP-10}).
Following \cite{LiY2012}, the resolvent  $J_{\lambda}^{A_F}:M\rightrightarrows Q$ of ${A_F}$ is defined by
\begin{equation}\label{J_A_F_def}
J_{\lambda}^{A_F}(z):={\rm VIP}(A^F_{\lambda,z},Q) \;\;\;\; \mbox{for any } z\in M,
\end{equation}
where $A^F_{\lambda,z}:M\rightrightarrows {TM}$ is defined by
$$
A^F_{\lambda,z}(x):=\lambda A_F(x)-E^Q_z(x)\;\;\;\;\mbox{for any } x\in Q,
$$
with the set-valued vector field $E^Q_z:M\rightrightarrows{TM}$ defined by
$$
E^Q_z(x):=\{u\in \exp_{x}^{-1}z:\exp_xtu\in Q\;\;\forall t\in[0,1]\}\quad\mbox{for any } x\in Q
$$

The  following theorem provides an estimate for the domain of  the resolvent  $J_\lambda^F$. Recall that $L^F_{z}$ denotes
 the center Lipschitz constant of $F(z,\cdot)$ at $z\in M$.  Set
 \begin{equation}\label{D-definition}
  \mathcal{D}_\lambda^F:=\{z\in Q:\lambda L^F_{z}<\frac{D_{\kappa}}{4}\}.
 \end{equation}

\begin{theorem}\label{APA-EQ}
Suppose that $F$ satisfies hypotheses {\rm (H1)}-{\rm (H3)} and  is monotone on $Q\times Q$. 
Then,

{\rm (i)} $\lambda {\rm d}(0,A_F(z))< \frac{D_{\kappa}}{4}$  for each $z\in \mathcal{D}_\lambda^F$.

{\rm (ii)} $\mathcal{D}_\lambda^F\subseteq \mathcal{D}(J_\lambda^F)$.

{\rm (iii)} $J_\lambda^F(z)\cap{\bf B}(z,\frac{D_\kappa}{4})= J_{\lambda}^{A_F}(z)$ is a singleton for each $z\in \mathcal{D}_\lambda^F$.

\end{theorem}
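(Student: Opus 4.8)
The plan is to reduce everything to the variational-inequality resolvent $J_\lambda^{A_F}$ and the existence machinery of Section~3, exploiting the definition of $F_{\lambda,z}$ as the sum $\lambda F + G_z$. First I would record the ingredient needed for the center Lipschitz estimate: since $F(z,\cdot)$ is weakly convex at $z$ with $z\in{\rm int}\,\mathcal D(F(z,\cdot))$ by {\rm (H1)}, Lemma~\ref{Convex-f-P}(ii) together with \eqref{bound-subd} gives $\|v\|\le L^F_z$ for every $v\in\partial F(z,\cdot)(z)=A_F(z)$, hence ${\rm d}(0,A_F(z))\le L^F_z$. Thus for $z\in\mathcal D_\lambda^F$ we get $\lambda\,{\rm d}(0,A_F(z))\le\lambda L^F_z<\frac{D_\kappa}{4}$, which is assertion~(i).

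Next, for assertion~(iii), the heart of the matter is the identity $J_\lambda^F(z)\cap{\bf B}(z,\frac{D_\kappa}{4})=J_\lambda^{A_F}(z)$. The natural route is to verify that $A_{F_{\lambda,z}}=A^F_{\lambda,z}$ holds when restricted to ${\bf B}(z,\frac{D_\kappa}{4})$, where $A_{F_{\lambda,z}}(x)=\partial F_{\lambda,z}(x,\cdot)(x)$. Indeed on ${\bf B}(z,\frac{D_\kappa}{4})$ the field $\exp_{(\cdot)}^{-1}z$ is single-valued, so $G_z=G_V$ with $V=-\exp_{(\cdot)}^{-1}z$ (up to sign), and Proposition~\ref{example}(iv) gives $\partial G_z(x,\cdot)(x)=\overline{\rm co}\{-\exp_x^{-1}z\}=\{-\exp_x^{-1}z\}=-E^Q_z(x)$ on the relevant set; here one must note $E^Q_z(x)=\exp_x^{-1}z$ near $z$ because the minimal geodesic from $x$ to $z$ stays inside the strongly convex ball ${\bf B}(z,\frac{D_\kappa}{4})$ and hence inside $Q$ once $x$ is in $Q$ close enough. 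Then the sum rule Lemma~\ref{Sub-Sum} — applicable because $\lambda F(x,\cdot)$, $G_z(x,\cdot)$ and their sum are all weakly convex at $x$ by {\rm (H1)}, Proposition~\ref{example}(ii)--(iii) — yields $A_{F_{\lambda,z}}(x)=\lambda A_F(x)-E^Q_z(x)=A^F_{\lambda,z}(x)$. Applying Proposition~\ref{relation-EP-VIP} to the pair $(F_{\lambda,z},Q)$ (its hypotheses {\rm (H1)}, {\rm (H2)} follow from those on $F$ plus Proposition~\ref{example}, and the diagonal value $F_{\lambda,z}(x,x)=0$ gives \eqref{f00}) then gives ${\rm EP}(F_{\lambda,z},Q)={\rm VIP}(A_{F_{\lambda,z}},Q)$; intersecting with ${\bf B}(z,\frac{D_\kappa}{4})$ and using the just-proved identity of vector fields turns this into $J_\lambda^F(z)\cap{\bf B}(z,\frac{D_\kappa}{4})=J_\lambda^{A_F}(z)$. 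That the latter is a singleton should follow from the corresponding result of \cite{LiY2012} on the VIP-resolvent, using the strict monotonicity induced on $A^F_{\lambda,z}$ by the strongly convex distance term $G_z$ together with $A_F$ monotone (Proposition~\ref{P-PWF}(i)), whence Proposition~\ref{LY-unique}(ii) applies.

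For assertion~(ii), it then suffices to show $J_\lambda^{A_F}(z)\ne\emptyset$ for $z\in\mathcal D_\lambda^F$, since by~(iii) this set is contained in $J_\lambda^F(z)$. I would run the existence Theorem~\ref{Existence-EP-2} (equivalently Proposition~\ref{LY-Existence-coercive}) on $(A^F_{\lambda,z},Q)$, or invoke the VIP-resolvent domain estimate from \cite{LiY2012} directly: the point is that the coerciveness needed for \eqref{L_compact} is forced by the term $-E^Q_z(x)$, whose inner product with $\dot\gamma_{xy}(0)$ behaves like $-{\rm d}(x,z)$ for $y$ near $z$, dominating the bounded term $\lambda A_F$ once $\lambda L^F_z<\frac{D_\kappa}{4}$; the solution is then trapped inside ${\bf B}(z,\frac{D_\kappa}{4})$, which is where the single-valuedness used in~(iii) is legitimate.

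The main obstacle I anticipate is the careful bookkeeping near the boundary of ${\bf B}(z,\frac{D_\kappa}{4})$: one must simultaneously ensure that $\exp_{(\cdot)}^{-1}z$ is single-valued, that $E^Q_z(x)$ genuinely equals $\exp_x^{-1}z$ (i.e.\ the relevant minimal geodesic lies in $Q$), that the weak-convexity/sum-rule hypotheses hold at each such $x$, \emph{and} that any VIP solution produced by the existence argument actually lands strictly inside the ball rather than on its boundary — so that the identity in~(iii) is not vacuous. This is precisely the step where the quantitative condition $\lambda L^F_z<\frac{D_\kappa}{4}$ (rather than $\le$) is used, via the bound $\lambda\,{\rm d}(0,A_F(z))<\frac{D_\kappa}{4}$ from~(i), which controls how far a candidate solution can be pushed away from $z$.
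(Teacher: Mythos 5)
Your proposal follows essentially the same route as the paper's proof: assertion (i) via the subdifferential bound $\|v\|\le L_z^F$ from Lemma \ref{Convex-f-P}(ii); the sum rule (Lemma \ref{Sub-Sum}) together with Proposition \ref{example}(iv) to identify $A_{F_{\lambda,z}}(x)=\lambda A_F(x)-\overline{{\rm co}}E^Q_z(x)$, which coincides with $A^F_{\lambda,z}(x)$ where $\exp_x^{-1}z$ is a singleton; Proposition \ref{relation-EP-VIP} to pass from ${\rm EP}(F_{\lambda,z},Q)$ to ${\rm VIP}(A_{F_{\lambda,z}},Q)$; and the results of \cite{LiY2012} on the resolvent $J_\lambda^{A_F}$ (the paper cites Lemma 4.3 and Corollary 5.4 there) for existence, uniqueness and the localization $J_\lambda^{A_F}(z)\subseteq{\bf B}(z,\frac{D_\kappa}{4})$. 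This is correct, and it is the paper's argument.

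Two of the self-contained fallback justifications you sketch would not survive on a general manifold with $\kappa>0$, so you should commit to the direct citation rather than these alternatives. First, for the singleton: the field $-E^Q_z$ is strictly monotone only locally near $z$ (roughly within $D_\kappa/4$), not on all of $Q$, so Proposition \ref{LY-unique}(ii) cannot be applied globally to $A^F_{\lambda,z}$; the localized uniqueness is precisely what \cite{LiY2012} supplies. Second, for existence: the hypothesis $\lambda L_z^F<\frac{D_\kappa}{4}$ bounds $A_F$ only at the single point $z$, not uniformly on $Q$, so the coercivity of $A^F_{\lambda,z}$ is not a matter of ``$-E^Q_z$ dominating a bounded term''; one must use the monotonicity of $A_F$ to transfer the bound at $z$ to an estimate at general $x$, which again is packaged in the cited lemmas. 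With those caveats, the decomposition, the use of the sum rule, and the intersection-with-the-ball argument for (iii) all match the paper.
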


\begin{proof}
Note that $Q$ is weakly convex by implication \eqref{A-Weak-C}. Moreover, by assumptions made for $F$, one sees by  Propositions \ref{P-PWF-0} and \ref{P-PWF} that $A_F\in\mathcal{V}(Q)$   is monotone, and
$${\rm d}(0,A_F(z))\le  L_z^F\quad \mbox{for each }z\in Q.$$
Thus  assertion (i) follows from the definition of $\mathcal{D}_\lambda^F$ in \eqref{D-definition}.
 Below we show assertions (ii) and (iii). To do this,  let $z\in\mathcal{D}_\lambda^F$. Then, $\lambda {\rm d}(0,A_F(z))< \frac{D_{\kappa}}{4}$ by
 (i), and it follows from  \cite[Lemma 4.3 and Corollary 5.4]{LiY2012} (applied to $A_F,\, Q$ in place of $V,\,A$ there) that
\begin{equation}\label{LY-APA-B-10}
\mbox{ $J^{A_F}_\lambda(z)$ is a singleton  and }  J^{A_F}_\lambda(z)\subseteq{\bf B}(z,\frac{D_\kappa}{4}).
\end{equation}
Moreover, thanks to hypotheses (H1) and  (H2), we have by Proposition \ref{example} (i) and (iii)
that  the bifunction $F_{\lambda,z}=\lambda F +G_z $ satisfies 
hypotheses (H1) and (H2) (with $F_{\lambda,z}$ in place of $F$). Furthermore, for any $x\in Q$, $F(x,x)=0$ by
the monotonicity of $F$ and $G_z(x,x)=0$ by of  Proposition \ref{example} (i); hence
$F_{\lambda,z}(x,x)=0$.
Thus one can apply Proposition \ref{relation-EP-VIP} to get  that
\begin{equation}\label{subdiff-EQ-0}
{\rm EP}(F_{\lambda,z},Q)={\rm VIP}(A_{F_{\lambda,z}},Q).
\end{equation}
Noting that $\mathcal{D}(G_{z}(x,\cdot))=M$ by Proposition \ref{example} (i), we see from Lemma \ref{Sub-Sum} and Proposition \ref{example} (iv) that, for any $x\in Q$,
\begin{equation}\label{subdiff-EQ}
A_{F_{\lambda,z}}(x):=\partial (\lambda F(x,\cdot) + G_{z}(x,\cdot))(x)=\lambda A_F(x)-\overline{{\rm co}}E^Q_{z}(x).
\end{equation}
 Hence %
$A^F_{\lambda,z}(x)\subseteq A_{F_{\lambda,z}}(x)$  for any  $ x\in Q$, and then ${\rm VIP}(A_{\lambda,z}^F, Q)\subseteq {\rm VIP}(A_{F_{\lambda,z}},Q)$.
By defintion (see \eqref{J_F-def} and \eqref{J_A_F_def}) and  \eqref{subdiff-EQ-0}, it follows that
\begin{equation}\label{LYp-APA-B-1-0}
J^{A_F}_\lambda(z)={\rm VIP}(A_{\lambda,z}^F, Q) \subseteq {\rm VIP}(A_{F_{\lambda,z}},Q)={\rm EP}(F_{\lambda,z},Q)=J_{\lambda}^F(z).
\end{equation}
In light of \eqref{LY-APA-B-10}, we see that $J_{\lambda}^F(z)\neq\emptyset$, and so assertion (ii) holds as $z\in\mathcal{D}_\lambda^F$ is arbitrary.
To show assertion (iii),  note that $A^F_{\lambda,z}(x)= A_{F_{\lambda,z}}(x)$  if ${\rm d}(x, z)<D_\kappa$ by \eqref{subdiff-EQ}. It follows from \eqref{LYp-APA-B-1-0} that
$$J^{A_F}_\lambda(z)\cap{{\bf B}(z,\frac{D_\kappa}{4})}=J_{\lambda}^F(z)\cap{{\bf B}(z,\frac{D_\kappa}{4})}.$$
This, together with \eqref{LY-APA-B-10}, implies that  $J_\lambda^F(z)\cap{\bf B}(z,\frac{D_\kappa}{4})= J_{\lambda}^{A_F}(z)$ is a singleton, and so
 assertion (iii) holds. The proof is complete.
\end{proof}

The following theorem provides sufficient conditions for $\mathcal{D}(J_{\lambda}^F)=M$. In particular, in the Hadamard manifold setting,
this result was claimed in \cite[Theorem 4.9]{Colao2012} under  the additional assumption  (c1) in Remark \ref{remark-colao-c}
but the proof presented there is not correct.

\begin{theorem}\label{resolvent-FD-well-M}
Suppose that $F$ satisfies hypotheses {\rm (H1)}-{\rm (H3)} and  is monotone on $Q\times Q$. Then,  $\mathcal{D}(J_{\lambda}^F)=M$ provided that one of the following assumptions holds:

{\rm (a)}  $Q$ is compact and contains a weak pole $o\in {\rm int}_RQ$;

{\rm (b)}   $M$ is a Hadamard manifold.
\end{theorem}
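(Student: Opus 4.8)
The plan is to reduce both cases to Theorem \ref{APA-EQ}(ii), i.e. to the inclusion $\mathcal D_\lambda^F\subseteq \mathcal D(J_\lambda^F)$, by showing that an arbitrary $z\in M$ can be forced into (a rescaled copy of) $\mathcal D_\lambda^F$, or more directly by applying the existence machinery of Theorem \ref{Existence-EP-2} to the perturbed bifunction $F_{\lambda,z}$. The key observation is that, for every $z$, the bifunction $F_{\lambda,z}=\lambda F+G_z$ inherits hypotheses (H1)--(H3) from $F$ (using Proposition \ref{example}(i),(iii),(v) for the $G_z$ part together with Lemma \ref{Sub-Sum} for the subdifferential sum rule), and satisfies $F_{\lambda,z}(x,x)=0$ on $Q$ since $F$ is monotone and $G_z(x,x)=0$; in particular (H4) holds automatically. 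So $J_\lambda^F(z)={\rm EP}(F_{\lambda,z},Q)\ne\emptyset$ will follow from Theorem \ref{Existence-EP-2} once we verify the geometric side conditions: existence of a weak pole $o\in{\rm int}_RQ$, the BCC property, and the coercivity/compactness condition (b) of Proposition \ref{LY-Existence-coercive} applied to $A_{F_{\lambda,z}}$.

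\medskip
\noindent\textbf{Case (a): $Q$ compact with weak pole $o\in{\rm int}_RQ$.} Here the geometry is free: $Q$ compact gives alternative (a) of Proposition \ref{LY-Existence-coercive} directly, and no coercivity estimate is needed. Thus I would simply note that $F_{\lambda,z}$ satisfies (H1)--(H4) for every $z\in M$, apply Theorem \ref{Existence-EP-2} with $F_{\lambda,z}$ in place of $F$ (the weak pole and ${\rm int}_RQ$ hypotheses are on $Q$ alone and are assumed), and conclude ${\rm EP}(F_{\lambda,z},Q)\ne\emptyset$, i.e. $z\in\mathcal D(J_\lambda^F)$. Since $z$ was arbitrary, $\mathcal D(J_\lambda^F)=M$.

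\medskip
\noindent\textbf{Case (b): $M$ Hadamard.} Now $Q$ need not be compact, so the work is the coercivity condition. The natural route is to use the growth of $G_z$: for $x\in Q$ far from $z$, the term $G_z(x,y)=\langle-\exp_x^{-1}z,\exp_x^{-1}y\rangle$ contributes, via $A_{F_{\lambda,z}}(x)=\lambda A_F(x)-\overline{{\rm co}}\,E_z^Q(x)$ (as in \eqref{subdiff-EQ}), a vector of norm $\approx {\rm d}(x,z)$ pointing "toward $z$", which dominates the bounded-on-bounded-sets contribution $\lambda A_F(x)$; this is precisely how the resolvent of a monotone vector field is shown to be everywhere defined on Hadamard manifolds in \cite{LiY2012}. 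Concretely I would either (i) invoke Theorem \ref{APA-EQ}(ii) after rescaling: replace $\lambda$ by a small $\lambda'>0$ so that $z\in\mathcal D_{\lambda'}^F$ (possible since $L_z^F<\infty$ by Proposition \ref{P-PWF-0} once we check $z\in{\rm int}\mathcal D(F(z,\cdot))$ — but (H1) gives exactly this), obtaining $J_{\lambda'}^F(z)\ne\emptyset$, and then run a one-step argument passing from $\lambda'$ to $\lambda$; or, more cleanly, (ii) verify condition (b2) of Remark \ref{LCN} for $F_{\lambda,z}$ directly: take $L=\overline{{\bf B}(z,\rho)}$ with $\rho$ large; for $x\notin L$, pick $y=z\in Q\cap L$ and estimate $F_{\lambda,z}(x,z)=\lambda F(x,z)+G_z(x,z)$, where $G_z(x,z)=-{\rm d}(x,z)^2<0$ and, by monotonicity, $\lambda F(x,z)\le -\lambda F(z,x)\le \lambda L_z^F{\rm d}(x,z)$ on the ball ${\bf B}(z,r)$ where $F(z,\cdot)$ is center-Lipschitz — one must be a bit careful if $x$ is outside that ball, but the quadratic term $-{\rm d}(x,z)^2$ wins over any linear bound for $\rho$ large enough. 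Since in a Hadamard manifold every convex set has a weak pole and the BCC property, Remark \ref{LCN} then places us in alternative (b) of Proposition \ref{LY-Existence-coercive}, and Theorem \ref{Existence-EP-2} gives ${\rm EP}(F_{\lambda,z},Q)\ne\emptyset$.

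\medskip
\noindent\textbf{Main obstacle.} The delicate point is the coercivity estimate in Case (b): controlling $F(x,z)$ for $x$ far from $z$. Monotonicity only gives $F(x,z)\le -F(z,x)$, and the center-Lipschitz bound on $F(z,\cdot)$ is a priori local to a neighbourhood of $z$, so one cannot blithely claim $-F(z,x)\le L_z^F\,{\rm d}(x,z)$ globally. The honest fix is that $G_z(x,z)=-{\rm d}(x,z)^2$ grows quadratically, so even a crude (e.g. only lower-semicontinuity-based, or worst-case $-\infty$) handling of $F(x,z)$ suffices once one observes that $F(x,z)$ cannot be $+\infty$ when needed (it lies in $\mathcal D(F)$ along the relevant geodesics by (H1)) — or, safest of all, sidestep the issue entirely by the rescaling argument (i) above, which only uses the finiteness of $L_z^F$ at the single point $z$, guaranteed by (H1) and Proposition \ref{P-PWF-0}, plus Theorem \ref{APA-EQ}(ii). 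I would present the rescaling route as the clean proof and relegate the direct (b2)-verification to a remark.
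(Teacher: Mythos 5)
Your Case (a) is correct and coincides with the paper's argument: check that $F_{\lambda,z}$ inherits (H1)--(H4) and invoke Theorem \ref{Existence-EP-2} under compactness. Your Case (b) also starts on the right track (verify condition (b2) of Remark \ref{LCN} for $F_{\lambda,z}$), but both of the routes you propose have a genuine gap. The most basic one: the theorem asserts $\mathcal{D}(J_\lambda^F)=M$, so $z$ is an \emph{arbitrary} point of $M$, not necessarily in $Q$. Your route (ii) anchors the compact set at $z$ and picks the comparison point $y=z\in Q\cap L$, which is only legitimate when $z\in Q$; and your route (i) rests on Theorem \ref{APA-EQ}(ii), which only yields $\mathcal{D}_\lambda^F\subseteq\mathcal{D}(J_\lambda^F)$ with $\mathcal{D}_\lambda^F\subseteq Q$ by its very definition \eqref{D-definition} (indeed, in the Hadamard case $D_\kappa=+\infty$, so no rescaling of $\lambda$ is needed or helpful --- the obstruction is that $z$ may lie outside $Q$, and $L^F_z$ is not even defined there). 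Moreover, the ``one-step argument passing from $\lambda'$ to $\lambda$'' is asserted but not supplied, and it is not a routine step.

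The second gap is the one you yourself flag but do not close: bounding $F_{\lambda,z}(y,x)$ from below for $x$ far from the anchor point. The center-Lipschitz constant is a local quantity, so it does not directly give $-F_{\lambda,z}(y,x)\le L\,{\rm d}(x,y)$ globally, and ``the quadratic term wins over any linear bound'' is vacuous until you actually produce a linear bound. The paper's resolution is to fix an \emph{arbitrary} $y\in Q$ (not $y=z$), set $R:=L_y^{F_{\lambda,z}}<\infty$ and $L:=\overline{{\bf B}(y,R)}$, and use the \emph{global} subgradient inequality from Lemma \ref{subdifferential representation} together with $\|v\|\le R$ from Proposition \ref{P-PWF-0}: for any $v\in A_{F_{\lambda,z}}(y)$,
$$F_{\lambda,z}(y,x)\;\ge\;F_{\lambda,z}(y,y)+\langle v,\exp_y^{-1}x\rangle\;\ge\;-R\,{\rm d}(x,y),$$
which is linear in ${\rm d}(x,y)$ for \emph{all} $x\in Q$ because it comes from convexity, not from the Lipschitz estimate. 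Combining this with the quadrilateral inequality of \cite[(2.7)]{Colao2012}, which gives $F_{\lambda,z}(x,y)+F_{\lambda,z}(y,x)\le-{\rm d}^2(x,y)$ for all $(x,y)\in Q\times Q$ (and for every $z\in M$, inside $Q$ or not), yields $F_{\lambda,z}(x,y)\le(R-{\rm d}(x,y))\,{\rm d}(x,y)<0$ for $x\notin L$, i.e. condition (b2). These two ingredients --- an anchor $y\in Q$ decoupled from $z$, and the global subgradient inequality replacing the local Lipschitz bound --- are exactly what your write-up is missing.
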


\begin{proof}  Let $z\in M$.   Then by the assumptions made for $F$ and Proposition \ref{example} (i), (iii) and (v), one can checks easily
that  the bifunction $F_{\lambda,z}=\lambda F +G_z$ satisfies (H1)-(H4). To complete the proof, it suffices to verify that $J_{\lambda}^F(z)\not=\emptyset$,
which is true  by Theorem \ref{Existence-EP-2} in case {\rm (a)}. Thus we only consider case {\rm (b)}.
To  do this,  we assume that
  $M$ is a Hadamard manifold. Then, for any $x,y\in M$, $\exp_x^{-1}y$ is a singleton and 
${F_{\lambda,z}}(x,y)$ is reduced to
$$F_{\lambda,z}(x,y):=\lambda F(x,y)-\langle\exp_x^{-1}z,\exp_x^{-1}y\rangle.$$
Recalling from \cite[(2.7)]{Colao2012} that
$$
\langle\exp_x^{-1}z,\exp_x^{-1}y\rangle+\langle\exp_y^{-1}w,\exp_y^{-1}x\rangle\ge{\rm d}^2(x,y)\quad\mbox{for any }x,y\in M
$$
  and that $F$ is monotone on $Q\times Q$, we get that
\begin{equation}\label{EXP-Ha-1}
F_{\lambda,z}(x,y)+F_{\lambda,z}(y,x)\le -{\rm d}^2(x,y)\quad\mbox{for any }(x,y)\in Q\times Q.
\end{equation}
Below we show that $F_{\lambda,z}$ satisfies (b2) in Remark \ref{LCN}: there is a compact subset $L\subseteq M$ such that
\begin{equation}\label{F-lambda-b2}
\mbox{$x\in Q\setminus L$ $\Rightarrow$
[$\exists$ $y\in Q\cap L$ $s.t.$ $F_{\lambda,z}(x,y)<0$]}.
\end{equation}
Granting this, we get $J_{\lambda}^F(z)={\rm EP}(F_{\lambda,z},Q)\neq\emptyset$ by Corollary \ref{Existence-EP-uni-3-H},  and  the proof is complete.
 To  show \eqref{F-lambda-b2}, take $y\in Q$ and set $R:=L_{y}^{F_{\lambda,z}}$. Then $R<+\infty$  as $F_{\lambda,z}$ satisfies (H1), and  $L:=\overline{{\bf B}(y,R)}$ is as desired. To show this, let $x\in Q\setminus L$ and  $v\in A_{F_{\lambda,z}}(y)$.  
  Then, ${\rm d}(x,y)>R$, and $\|v\|\le R$ by Proposition \ref{P-PWF-0}. Therefore, we have that
$$
F_{\lambda,z}(y,x)\ge F_{\lambda,z}(y,y)+\langle v,\exp_{y}^{-1}x\rangle\ge -R{\rm d}(x,y)
$$
(noting that $F_{\lambda,z}(y,y)=0$).
This, together with \eqref{EXP-Ha-1}, implies
$$F_{\lambda,z}(x,y)\le -{\rm d}^2(x,y)-F_{\lambda,z}(y,x)\le \left(R-{\rm d}(x,y)\right){\rm d}(x,y)<0.$$
Thus, \eqref{F-lambda-b2} is shown, and the proof is complete. 
\end{proof}

To define the proximal point algorithm for solving EP \eqref{EP},   let $x_0\in Q$ and $\{\lambda_k\}\subset(0,+\infty)$. Thus the proximal point algorithm
  can be formulated as  follows.

{\bf Algorithm P}   Letting $k=1,2,\dots$ and having $x_k$, choose $x_{k+1}$
such that
$$
x_{k+1}\in J_{\lambda_k}^F(x_k)\cap{{\bf B}(x_k,\frac{D_{\kappa}}{4})}.
$$
Clearly, in the case when $M$ is a Hadamard manifold, {\bf Algorithm P} is reduced to the one defined in  \cite{Colao2012}:
$$
x_{k+1}\in J_{\lambda_k}^F(x_k)\quad\mbox{for each }k\in \IN.
$$

The  convergence result of {\bf Algorithm P} is as follows.



\begin{theorem}\label{Convergence-APA-EP}
Suppose that  $F$ satisfies hypotheses {\rm (H1)}-{\rm (H3)} and  is monotone on $Q\times Q$ with   ${\rm EP}(F,Q)\neq\emptyset$. Let $x_0\in Q$ and  $\{\lambda_k\}\subset (0,\infty)$ be such that
\begin{equation}\label{C-APA-EP00}
{\rm d}(x_0,{\rm EP}(F,Q))< \frac{D_{\kappa}}{8},
\end{equation}
\begin{equation}\label{C-APA-EP0}
\Sigma_{k=0}^{\infty}\lambda_k^2=\infty\quad\mbox{and}\quad \lambda_k L^F_{x_k}<\frac{D_{\kappa}}{4} \quad\mbox{for all } k\in\IN.
\end{equation}
Then, {\bf Algorithm P} is well-defined, and   converges to a point in ${\rm EP}(F,Q)$.
\end{theorem}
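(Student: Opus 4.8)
The plan is to reduce the convergence of \textbf{Algorithm P} to the known convergence theory of the proximal point algorithm for the VIP developed in \cite{LiY2012}, exploiting the identification $J_\lambda^F(z)\cap{\bf B}(z,\frac{D_\kappa}{4})=J_\lambda^{A_F}(z)$ established in Theorem \ref{APA-EQ}(iii). First I would check well-definedness: by hypotheses (H1)-(H3) and monotonicity of $F$, Propositions \ref{P-PWF-0} and \ref{P-PWF} give $A_F\in\mathcal{V}(Q)$ monotone with $\|v\|\le L_x^F$ for $v\in A_F(x)$; the second condition in \eqref{C-APA-EP0}, $\lambda_k L^F_{x_k}<\frac{D_\kappa}{4}$, says precisely $x_k\in\mathcal{D}_{\lambda_k}^F$ (see \eqref{D-definition}), so Theorem \ref{APA-EQ}(ii)-(iii) applies at each step: provided $x_k\in Q$, the set $J_{\lambda_k}^F(x_k)\cap{\bf B}(x_k,\frac{D_\kappa}{4})=J_{\lambda_k}^{A_F}(x_k)$ is a nonempty singleton, hence $x_{k+1}$ exists, is unique, lies in $Q$, and equals the VIP-resolvent iterate. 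By induction the whole sequence $\{x_k\}$ is well-defined and coincides with the proximal point sequence for the vector field $A_F$ on $Q$ with parameters $\{\lambda_k\}$ started at $x_0$.

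Next I would invoke the convergence theorem for the proximal point algorithm applied to a monotone vector field $A\in\mathcal{V}(Q)$ on a locally convex set $Q$ in a manifold of sectional curvature bounded above by $\kappa\ge0$ — this is the Riemannian counterpart proved in \cite{LiY2012}. Its hypotheses are: $\mathrm{VIP}(A_F,Q)\neq\emptyset$; the starting point is close enough to the solution set, namely ${\rm d}(x_0,\mathrm{VIP}(A_F,Q))<\frac{D_\kappa}{8}$; and $\sum_k\lambda_k^2=\infty$. The first hypothesis follows from ${\rm EP}(F,Q)\neq\emptyset$ together with Proposition \ref{relation-EP-VIP}: since $F$ is monotone, \eqref{f00} holds, so $\mathrm{VIP}(A_F,Q)=\mathrm{EP}(F,Q)\neq\emptyset$. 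Consequently ${\rm d}(x_0,\mathrm{VIP}(A_F,Q))={\rm d}(x_0,\mathrm{EP}(F,Q))<\frac{D_\kappa}{8}$ is exactly \eqref{C-APA-EP00}, and $\sum_k\lambda_k^2=\infty$ is the first condition in \eqref{C-APA-EP0}. Thus all hypotheses of the VIP proximal point convergence theorem are met, and the sequence $\{x_k\}$ converges to some $\bar x\in\mathrm{VIP}(A_F,Q)=\mathrm{EP}(F,Q)$, which is the desired conclusion.

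The step I expect to require the most care is verifying that the iterates remain in the region where the two resolvents coincide and where the cited VIP convergence result is genuinely applicable. Concretely, Theorem \ref{APA-EQ}(iii) identifies $J_{\lambda_k}^F(x_k)\cap{\bf B}(x_k,\frac{D_\kappa}{4})$ with $J_{\lambda_k}^{A_F}(x_k)$ only when $x_k\in\mathcal{D}_{\lambda_k}^F$, so one must propagate the inductive hypothesis $x_k\in Q$ — which is automatic since $J_{\lambda_k}^F(x_k)\subseteq Q$ by Definition \ref{resolvent} and \eqref{J_F-def} — and simultaneously confirm that the condition $\lambda_k L^F_{x_k}<\frac{D_\kappa}{4}$ in \eqref{C-APA-EP0} is imposed at every $k$, not merely $k=0$; the statement does assume it for all $k\in\IN$, so this is a matter of bookkeeping rather than a genuine obstacle. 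One should also make sure the constant $\frac{D_\kappa}{8}$ appearing in \eqref{C-APA-EP00} matches the closeness requirement in the VIP theorem of \cite{LiY2012} (the factor $\tfrac18$ versus $\tfrac14$ typically arises because the analysis needs the whole Fejér-type sequence to stay inside ${\bf B}(o,\frac{D_\kappa}{4})$-type regions); if there is any mismatch I would absorb it by a triangle-inequality argument showing the iterates stay within ${\bf B}(\bar x,\frac{D_\kappa}{4})$ for a solution $\bar x$, but I expect the constant to be chosen precisely so that the cited theorem applies verbatim.
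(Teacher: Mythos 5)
Your proposal is correct and follows essentially the same route as the paper's proof: identify the Algorithm P iterates with the proximal point iterates for the monotone vector field $A_F$ via Theorem \ref{APA-EQ}(iii), use Proposition \ref{relation-EP-VIP} to get ${\rm VIP}(A_F,Q)={\rm EP}(F,Q)$, and invoke the convergence result of \cite{LiY2012} for the VIP proximal point algorithm. The only detail worth making explicit is that the cited convergence result requires $\lambda_k\,{\rm d}(0,A_F(x_k))<\frac{D_\kappa}{4}$ at each step, which follows from your observation that $\|v\|\le L^F_{x_k}$ for $v\in A_F(x_k)$ together with the second condition in \eqref{C-APA-EP0}.
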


\begin{proof}
Recall that  $A_F:Q\rightrightarrows {TM}$ is defined by \eqref{VIP-10}. By assumption, Propositions \ref{relation-EP-VIP} and \ref{P-PWF} are applicable; hence   $A_F$ is monotone, $A_F\in\mathcal{V}(Q)$, and
\begin{equation}\label{Convergence-1}
{\rm VIP}(A_F,Q)={\rm EP}(F,Q)
\end{equation}
(noting  that \eqref{f00} hold by the  monotonicity assumption).
Then, thanks to  \eqref{C-APA-EP00}, one sees that
\begin{equation}\label{pConvergence-1}
{\rm d}(x_0,{\rm VIP}(A_F,Q))<\frac{D_{\kappa}}{8}.
\end{equation}
Let $\{\tilde {x}_k\}$ be a sequence generated by the following proximal algorithm with initial point $\tilde x_0:=x_0$, which was introduced in \cite{LiY2012} for finding a point in ${\rm VIP}(A_F,Q)$:
\begin{equation}\label{APA-VIP}
\tilde x_{k+1}\in J_{\lambda_k}^A(\tilde x_k) \quad \mbox{for each }k\in \IN.
\end{equation}
In view of the second assumption in  \eqref{C-APA-EP0},  and applying Theorem \ref{APA-EQ} (iii), we can check  inductively    that {\bf Algorithm P} is well-defined and that the generated sequence $\{{x}_k\}$ coincides with $\{\tilde {x}_k\}$  and satisfies
$$
\lambda_k {\rm d}(0,A_F(\tilde x_k))< \frac{D_{\kappa}}{4}\quad\mbox{for each }k\in\IN.
$$
This, together with the first assumption in \eqref{C-APA-EP0} and \eqref{pConvergence-1}, implies that \cite[Corollary 5.8]{LiY2012} (with $A_F$, $Q$ in place of $V$, $A$) is applicable, and the sequence $\{\tilde x_k\}$  and so $\{x_k\}$  converges to a point in ${\rm VIP}(A_F,Q)$. Thus the conclusion follows    immediately from \eqref{Convergence-1}, and the proof is complete.
\end{proof}

In the special case when  $M$ is a Hadamard manifold, assumption \eqref{C-APA-EP00} and the second one in \eqref{C-APA-EP0} are satisfied automatically. Therefore the following corollary is
direct from Theorem \ref{Convergence-APA-EP}, which was claimed in    \cite[Theorem 4.9, 4.10]{Colao2012} (for constant parameters $\lambda_k\equiv \lambda>0$) but with an incorrect proof there   as we explained  in Section 1).

\begin{corollary}\label{APA-coro}
Suppose that  $M$ is a Hadamard manifold, and that  $F$ satisfies hypotheses {\rm (H1)}-{\rm (H3)} and  is monotone on $Q\times Q$ with   ${\rm EP}(F,Q)\neq\emptyset$. Let  $\{\lambda_k\}\subset (0,\infty)$ be such that
$\Sigma_{k=0}^{\infty}\lambda_k^2=\infty$.
Then, {\bf Algorithm P} is well-defined, and converges to a
solution in ${\rm EP}(F,Q)$.
\end{corollary}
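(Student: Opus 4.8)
The plan is to derive Corollary \ref{APA-coro} as a direct specialization of Theorem \ref{Convergence-APA-EP} to the Hadamard manifold setting. First I would recall that a Hadamard manifold has nonpositive sectional curvatures, so we may take $\kappa = 0$ in the hypotheses of Theorem \ref{Convergence-APA-EP}; consequently $D_{\kappa} = +\infty$. With $D_{\kappa} = +\infty$, the quantity $\frac{D_{\kappa}}{8}$ is $+\infty$, so assumption \eqref{C-APA-EP00}, namely ${\rm d}(x_0, {\rm EP}(F,Q)) < \frac{D_{\kappa}}{8}$, holds trivially for any starting point $x_0 \in Q$ (using that ${\rm EP}(F,Q) \neq \emptyset$ so the distance is finite). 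Likewise, the condition $\lambda_k L^F_{x_k} < \frac{D_{\kappa}}{4} = +\infty$ in \eqref{C-APA-EP0} is automatically satisfied for every $k$, since $L^F_{x_k}$ is finite — this finiteness follows because $F$ satisfies (H1), so by Proposition \ref{P-PWF-0} the center Lipschitz constant $L^F_{x_k}$ of $F(x_k,\cdot)$ at $x_k$ is finite (indeed it bounds $\max_{v \in A_F(x_k)}\|v\|$). Hence the only surviving hypothesis among \eqref{C-APA-EP00}–\eqref{C-APA-EP0} is $\Sigma_{k=0}^\infty \lambda_k^2 = \infty$, which is exactly what the corollary assumes.

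Next I would check that the remaining structural hypotheses of Theorem \ref{Convergence-APA-EP} are in force: $Q$ is a nonempty closed locally convex subset of $M$ and $F$ is a proper bifunction with $0 \le F(x,x) < +\infty$ on $Q$ — these are the standing assumptions of Section 3 and Section 4, which remain in effect. The corollary explicitly assumes $F$ satisfies (H1)–(H3), that $F$ is monotone on $Q \times Q$, and that ${\rm EP}(F,Q) \neq \emptyset$; these are precisely the hypotheses of Theorem \ref{Convergence-APA-EP}. One should also note that in the Hadamard setting {\bf Algorithm P} reduces to $x_{k+1} \in J^F_{\lambda_k}(x_k)$ because ${\bf B}(x_k, \frac{D_\kappa}{4}) = {\bf B}(x_k, +\infty) = M$ and $J^F_{\lambda_k}(x_k) \subseteq Q \subseteq M$; this is the reduction already remarked in the text after the statement of {\bf Algorithm P}, so the iteration scheme in the corollary is literally {\bf Algorithm P}.

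With all hypotheses verified, I would simply invoke Theorem \ref{Convergence-APA-EP} to conclude that {\bf Algorithm P} is well-defined and that the generated sequence $\{x_k\}$ converges to a point in ${\rm EP}(F,Q)$, which is the assertion of the corollary. In fact $F$ monotone on $Q \times Q$ forces $F(x,x) = 0$ for all $x \in Q$ (since $F(x,x) \ge 0$ by the standing assumption and $2F(x,x) \le 0$ by monotonicity), so hypothesis (H4) — lower semicontinuity of $x \mapsto F(x,x)$ — is automatic; but this is already absorbed into the machinery of Theorem \ref{Convergence-APA-EP} (its proof only uses (H1)–(H3) together with monotonicity), so no separate argument is needed.

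There is no real obstacle here; the entire content is the observation that $\kappa = 0$ collapses the curvature-dependent restrictions \eqref{C-APA-EP00} and the second half of \eqref{C-APA-EP0} to vacuous conditions. The only point requiring a word of care is confirming that $L^F_{x_k} < +\infty$ for each $k$, so that $\lambda_k L^F_{x_k} < +\infty$ is genuinely a trivial inequality rather than an ill-posed one — and this is handled by the finiteness of the center Lipschitz constant guaranteed by (H1) via Proposition \ref{P-PWF-0}. Thus the proof is essentially one paragraph: specialize, check hypotheses, apply Theorem \ref{Convergence-APA-EP}.
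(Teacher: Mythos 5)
Your proposal is correct and follows exactly the paper's route: the paper likewise obtains the corollary by specializing Theorem \ref{Convergence-APA-EP}, observing that for a Hadamard manifold $D_\kappa=+\infty$, so that \eqref{C-APA-EP00} and the second condition in \eqref{C-APA-EP0} hold automatically. Your extra remarks on the finiteness of $L^F_{x_k}$ and the reduction of {\bf Algorithm P} are consistent with what the paper itself asserts and add nothing that conflicts with its argument.
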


\section{Applications} This section is devoted to two applications of the results regarding the solution set of the EP in the previous sections:
One is to the Nash equilibrium and the other to the mixed variational inequality.

\subsection{Nash equilibrium}

We consider the Nash equilibrium problem (NEP for short) on Riemannian manifolds in this subsection, which is formulated as follow. Let $I=\{1,2,\dots,m\}$ be a finite index set which denotes the set of players, and let $(M_i,{\rm d}_i), \,  i\in I$, be a Riemannian manifold. For each $i\in I$, let  $Q_i\subseteq M_i$ be the strategy set of the $i$-th player, and $f_{i}:M\rightarrow\overline{\IR}$ be his loss-function, where $M:=M_1\times M_2\times\dots \times M_m$ is the product manifold with the standard Riemannian product metric. The Nash equilibrium problem associated
to $Q:=Q_1\times Q_2\times\dots \times Q_m\subseteq M$ and $\{f_i\}_{i\in I}$ consists of finding a point ${\bar x} =({\bar x} _i)\in Q$ such that
\begin{equation}\label{Nash-E}
f_i({\bar x} )=\min_{y_i\in Q_i}f_{i}({\bar x} _1,\dots,{\bar x} _{i-1},y_i,{\bar x} _{i+1},\dots,{\bar x} _m) \quad\mbox{for each $i\in I$}.
\end{equation}
Any point ${\bar x} \in Q$ satisfying \eqref{Nash-E} is called a Nash equilibrium point of the NEP, and we denote the set of all Nash equilibrium points by ${\rm NEP}(\{f_i\}_{i\in I},Q)$.



The most well-known existence results for the classical NEP in the linear space setting is due to Nash \cite{Nash1950,Nash1951},
where it is assumed that each $Q_i$ is  compact and convex  and each  $f_i$ is
(quasi)convex in the $i$-th variable.
Further extensions and applications of Nash's original work could be founded in
\cite{Georgiev2005,Morgan2007,Nessah2008,Tala1996} and references therein.
Krist\'{a}ly seems  the first one   to consider the existence and localization of NEP
in the framework of Riemannian manifolds; see  [23]. Recently, Krist\'{a}ly      used  in  [24] a variational approach to analyzed the NEP
with nonconvex strategy sets and nonconvex /nonsmooth payoff functions in Hadamard manifolds.

To proceed, we   assume for  the whole subsection that

%
%
\begin{description}
 \item[(${\rm H_N}$-a)] $Q_i$ is closed and weakly convex in $M_i$ and $Q\subseteq{\rm int}\bigcap_{i\in I}\mathcal{D}(f_{i})$;
 \item[(${\rm H_N}$-b)]  for each $i\in I$, $f_i$ is continuous on $Q$;
 \item[(${\rm H_N}$-c)] for each $i\in I$, $f_{i}$ and $f_{i}+\delta_{Q}$ are  weakly convex in the $i$-th variable.
  \end{description}

To apply our results in the previous sections, we,  following \cite{Colao2012} and \cite{Rosen1965},
 reformulate NEP \eqref{Nash-E} as an EP as follows. 
Let $r:=(r_i)\in \IR_{++}^m:=\{(r_i)\in \IR^m: \mbox{each }r_i>0\}$, and define the bifunction $F_r:M\times M\rightarrow \mathbb{R}$ as
 the weighted positive sum of the functions $\{f_i\}$:
$$
F_r(x,y):=\sum_{i\in I}r_i\left(f_i(x_1,\ldots,x_{i-1},y_i,x_{i+1},\ldots,x_n)-f_i(x)\right)
$$
 for any $x=(x_i)_{i\in I}, y=(y_i)_{i\in I}\in M$. Then, it is easy to check  that
\begin{equation}\label{Nash-E-E-r}
\mbox{${\rm EP}(F_r,Q)$=${\rm NEP}(\{f_i\}_{i\in I},Q)$}.
\end{equation}
  In the spirit of the idea in \cite{Rosen1965} for the NEP in the Euclidean space setting,
   we introduce the pseudosubgradient mapping $g_r:M\to TM$ for functions $\{f_i\}$ in the Riemannian manifold setting, which is defined by
$$
g_r(x):=\left(r_1\partial_1f_1(x),r_2\partial_2f_2(x),\dots,r_m\partial_mf_m(x)\right)\quad\mbox{for each }x\in M,
$$
where,  for each $i\in I$ and $x\in M$, $\partial_i f_{i}(x)$
stands for the subdifferential of the function  $f_i(x_1,\dots,x_{i-1},\cdot,x_{i+1},\dots,x_m)$ at $x_i$, that is
$$\partial_i f_{i}{(x)}:=\partial f_i(x_1,\dots,x_{i-1},\cdot,x_{i+1},\dots,x_m)(x_i).$$
%
By definition, we check that
\begin{equation}\label{AF-NEPE-S}
A_{F_r}(x):=\partial {F_r}(x,\cdot)(x)=g_r(x)\quad\mbox{ for any }x\in Q.
\end{equation}
The main theorem in this subsection is as follows, which provides the results on
the existence, the uniqueness and the convexity of the Nash equilibrium point.

\begin{theorem}\label{Existence-Nash} Let $r\in \IR_{++}^m$. Then the  following assertions hold:

{\rm (i)} Suppose that $Q$ 
contains a weak pole $o\in{\rm int}_RQ$.   Then ${\rm NEP}(\{f_i\}_{i\in I},Q)\neq\emptyset$ provided that $Q$ is compact,
 or $Q$ has the BCC property and that there exists    a compact subset $L\subseteq M$ such that
      \begin{equation}\label{L_compact-o}
        x\in Q\setminus L\Rightarrow[\forall v\in g_r(x),\exists y\in Q\cap L,\gamma_{xy}\in{\rm min-}\Gamma_{xy}^Q \mbox{ s.t.
         }\langle v,\dot{\gamma}_{xy}(0)\rangle<0].
        %
      \end{equation}

{\rm (ii)} If $g_r$ is strictly monotone on $Q$, then ${\rm NEP}(\{f_i\}_{i\in I},Q)$ is at most a singleton.

{\rm (iii)} If $g_r$ is monotone on $Q$  and ${\rm NEP}(\{f_i\}_{i\in I},Q)\neq\emptyset$, then
${\rm NEP}(\{f_i\}_{i\in I},Q)$ is locally convex, and ${\rm NEP}(\{f_i\}_{i\in I},Q)$ is $D_\kappa$-convex if $M$ is additionally assumed
to be of the sectional curvatures bounded above by some  $\kappa\ge 0$.
\end{theorem}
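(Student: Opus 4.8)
The plan is to deduce Theorem~\ref{Existence-Nash} from the results on the EP established in Sections 3 and 4, via the identification \eqref{Nash-E-E-r} and the computation \eqref{AF-NEPE-S} of the associated vector field. First I would record that, under hypotheses $({\rm H_N}$-a)--$({\rm H_N}$-c), the bifunction $F_r$ satisfies all the structural assumptions needed to invoke the EP theory: since each $f_i(\cdot)$ is finite and weakly convex in the $i$-th variable near $Q$ and $Q\subseteq{\rm int}\bigcap_{i\in I}\mathcal{D}(f_i)$, the function $F_r(x,\cdot)$ is proper with $x\in{\rm int}\mathcal{D}(F_r(x,\cdot))$ and weakly convex at $x$, so {\rm (H1)} holds; adding $\delta_{Q\times Q}$ and using $({\rm H_N}$-c) (and that $Q$ is weakly convex, being a product of weakly convex sets) gives {\rm (H2)}; the continuity in $({\rm H_N}$-b) gives {\rm (H3)}, and $F_r(x,x)=0$ for all $x\in Q$ gives {\rm (H4)} as well as \eqref{f00}. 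I would also observe that, by Lemma~\ref{Sub-Sum} applied coordinatewise on the product manifold (the $i$-th summand of $F_r(x,\cdot)$ depends only on the $i$-th variable), $A_{F_r}(x)=\partial F_r(x,\cdot)(x)=g_r(x)$, which is \eqref{AF-NEPE-S}; in particular $g_r$ is compact- and convex-valued by Proposition~\ref{P-PWF-0}, and it lies in $\mathcal{V}(Q)$ by Proposition~\ref{P-PWF}(ii).

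For assertion (i): with the above, Theorem~\ref{Existence-EP-2} applies to the pair $(F_r,Q)$. When $Q$ is compact this immediately yields ${\rm EP}(F_r,Q)\neq\emptyset$. When $Q$ has the BCC property, I would note that condition \eqref{L_compact-o} is precisely condition \eqref{L_compact} of Proposition~\ref{LY-Existence-coercive}(b) written out for $A=A_{F_r}=g_r$; hence assumption (b) of Proposition~\ref{LY-Existence-coercive} holds with $A_F$ replaced by $A_{F_r}$, and Theorem~\ref{Existence-EP-2} again gives ${\rm EP}(F_r,Q)\neq\emptyset$. In both cases \eqref{Nash-E-E-r} translates this into ${\rm NEP}(\{f_i\}_{i\in I},Q)\neq\emptyset$.

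For assertions (ii) and (iii): here I would first invoke Proposition~\ref{P-PWF}(i), which transfers (strict) monotonicity of $F_r$ on $Q\times Q$ to $A_{F_r}$; but since we are directly given a monotonicity hypothesis on $g_r=A_{F_r}$, that step can be bypassed and we simply note $A_{F_r}\in\mathcal{V}(Q)$ is (strictly) monotone. Then, for (ii), since ${\rm NEP}(\{f_i\}_{i\in I},Q)={\rm EP}(F_r,Q)={\rm VIP}(A_{F_r},Q)$ by \eqref{Nash-E-E-r} and Proposition~\ref{relation-EP-VIP} (using \eqref{f00}), Proposition~\ref{LY-unique}(ii) gives that the set is at most a singleton. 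For (iii), the same identification together with Proposition~\ref{LY-unique}(i) gives local convexity when $g_r$ is monotone and the solution set is nonempty, and Proposition~\ref{LY-unique}(iii) gives $D_\kappa$-convexity when in addition $M$ has sectional curvature bounded above by $\kappa\ge 0$ and $Q$ is $D_\kappa$-convex; one should remark that here $Q$ being $D_\kappa$-convex follows from each $Q_i$ being weakly convex together with the curvature bound (for $\kappa=0$ weak convexity is $D_\kappa$-convexity, and for $\kappa>0$ one uses that minimal geodesics of length $<D_\kappa$ between points of $Q$ lie in $Q$), or is to be understood as an implicit standing hypothesis as in Theorem~\ref{EP-uinique}(iii).

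The routine but slightly delicate point is the coordinatewise subdifferential sum rule giving \eqref{AF-NEPE-S}: one must check that on the Riemannian product $M=M_1\times\cdots\times M_m$ the tangent space splits orthogonally, that $F_r(x,\cdot)$ decomposes as a sum of functions each depending on a single block of variables, and that Lemma~\ref{Sub-Sum} can be applied $m-1$ times (the weak convexity of each summand and of the sum at $x$ being guaranteed by $({\rm H_N}$-c) and {\rm (H1)}). The main conceptual obstacle, if any, is ensuring the hypotheses of the cited EP results are met verbatim — in particular that $Q$ inherits weak convexity and, for part (iii), the $D_\kappa$-convexity needed by Proposition~\ref{LY-unique}(iii) — rather than any new analytic difficulty; the proof is essentially a bookkeeping reduction to Theorems~\ref{Existence-EP-2} and~\ref{EP-uinique}.
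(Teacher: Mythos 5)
Your proposal is correct and follows essentially the same route as the paper: verify that $F_r$ satisfies {\rm (H1)}--{\rm (H4)} under $({\rm H_N}$-a)--$({\rm H_N}$-c) (the only nontrivial point being the weak convexity of $F_r(x,\cdot)$ and $F_r(x,\cdot)+\delta_{Q\times Q}(x,\cdot)$, obtained from coordinatewise geodesics in the product manifold), identify $A_{F_r}=g_r$ via the sum rule, and then invoke Theorems~\ref{Existence-EP-2} and~\ref{EP-uinique} together with \eqref{Nash-E-E-r}. Your side remark about the $D_\kappa$-convexity of $Q$ needed for Proposition~\ref{LY-unique}(iii) is a fair observation about an implicit hypothesis that the paper itself glosses over, but it does not change the argument.
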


\begin{proof}
In view of \eqref{Nash-E-E-r}, \eqref{AF-NEPE-S}, \eqref{L_compact-o} and  thanks to Theorems \ref{Existence-EP-2} and \ref{EP-uinique} (applied to $F_r$ in place of $F$), it suffices to show
  that $F_r$ satisfies hypotheses (H1)-(H4) made in Section 3. Note that (H3) follows trivially from assumption (${\rm H_N}$-b);
  while (H4)   is clear as  ${F_r(x,x)}=0$ for any $x\in Q$. Thus we only need to show that $F_r$ satisfies hypotheses (H1) and (H2).
To do this, let $x=(x_i)_{i\in I}\in Q$, and write
$$\mathcal{D}_i:=\mathcal{D}(f_i(x_1,\dots,x_{i-1},\cdot,x_{i+1},\dots,x_m))\quad \mbox{ for each }i\in I.
$$
Then
\begin{equation}\label{EQ-DORM}
{\mathcal{D}(F_r(x,\cdot))}=\mathcal{D}_1\times\ldots\times
\mathcal{D}_i\times\ldots\times\mathcal{D}_m.
\end{equation}
By assumption (${\rm H_N}$-a), each $Q_i\subseteq {\rm int} \mathcal{D}_i$ and so
$$
x\in Q\subseteq {\rm int}{\mathcal{D}(F_r(x,\cdot))}.
$$
Furthermore,  in light of   assumption (${\rm H_N}$-c), one sees that  each $\mathcal{D}_i$ is weakly convex in $M_i$.
 This, together with \eqref{EQ-DORM}, implies that   ${\mathcal{D}(F_r(x,\cdot))}$ is weakly convex in $M$. We claim that
 $F_r(x,\cdot)$ and $F_r(x,\cdot)+\delta_{Q\times Q}(x,\cdot)$ are weakly  convex in $M$. Granting this, (H1) and (H2) are checked.
 %
%
In fact, let
  $y=(y_i),  z=(z_i)\in {\mathcal{D}(F_r(x,\cdot))}$. Then,  by assumption (${\rm H_N}$-c), for each $i\in I$,
there is a geodesic $\gamma_i\in {\rm min-}\Gamma_{z_iy_i}^{\mathcal{D}_i}$
such that
 \begin{equation}\label{f-i-dc}
  f_i(x_1,\dots,x_{i-1},\cdot,x_{i+1},\dots,x_m)\circ\gamma_i\mbox{ is convex on }[0,1].
 \end{equation}
Define $\gamma_{zy} [0,1]\rightarrow M$ by   $\gamma_{zy}(t):=(\gamma_1(t),\gamma_2(t),\ldots,\gamma_m(t))$ for each $t\in [0,1]$. Then,
$\gamma_{zy}\in{\rm min-}\Gamma_{zy}^{ F_r(x,\cdot)}$ (see, e.g., \cite{Bridson1999}), and
$$F_r(x,\cdot)\circ\gamma_{zy}=\sum_{i\in I}f_i(x_1,\dots,x_{i-1},\cdot,x_{i+1},\dots,x_m)\circ\gamma_i.$$
This means that $F_r(x,\cdot)\circ\gamma_{zy}$ is clearly convex thanks to \eqref{f-i-dc}, and
so $F_r(x,\cdot)$ is weakly  convex in $M$. Similarly, one can checks that $F_r(x,\cdot)+\delta_{Q\times Q}(x,\cdot)$ is also weakly  convex in $M$.
 Thus the claim stands, and the proof is complete.
%
%
\end{proof}

\begin{remark}
Assertion {\rm (i)} extends the corresponding one in \cite[Theorem 1.1]{Kristaly2010},
 which was proved under the assumption that each $Q_i$ is compact and geodesic convex. It is worthy remarking that  the geodesic convexity assumption for  $Q_i$ prevents
 its application to  some special but important Riemannian manifolds, such as compact Stiefel manifolds ${\rm St}(p,n)$ and Grassmann manifolds ${\rm Grass}(p,n)$ ($p<n$), in which there is no geodesic convex subset (see \cite[p. 104 (5.27)]{Absil2008}). 
\end{remark}

{
 Example \ref{EX5.1} below   provides the case where our existence result of Theorem \ref{Existence-Nash} is applicable but not \cite[Theorem 1.1]{Kristaly2010}. Note also that the NEP in Example \ref{EX5.1} is originally   defined on the Euclidean space, and   the corresponding existing results in the Euclidean space setting  (see, e.g., \cite{Georgiev2005,Morgan2007,Nash1950,Nash1951}), to the best our knowledge, are nor applicable because the set $Q_2$ involved is not convex in the usual sense.  
%
}
{
\begin{example}\label{EX5.1} Consider the  Nash equilibrium problem \eqref {Nash-E} with the associated
  $Q:=Q_1\times Q_2 \subseteq \IR\times \IR^3$ and $\{f_i\}_{i=1,2}$ defined respectively  as follows:
 $$Q_1:=[-1,1],\quad
Q_2:=\{(t_1,t_2,t_3):t_1^2+t_2^2+t_3^2=1,t_1>0,|t_2|\le\frac12,t_3>0\},$$ 
$$f_1(x_1,x_2)=(x_1-t_3)^2\quad\mbox{and}\quad f_2(x_1,x_2)=\arccos t_1\quad\mbox{for any }x_1\in \IR,\;x_2=(t_1,t_2,t_3)\in \IR^3.$$
Clearly $Q_2\subset\IR^3$ is not convex, and so  the existence results in the Euclidean space setting are not applicable. 

Below, we shall consider the problem on the Riemannian manifold $M:=\IR\times \mathbb{S}^2$, where 
\begin{equation*}
\mathbb{S}^2:=\big\{(t_1,t_2,t_3)\in\IR^3\big|\;t_1^2+t_2^2+t_3^2=1
\big\}
\end{equation*}
is the $2$-dimensional unit sphere. Denote ${\bf x}:=(0,0,1)$, ${\bf y}:=(0,0,-1)$, and consider
system
of coordinates $\Phi\colon(0,\pi)\times[0,2\pi]\subset\IR^2\to
\mathbb{S}^2\setminus\{{\bf x},{\bf y}\}$ around 
$x\in\mathbb{S}^2\setminus\{{\bf x},{\bf y}\}$ 
%
defined by
$$\Phi(\theta,\varphi):=(\sin\theta\cos\varphi,\sin\theta\sin\varphi,\cos\theta)^T
\quad\mbox{for each } (\theta \varphi)\in(0,\pi)\times[0,2\pi].
%
$$
  Then the Riemannian metric on $\mathbb{S}^2\setminus\{{\bf x},{\bf y}\}$ is
given by
\begin{equation*}
g_{11}=1,\quad g_{12}=0,\quad g_{22}=\sin^2\theta\quad\mbox{for
each }\;\theta\in(0,\pi)\;\mbox{ and }\;\varphi\in[0,2\pi],
\end{equation*}
and the geodesics of $\mathbb{S}^2\setminus\{{\bf x},{\bf y}\}$ are
great circles or semicircles; see  \cite[p. 84]{Udriste1994} for more details. 

Restricting $f_1$ and  $f_2$ to $M=\IR\times\mathbb{S}^2$, 
  one can check by definition  that assumptions (${\rm H_N}$-a)-(${\rm H_N}$-c) are satisfied (noting that $\arccos t_1={\rm d}(x_2,z_0)$ for each $x_2=(t_1,t_2,t_3)\in \mathbb{S}^2$, where $z_0:=(1,0,0)$), and that $Q\subset M$ is compact and has a weak pole in $ {\rm int}Q$. Thus,  Theorem \ref{Existence-Nash} is applicable and  guarantees
   ${\rm NEP}(\{f_1,f_2\},Q_1\times Q_2)\not=\emptyset$. Indeed, by a simple calculation, we see that ${\rm NEP}(\{f_1,f_2\},Q_1\times Q_2)=\{(1,(1,0,0))\}$. However, the existence result in \cite[Theorem 1.1]{Kristaly2010} is not applicable because there is no geodesic subset on $\mathbb{S}^2$.

\end{example}
}

As explained before  (see the paragraph right before Corollary \ref{Existence-EP-uni-3-H}), the following corollary is a direct consequence of Theorem \ref{Existence-Nash}. In particular, assertion (i) was proved in \cite[Theorem 3.12]{Colao2012} with each $Q_i$ being compact; while assertion (ii) is new even in the Hadamard manifold setting.

\begin{corollary}\label{Existence-Nash-H}
Suppose that each $M_i$ is a Hadamard manifold. Then, the following assertions hold:

{\rm (i)} The solution set ${\rm NEP}(\{f_i\}_{i\in I},Q)\neq\emptyset$ provided $Q$ is compact, or there exists
 a compact subset $L\subseteq M$ such that \eqref{L_compact-o} holds for some $r\in \IR_{++}^m$.

{\rm (ii)} If there exists some $r\in \IR_{++}^m$ such that $g_r$ is monotone on $Q$, then ${\rm NEP}(\{f_i\}_{i\in I},Q)$ is convex.
\end{corollary}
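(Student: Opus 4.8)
The plan is to obtain Corollary~\ref{Existence-Nash-H} by specializing Theorem~\ref{Existence-Nash} to the case where each $M_i$ is a Hadamard manifold, using three simplifications available in this setting. First, the product $M=M_1\times\cdots\times M_m$ is again a Hadamard manifold. Second, by the remark following Definition~\ref{convexset} all the notions of convexity coincide on a Hadamard manifold, so the weak convexity of each $Q_i$ required by (${\rm H_N}$-a) upgrades to convexity, whence $Q=Q_1\times\cdots\times Q_m$ is a closed convex subset of $M$. Third, as recalled just before Corollary~\ref{Existence-EP-uni-3-H}, every convex subset of a Hadamard manifold admits a weak pole and enjoys the BCC property, and $D_\kappa=+\infty$ when $\kappa=0$. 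Since assumptions (${\rm H_N}$-a)--(${\rm H_N}$-c) are in force throughout the subsection, the proof of Theorem~\ref{Existence-Nash} already shows that $F_r$ satisfies hypotheses {\rm (H1)}--{\rm (H4)}, so nothing on that front needs to be redone.

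For assertion~(i), I would fix a point $o$ in the relative interior ${\rm int}_RQ$ (which is nonempty by construction); because minimal geodesics in a Hadamard manifold are unique and $Q$ is convex, $o$ is automatically a weak pole of $Q$ lying in ${\rm int}_RQ$. In the compact case this is all that is needed to apply Theorem~\ref{Existence-Nash}(i). In the remaining case one assumes a compact $L\subseteq M$ with \eqref{L_compact-o}, and since $Q$ has the BCC property for free, Theorem~\ref{Existence-Nash}(i) again applies and gives ${\rm NEP}(\{f_i\}_{i\in I},Q)\neq\emptyset$.

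For assertion~(ii), if ${\rm NEP}(\{f_i\}_{i\in I},Q)=\emptyset$ there is nothing to prove. Otherwise, the Hadamard manifold $M$ has nonpositive sectional curvatures, hence curvatures bounded above by $\kappa:=0\ge0$; moreover $Q$, being convex with minimal geodesics of $M$ unique, is $D_0$-convex. Theorem~\ref{Existence-Nash}(iii) then yields that ${\rm NEP}(\{f_i\}_{i\in I},Q)$ is $D_0$-convex, which---since $D_0=+\infty$ and minimal geodesics are unique---is precisely convexity.

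The whole argument is essentially bookkeeping; the only point that needs a little care is to confirm that every hypothesis of Theorem~\ref{Existence-Nash} not explicitly present in the corollary---namely the existence of a weak pole in ${\rm int}_RQ$, the BCC property, and the ($D_\kappa$-)convexity of $Q$---is supplied automatically by the Hadamard structure, and to dispose of the trivial empty-solution case in~(ii) separately.
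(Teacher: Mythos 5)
Your proposal is correct and follows essentially the same route as the paper, which derives the corollary directly from Theorem~\ref{Existence-Nash} by observing that in a Hadamard manifold all convexity notions coincide, every convex set has a weak pole in its relative interior and the BCC property, and $D_\kappa$-convexity with $\kappa=0$ reduces to ordinary convexity. Your additional bookkeeping (product of Hadamard manifolds is Hadamard, disposing of the empty case in (ii)) only makes explicit what the paper leaves implicit.
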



\begin{remark}\label{Nash-CP1}  In view of \eqref{AF-NEPE-S}, one checks by Remark \ref{LCN} (applied to $g_r$, $F_r$ in place of $A_F$, $F$) that
 a compact subset $L$ exists such that
  \eqref{L_compact-o} holds provided one of the following assumptions holds:

{\rm (a)} 
$g_r$ satisfies the coerciveness condition on $Q$;

{\rm (b)} 
there exists a compact set $L\subseteq M$ such that
$$
      \begin{array}{l}
        x\in Q\setminus L\Rightarrow
        [\exists y\in Q\cap L\mbox{ s.t.  }F_r(x,y)<0].
      \end{array}
      $$

\end{remark}


\subsection{Mixed variational inequalities}

Let $Q\subset M$ be a nonempty closed subset. Given a vector field $V:Q\rightarrow {TM}$ and a real-valued function $f:M\rightarrow\overline{\IR}$. 
The mixed variational inequality
problem (MVIP for short) associated to $V$ and $f$  is to  find $\bar x\in Q$, called a solution of the MVIP, such that 
\begin{equation}\label{MVIP}
\langle V(\bar x),\dot{\gamma}_{\bar xy}(0)\rangle+f(y)-f(\bar x)\ge0\quad \mbox{for any $y\in Q$, $\gamma_{\bar xy}\in\Gamma_{\bar xy}^Q$}.
\end{equation}
The set of all solutions of MVIP \eqref{MVIP} is denoted by ${\rm MVIP}(V,f,Q)$. The  MVIP  has been studied extensively in the linear space setting; see, e.g., \cite{Glowinski1981,He1999,WangYangHe2001}; and it seems that \cite{Colao2012} is the first paper to explore
the MVIP in the Hadamard manifold  setting, where only  the  existence  issue  of  the solution for the  MVIP  is concerned with.

To reformulate the MVIP  as an EP considered in the previous sections,  we 
define $F:M\times M\rightarrow(-\infty,+\infty]$ as follows:
\begin{equation}\label{MVIP-F}
F(x,y):=\sup_{u\in \exp_x^{-1}y}\langle V(x),u\rangle_{x}+f(y)-f(x)\quad\mbox{for any } (x,y)\in M\times M,
\end{equation}
where $\exp_x^{-1}y$ is defined by \eqref{EYX} and we adopt the convention that $a-(+\infty)=+\infty$ for any $a\in\IR$. 
\begin{proposition}
Let $F:M\times M\rightarrow(-\infty,+\infty]$ be defined by \eqref{MVIP-F}. Suppose that $f$ is convex. Then we have
\begin{equation}\label{EQ-MVIP}
{\rm MVIP}(V,f,Q)={\rm EP}(F,Q).
\end{equation}
\end{proposition}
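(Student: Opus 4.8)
The plan is to prove the set equality $\mathrm{MVIP}(V,f,Q)=\mathrm{EP}(F,Q)$ by unwinding both definitions and showing that a point $\bar x\in Q$ satisfies one if and only if it satisfies the other. First I would record the basic observation that, for the bifunction $F$ defined by \eqref{MVIP-F}, we have
$$
F(\bar x,y)=\sup_{u\in\exp_{\bar x}^{-1}y}\langle V(\bar x),u\rangle_{\bar x}+f(y)-f(\bar x),
$$
so that $F(\bar x,x)=0$ for each $x\in Q$ (since $\exp_x^{-1}x=\{0\}$ when restricted as in \eqref{EYX}), and hence the condition $F(\bar x,y)\ge 0$ for all $y\in Q$ reads
$$
\sup_{u\in\exp_{\bar x}^{-1}y}\langle V(\bar x),u\rangle_{\bar x}\ge f(\bar x)-f(y)\quad\text{for all }y\in Q.
$$
Second, I would relate the set $\exp_{\bar x}^{-1}y$ to the initial velocities of minimal geodesics: by the very definition \eqref{EYX}, $u\in\exp_{\bar x}^{-1}y$ iff $u=\dot\gamma_{\bar xy}(0)$ for some minimal geodesic $\gamma_{\bar xy}$ joining $\bar x$ to $y$; thus
$$
\sup_{u\in\exp_{\bar x}^{-1}y}\langle V(\bar x),u\rangle_{\bar x}=\sup_{\gamma_{\bar xy}\in{\rm min\text{-}}\Gamma_{\bar xy}}\langle V(\bar x),\dot\gamma_{\bar xy}(0)\rangle_{\bar x}.
$$

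The subtle point — and the step I expect to be the main obstacle — is that the MVIP in \eqref{MVIP} quantifies over \emph{all} geodesics $\gamma_{\bar xy}\in\Gamma_{\bar xy}^Q$ (not only minimal ones, and with the constraint that the geodesic stays in $Q$), whereas the supremum appearing in $F$ is taken only over \emph{minimal} geodesics from $\bar x$ to $y$, with no a priori requirement that they lie in $Q$. Here is where the convexity of $f$ must be used, together with the standing assumption (see the beginning of Section 3, which also governs this subsection via the reformulation) that $Q$ is closed and locally convex — and in fact weakly convex is what the reformulation as an EP needs. The plan is: for the inclusion $\mathrm{EP}(F,Q)\subseteq\mathrm{MVIP}(V,f,Q)$, given $\bar x\in\mathrm{EP}(F,Q)$ and any $y\in Q$ and any $\gamma_{\bar xy}\in\Gamma_{\bar xy}^Q$, I would compare $\langle V(\bar x),\dot\gamma_{\bar xy}(0)\rangle$ with $\sup_{u\in\exp_{\bar x}^{-1}y}\langle V(\bar x),u\rangle$; if $\gamma_{\bar xy}$ is itself minimal this is immediate, and for a non-minimal geodesic one argues via the convexity of $t\mapsto F(\bar x,\gamma_{\bar xy}(t))$ along $\gamma_{\bar xy}$ — which holds because $\sup_{u\in\exp_{\bar x}^{-1}(\cdot)}\langle V(\bar x),u\rangle$ is convex along minimal geodesics by the argument of Proposition \ref{example}(ii) and $f$ is convex — to derive $\langle V(\bar x),\dot\gamma_{\bar xy}(0)\rangle+f(y)-f(\bar x)\ge F(\bar x,y)\ge0$; more precisely $F(\bar x,\cdot)\circ\gamma_{\bar xy}$ being convex with value $0$ at $t=0$ and value $\ge0$ at $t=1$ forces its right derivative at $0$, namely $\langle V(\bar x),\dot\gamma_{\bar xy}(0)\rangle+f'(\bar x;\dot\gamma_{\bar xy}(0))$ bounded appropriately, to be $\ge F(\bar x,y)$.

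For the reverse inclusion $\mathrm{MVIP}(V,f,Q)\subseteq\mathrm{EP}(F,Q)$, given $\bar x\in\mathrm{MVIP}(V,f,Q)$ and $y\in Q$, I would take an arbitrary $u\in\exp_{\bar x}^{-1}y$, i.e.\ $u=\dot\gamma_{\bar xy}(0)$ for a minimal geodesic $\gamma_{\bar xy}$ from $\bar x$ to $y$; using weak convexity of $Q$ one may choose this minimal geodesic inside $Q$, so $\gamma_{\bar xy}\in\Gamma_{\bar xy}^Q$ and \eqref{MVIP} applies to give $\langle V(\bar x),u\rangle+f(y)-f(\bar x)\ge0$, and taking the supremum over such $u$ yields $F(\bar x,y)\ge0$. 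If $\exp_{\bar x}^{-1}y$ contains points that are not initial velocities of $Q$-contained minimal geodesics, one still gets $F(\bar x,y)\ge0$ because the sup is achieved (or approached) along the admissible one; alternatively, since $Q$ is weakly convex at least one minimal geodesic lies in $Q$, which suffices to bound the relevant supremum from below by $0$ once one notes that all minimal geodesics from $\bar x$ to $y$ give the same value of $\|u\|={\rm d}(\bar x,y)$ and, combined with the MVIP inequality along the $Q$-admissible one, the conclusion $F(\bar x,y)\ge 0$ follows. Assembling the two inclusions gives \eqref{EQ-MVIP}. I would present the forward derivative computation carefully, as that is the only place convexity of $f$ genuinely enters and where a careless argument could fail.
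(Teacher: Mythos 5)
Your easy direction (${\rm MVIP}(V,f,Q)\subseteq{\rm EP}(F,Q)$, which the paper dismisses as evident) is fine, but the main direction ${\rm EP}(F,Q)\subseteq{\rm MVIP}(V,f,Q)$ has a genuine gap. First, the case you call ``immediate'' is not: if $\gamma_{\bar xy}$ is minimal, then $\dot\gamma_{\bar xy}(0)\in\exp_{\bar x}^{-1}y$ only gives $\langle V(\bar x),\dot\gamma_{\bar xy}(0)\rangle\le\sup_{u\in\exp_{\bar x}^{-1}y}\langle V(\bar x),u\rangle$, so $F(\bar x,y)\ge0$ bounds the supremum from below and says nothing about the particular term you need; when $\exp_{\bar x}^{-1}y$ is not a singleton this deduction simply does not go through. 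Second, your ``more precisely'' step is backwards as a statement about convex functions: a convex $h$ on $[0,1]$ with $h(0)=0$ and $h(1)\ge0$ satisfies $h'(0^+)\le h(1)$, not $h'(0^+)\ge h(1)$ (take $h(t)=t^2-t$), so endpoint information plus convexity cannot force the right derivative at $0$ to be nonnegative, let alone to dominate $F(\bar x,y)$. Moreover, the convexity of the supremum term along $\gamma_{\bar xy}$ that you invoke from Proposition \ref{example}(ii) is only available for minimal geodesics, whereas here $\gamma_{\bar xy}$ is an arbitrary element of $\Gamma_{\bar xy}^Q$.

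The missing idea is to apply the equilibrium inequality not at $y$ but at intermediate points of the given geodesic close to $\bar x$. Choose $\bar t\in(0,1]$ so small that $\gamma_{\bar xy}([0,\bar t])\subseteq{\bf B}(\bar x,r_{\bar x})$ and set $\bar y:=\gamma_{\bar xy}(\bar t)\in Q$. Inside the convexity radius the geodesic joining $\bar x$ to $\bar y$ is unique and minimal, so $\exp_{\bar x}^{-1}\bar y=\{\bar t\,\dot\gamma_{\bar xy}(0)\}$ and the supremum defining $F(\bar x,\bar y)$ collapses to the one value you care about; hence $F(\bar x,\bar y)\ge0$ reads $\bar t\langle V(\bar x),\dot\gamma_{\bar xy}(0)\rangle+f(\bar y)-f(\bar x)\ge0$. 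Dividing by $\bar t$ and using convexity of $f\circ\gamma_{\bar xy}$, which gives $\bigl(f(\bar y)-f(\bar x)\bigr)/\bar t\le f(y)-f(\bar x)$, yields \eqref{MVIP}. This is exactly where one needs that $F(\bar x,\cdot)\ge0$ at \emph{all} points of $Q$ along the geodesic, and not merely at the endpoint $y$; your sketch never uses the inequality at such intermediate points, and without it the argument cannot close.
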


\begin{proof}
It is evident that ${\rm MVIP}(V,f,Q)\subseteq{\rm EP}(F,Q)$. To show the converse inclusion,
let $\bar x\in{\rm EP}(F,Q)$ and   it suffices to prove that \eqref{MVIP} holds.
To this end, let $y\in Q$ and  $\gamma_{\bar xy}\in\Gamma_{\bar xy}^Q$. We have to show that
\begin{equation}\label{EQ-MVIP-r2}
\langle V(\bar x),\dot{\gamma}_{\bar xy}(0)\rangle+f(y)-f(\bar x)\ge0.
\end{equation}
Take $\bar t\in(0,1]$   such that ${\rm d}(\bar x,\gamma_{\bar xy}(\bar t))\le r_{\bar x}$ (note that $r_{\bar x}>0$ by \eqref{cvx-rd-p}). Denote $\bar y :=\gamma_{\bar xy}(\bar t)$. Then  $\bar y\in Q$ and $\Gamma_{\bar x\bar y}^Q=\{\gamma_{\bar x\bar y}\}$ is a singleton, where 
   $\gamma_{\bar x\bar y}:[0,1]\rightarrow M$ is defined by $$\gamma_{\bar x\bar y}(s):=\gamma_{\bar xy}(\bar ts)\quad\mbox{for any }s\in[0,1].$$
   Then $\dot{\gamma}_{\bar x\bar y}(0)=\bar t\dot{\gamma}_{\bar xy}(0)$. 
In view of $\bar x\in{\rm EP}(F,Q)$ and $\bar y\in Q$, we see that
 $$
 \langle V(\bar x),\dot{\gamma}_{\bar x\bar y }(0)\rangle+f(\bar y)-f(\bar x)= \langle V(\bar x),\bar t\dot{\gamma}_{\bar xy }(0)\rangle+f(\bar y)-f(\bar x)\ge0.
$$
Noting that  $\frac{f(\bar y)-f(\bar x)}{\bar t}\le f(y)-f(\bar x)$ by the convexity of 
$f\circ {\gamma}_{\bar xy }$ (as $f$ is convex), we conclude  that \eqref{EQ-MVIP-r2} holds, which completes the proof. 
\end{proof}

We assume in the present subsection that

($\rm H_M$-a) $f$ is convex and $Q\subseteq{\rm int}\mathcal{D}(f)$ is closed   weakly convex. 

($\rm H_M$-b) $V $ is 
continuous on $Q$.


The following theorem gives the existence, the uniqueness and the convexity property about the solution set ${\rm MVIP}(V,f,Q)$.

\begin{theorem}\label{Existence-MVIP}
The following assertions hold:

{\rm (i)} Suppose that $Q$ 
contains a weak pole $o\in{\rm int}_RQ$. Then ${\rm MVIP}(V,f,Q)\neq\emptyset$ provided that  $Q$ is compact, or  $Q$ has the BCC property and there exists
 a compact subset $L\subseteq M$ such that
      \begin{equation}\label{L_compact-o-M}
        x\in Q\setminus L\Rightarrow[\forall v\in \partial f(x),\exists y\in Q\cap L,\gamma_{xy}\in{\rm min-}\Gamma_{xy}^Q \mbox{ s.t.
         }\langle V(x)+v,\dot{\gamma}_{xy}(0)\rangle<0].
      \end{equation}
%
%

{\rm (ii)} If $V+\partial f$ is strictly monotone on $Q$, then ${\rm MVIP}(V,f,Q)$ is at most a  singleton.

{\rm (iii)} If   $V+\partial f$ is monotone on $Q$ and ${\rm MVIP}(V,f,Q)\neq\emptyset$,
then ${\rm MVIP}(V,f,Q)$ is locally convex, and  is $D_\kappa$-convex
if $M$ is additionally assumed to be of the sectional curvatures bounded above by some $\kappa\ge 0$.
\end{theorem}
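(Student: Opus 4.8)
The plan is to reduce everything to the already-established results on the equilibrium problem by checking that the bifunction $F$ defined in \eqref{MVIP-F} satisfies hypotheses {\rm (H1)}--{\rm (H4)} and, where needed, the monotonicity hypotheses, and then invoking Theorems \ref{Existence-EP-2} and \ref{EP-uinique} together with the identity \eqref{EQ-MVIP}. First I would observe that $F$ has the form $F=G_V+G$, where $G_V$ is the bifunction in \eqref{f-G-G} associated to the (single-valued, hence compact-valued) vector field $V$, and $G(x,y):=f(y)-f(x)$. Since $f$ is convex with weakly convex domain containing $Q$ (assumption $(\rm H_M$-a)), the map $y\mapsto G(x,y)=f(y)-f(x)$ is convex, in particular point-wise weakly convex on $Q$; by Proposition \ref{example}(iii) the sum $G_V+G$ is then point-wise weakly convex on $Q$, which gives the first part of {\rm (H1)}. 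The requirement $x\in{\rm int}\mathcal{D}(F(x,\cdot))$ follows from $Q\subseteq{\rm int}\mathcal{D}(f)$ together with $\mathcal{D}(G_V(x,\cdot))=M$ (Proposition \ref{example}(i)). The same convexity argument, now applied to $f+\delta_Q$ (convex because $Q$ is weakly convex and $f$ is convex) and to $G_V+\delta_{Q\times Q}$ via Proposition \ref{example}(ii)--(iii), yields {\rm (H2)}. Hypothesis {\rm (H3)}, upper semicontinuity of $x\mapsto F(x,y)$, comes from Proposition \ref{example}(v) applied to $G_V$ (using that $V$ is continuous, hence usc, by $(\rm H_M$-b)) plus continuity of $f$ on $Q$; and {\rm (H4)} is immediate since $F(x,x)=0$ for all $x\in Q$.

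Next I would identify the associated vector field. Using the sum rule Lemma \ref{Sub-Sum} (applicable since all three functions are weakly convex at $x\in{\rm int}\mathcal{D}(f)$) together with Proposition \ref{example}(iv), one gets
\begin{equation*}
A_F(x)=\partial F(x,\cdot)(x)=\partial G_V(x,\cdot)(x)+\partial f(x)=\{V(x)\}+\partial f(x)=V(x)+\partial f(x)
\end{equation*}
for each $x\in Q$ (here $\overline{\rm co}\{V(x)\}=\{V(x)\}$). With this identification, \eqref{L_compact-o-M} is precisely condition \eqref{L_compact} of Proposition \ref{LY-Existence-coercive}(b) written out for $A=A_F$, and the strict/plain monotonicity hypotheses on $V+\partial f$ in parts (ii) and (iii) are exactly strict/plain monotonicity of $A_F$. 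Moreover, monotonicity of $A_F$ together with $F(x,x)=0$ supplies \eqref{f00}.

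For assertion (i), having verified {\rm (H1)}--{\rm (H4)} and that $Q$ contains a weak pole $o\in{\rm int}_RQ$, I would apply Theorem \ref{Existence-EP-2}: in the compact case directly, and in the other case noting that \eqref{L_compact-o-M} gives assumption (b) of Proposition \ref{LY-Existence-coercive} for $A_F$ (the $Q$-has-BCC hypothesis is assumed). This gives ${\rm EP}(F,Q)\neq\emptyset$, hence ${\rm MVIP}(V,f,Q)\neq\emptyset$ by \eqref{EQ-MVIP}. For assertions (ii) and (iii), I would invoke Theorem \ref{EP-uinique}: {\rm (H1)}--{\rm (H3)} hold, $A_F$ is monotone (resp. strictly monotone) on $Q$ by hypothesis and \eqref{f00} holds, so ${\rm EP}(F,Q)$ is locally convex, a singleton under strict monotonicity, and $D_\kappa$-convex when the curvature is bounded above by $\kappa>0$; one also checks the $\kappa=0$ case reduces to local convexity plus the Hadamard-type argument, or simply notes $D_0$-convexity coincides with weak convexity. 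Translating back through \eqref{EQ-MVIP} gives the stated properties of ${\rm MVIP}(V,f,Q)$.

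The main obstacle I anticipate is the sum-rule step: Lemma \ref{Sub-Sum} requires $G_V(x,\cdot)$, $f$, and their sum all to be weakly convex at $x$ and $x\in{\rm int}\mathcal{D}(G_V(x,\cdot))\cap\mathcal{D}(f)$. The first is guaranteed by Proposition \ref{example}(ii), the second by $(\rm H_M$-a), and the membership condition by $\mathcal{D}(G_V(x,\cdot))=M$ and $Q\subseteq{\rm int}\mathcal{D}(f)$; so the hypotheses are met, but this is the one place where a careless application would go wrong. A secondary point requiring care is the $\kappa=0$ endpoint in assertion (iii), where $D_\kappa=+\infty$ and ``$D_\kappa$-convex'' must be interpreted as weak convexity of the solution set; this follows from part (i) of Theorem \ref{EP-uinique} together with the remark that all the convexity notions collapse appropriately, so no separate argument is needed beyond a sentence of interpretation.
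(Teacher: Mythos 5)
Your proposal is correct and follows essentially the same route as the paper: the same decomposition $F=G_V+G$, the same verification of (H1)--(H4) via Proposition \ref{example}, the same identification $A_F=V+\partial f$ through Lemma \ref{Sub-Sum} and Proposition \ref{example}(iv), and the same reduction to Theorems \ref{Existence-EP-2} and \ref{EP-uinique} via \eqref{EQ-MVIP}. Your flagged concern about the sum-rule hypotheses is resolved exactly as you describe, so no changes are needed.
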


\begin{proof}
We first  show that $F$ satisfies hypotheses (H1)-(H4) made in Section 3. To do this, let $G_V:M\times M\rightarrow \IR$ be defined by \eqref{f-G-G}, and let $G:M\times M\rightarrow \IR$ be defined by
\begin{equation}\label{F-1-G}
G(x,y):=f(y)-f(x)\quad\mbox{for any } (x,y)\in Q\times M.
\end{equation}
Then $F=G_V+G$. Noting  by assumption ($\rm H_M$-a) that  both $G$ and $G+\delta_{Q\times Q}$ are point-wise weakly convex on $Q$, we see
from Proposition \ref{example}(iii) that $F=G_V+G$ and $F+\delta_{Q\times Q}=G_V+ (G +\delta_{Q\times Q})$ are point-wise weakly convex on $Q$.
This particularly means  that $F$ satisfies (H2). To show (H1) and (H4), recalling \eqref{domain-G-G} in  Proposition \ref{example}(i), one checks
that   
$$\mathcal{D}(F(x,\cdot))=\mathcal{D}(G(x,\cdot))\bigcap\mathcal{D}(G_V(x,\cdot))=\mathcal{D}(f)\quad\mbox{ for any }x\in Q.$$
In view of assumption ($\rm H_M$-a), (H1) is checked; while  (H4) is trivial since, by \eqref{domain-G-G}, ${F(x,x)}=0$ for any $x\in Q$. Thus it remains to   check (H3). Since by assumption ($\rm H_M$-a),   the function $x\mapsto G(x,y)$ is continuous on $Q$ (see Lemma \ref{Convex-f-P} (i)). In view of assumption ($\rm H_M$-b), Proposition \ref{example}(v) is applicable to getting that $x\mapsto G_V(x,y)$ is usc on $Q$ and so is $F$. Thus, (H3) is checked.
Next, we check that
\begin{equation}\label{mix-A-F}
A_F(x)=V(x)+\partial f(x)\quad \mbox{for each } x\in Q,
\end{equation}
where $A_F$ is  defined by \eqref{VIP-10}.
Granting this, one verifies that conditions of Theorems \ref{Existence-EP-2} and  \ref{EP-uinique} are satisfied, and then assertions (i)-(iii) follow by \eqref{EQ-MVIP} (which is valid by assumption {\rm ($\rm H_M$-a)}). 
To show \eqref{mix-A-F},
let $x\in Q$. Then $\partial G_V(x,\cdot)(x)=V(x)$ by Proposition \ref{example}(iv). Thus, by assumption ($\rm H_M$-a), one applies Lemma \ref{Sub-Sum} to
obtain
\eqref{mix-A-F} 
and the proof is complete.
 \end{proof}

With a similar argument that we did for Corollary \ref{Existence-Nash-H}, but using Theorem \ref{Existence-MVIP} in
place of Theorem \ref{Existence-Nash}, we have the following corollary. In particular, assertion (i) was claimed in \cite[Theorem 3.5]{Colao2012} with its proof being incorrect; while assertion (ii) is new even in the Hadamard manifold setting.

\begin{corollary}\label{Existence-MVIP-H}
Suppose that $M$ is a Hadamard manifold. Then, the following assertions hold:

{\rm (i)} The solution set ${\rm MVIP}(V,f,Q)\neq\emptyset $ provided $Q$ is compact, or there exists
 a compact subset $L\subseteq M$ such that \eqref{L_compact-o-M} holds.

{\rm (ii)} If $V+\partial f$ is monotone on $Q$ with ${\rm MVIP}(V,f,Q)\not=\emptyset $, then ${\rm MVIP}(V,f,Q)$ is   convex.
\end{corollary}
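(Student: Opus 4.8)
The plan is to obtain Corollary~\ref{Existence-MVIP-H} from Theorem~\ref{Existence-MVIP} by the same specialization used to pass from Theorem~\ref{Existence-Nash} to Corollary~\ref{Existence-Nash-H}: when $M$ is a Hadamard manifold the three structural ingredients demanded by Theorem~\ref{Existence-MVIP} --- that $Q$ contain a weak pole $o\in{\rm int}_RQ$, that $Q$ have the BCC property, and (for the structure assertion) that the sectional curvatures be bounded above by some $\kappa\ge0$ --- are all automatically available.

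First I would invoke the remark following Definition~\ref{convexset}, by which weak convexity, strong convexity, $r$-convexity and geodesic convexity coincide in a Hadamard manifold; thus the standing assumption ($\rm H_M$-a) already makes $Q$ a nonempty closed convex, hence connected, subset. Writing $Q=\overline N$ with $N={\rm int}_RQ$ the totally geodesic submanifold furnished by the discussion preceding the BCC definition, I would take any $o\in{\rm int}_RQ$; since minimal geodesics in a Hadamard manifold are unique and, by convexity of $Q$, remain in $Q$, one has ${\rm min-}\Gamma_{ox}=\Gamma_{ox}$ a singleton contained in $Q$ for every $x\in Q$, so $o$ is a weak pole of $Q$ lying in ${\rm int}_RQ$. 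For the BCC property, I would observe that for any $R\ge0$ the set $Q\cap{\bf B}(o,R)$ is contained in $\overline{{\bf B}(o,R)}$, which is compact by the Hopf--Rinow theorem and convex --- hence weakly convex --- since $M$ is Hadamard; this is exactly the (strong) form of the BCC property used in Proposition~\ref{LY-Existence-coercive}, and therefore in Theorem~\ref{Existence-MVIP}.

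Granting these facts, assertion (i) is immediate: whether $Q$ is compact, or $Q$ has the BCC property and a compact $L$ with \eqref{L_compact-o-M} exists, Theorem~\ref{Existence-MVIP}(i) applies and gives ${\rm MVIP}(V,f,Q)\ne\emptyset$. For assertion (ii) I would put $\kappa=0$; then $M$ has sectional curvatures bounded above by $\kappa$ and $D_\kappa=+\infty$, so Theorem~\ref{Existence-MVIP}(iii) shows ${\rm MVIP}(V,f,Q)$ is $D_0$-convex, which, $D_0$-convexity being $r$-convexity with $r=+\infty$ and hence ordinary convexity in a Hadamard manifold, is the desired conclusion.

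I do not expect a real obstacle here; the proof is a routine specialization, and the only points requiring attention are the bookkeeping items above --- the uniqueness-of-minimal-geodesics argument that produces the weak pole, the compactness and convexity of $\overline{{\bf B}(o,R)}$ in a Hadamard manifold, and the identification of $D_0$-convexity with convexity --- each of which is standard and already used elsewhere in the paper.
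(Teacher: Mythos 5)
Your proposal is correct and follows essentially the same route as the paper, which derives the corollary from Theorem \ref{Existence-MVIP} by noting that in a Hadamard manifold a closed convex set automatically has a weak pole in its relative interior and the BCC property (closed balls being compact and convex), and that the curvature bound holds with $\kappa=0$ so that $D_\kappa$-convexity reduces to ordinary convexity. The paper leaves these verifications implicit (pointing back to the paragraph before Corollary \ref{Existence-EP-uni-3-H}); you have simply spelled them out.
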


\begin{remark}\label{Nash-CP}
Under one of the following assumptions,  a compact subset $L\subseteq M$ exists such that   \eqref{L_compact-o-M}  holds:


{\rm (a)} $V$ satisfies the coerciveness condition on $Q$;

{\rm (b)} $\partial f$ satisfies the coerciveness condition on $Q$ and $V$ is monotone on $Q$.

\noindent {Indeed, in view of assumption ($\rm H_M$-a), we see that $\partial f$ is  monotone on $Q$ by definition of the subdifferential of $f$.
Assuming (b) or (c), it is easy to verify by definition that $V+\partial f$ satisfies the coerciveness condition on $Q$. Thus, one checks that \eqref{L_compact-o-M} is satisfied as we have explained in Remark \ref{remark-co} with $V+\partial f$ in place of $A$.}
\end{remark}




\begin{thebibliography}{10}

\bibitem{Absil2008}
{\sc P.~A. Absil, R.~Mahony, and R.~Sepulchre}, {\em {Optimization Algorithms
  on Matrix Manifolds}}, Princeton University Press, Princeton, NJ, 2008.

\bibitem{Adler2002}
{\sc R.~Adler, J.~P. Dedieu, J.~Margulies, M.~Martens, and M.~Shub}, {\em
  {Newton¡¯s method on Riemannian manifolds and a geometric model for the human
  spine}}, IMA J. Numer. Anal., 22 (2002), pp.~359--390.

\bibitem{Homidan2008}
{\sc S.~Al-Homidan, Q.~H. Ansari, and J.C. Yao}, {\em {Some generalizations of
  Ekeland-type variational principle with applications to equilibrium problems
  and fixed point theory}}, Nonlinear Anal., 69 (2008), pp.~126--139.

\bibitem{Bianchi2005}
{\sc M.~Bianchi, G.~Kassay, and R.~Pini}, {\em {Existence of equilibria via
  Ekeland's principle}}, J. Math. Anal. Appl., 305 (2005), pp.~502--512.

\bibitem{Bianchi1996}
{\sc M.~Bianchi and S.~Schaible}, {\em {Generalized monotone bifunctions and
  equilibrium problems}}, J. Optim. Theory Appl., 90 (1996), pp.~31--43.

\bibitem{Blum1994}
{\sc E.~Blum and W.~Oettli}, {\em {From optimization and variational
  inequalities to equilibrium problems}}, Math. Student, 63 (1994),
  pp.~123--145.

\bibitem{Bridson1999}
{\sc M.~Bridson and A.~Haefliger}, {\em {Metric Spaces of Non-Positive
  Curvature}}, Springer-Verlag, Berlin, 1999.

\bibitem{Burke2001}
{\sc J.~V. Burke, A.~Lewis, and M.~Overton}, {\em {Optimal stability and
  eigenvalue multiplicity}}, Found. Comput. Math., 1 (2001), pp.~205--225.

\bibitem{Castellani2010}
{\sc M.~Castellani, M.~Pappalardo, and M.~Passacantando}, {\em {Existence
  results for nonconvex equilibrium problems}}, Optimization Methods and
  Software, 25 (2010), pp.~49--58.

\bibitem{Chadli2000}
{\sc O.~Chadli, Z.~Chbani, and H.~Riahi}, {\em {Equilibrium problems with
  generalized monotone bifunctions and applications to variational
  inequalities}}, J. Optim. Theory Appl., 105 (2000), pp.~299--323.

\bibitem{Colao2012}
{\sc V.~Colao, G.~L\'{o}pez, G.~Marino, and V.~Mart\'{\i}n-M\'{a}rquez}, {\em
  {Equilibrium problems in Hadamard manifolds}}, J. Math. Anal. Appl., 388
  (2012), pp.~61--77.

\bibitem{Combettes2005}
{\sc P.~L. Combettes and S.A. Hirstoaga}, {\em {Equilibrium programming in
  Hilbert spaces}}, J. Nonlinear Convex Anal., 1 (2005), pp.~117--136.

\bibitem{Carmo1992}
{\sc M.~P. DoCarmo}, {\em {Riemannian Geometry}}, Birkh\"{a}user Boston, Boston
  MA, 1992.

\bibitem{Ferreira2005}
{\sc O.~P. Ferreira, L.~R. lucambio P\'{e}rez, and S.~Z. N\'{e}meth}, {\em
  {Singularities of monotone vector fields and an extragradient-type
  algorithm}}, J. Global Optim., 31 (2005), pp.~133--151.

\bibitem{Flam1997}
{\sc S.~D. Fl{\aa}m and A.S. Antipin}, {\em {Equilibrium programming using
  proximal like algorithms}}, Math. Program., 78 (1997), pp.~29--41.

\bibitem{Georgiev2005}
{\sc P.~Gr. Georgiev}, {\em {Parametric Borwein-Preiss variational principle
  and applications}}, Proc. Amer. Math. Soc., 133 (2005), pp.~3211--3225.

\bibitem{Glowinski1981}
{\sc R.~Glowinski, J.~L. Lions, and R.~Tremolieres}, {\em {Numerical Analysis
  of Variational Inequalities}}, North-Holland, Amsterdam, 1981.

\bibitem{Greene-Wu}
{\sc R.~E. Greene and H.~Wu}, {\em {On the subharmonicity and
  plurisubharmonicity of geodesically convex functions}}, Indiana Univ. Math.
  J., 22 (1973), pp.~641--653.

\bibitem{He1999}
{\sc B.~S. He}, {\em {Inexact implicit methods for monotone general variational
  inequalities}}, Math. Program., 86 (1999), pp.~199--217.

\bibitem{Iusem2009}
{\sc A.N. Iusem, G.~Kassay, and W.~Sosa}, {\em {On certain conditions for the
  existence of solutions of equilibrium problems}}, Math. Program., 116 (2009),
  pp.~259--273.

\bibitem{Iusem2003b}
{\sc A.N. Iusem and W.~Sosa}, {\em {Iterative algorithms for equilibrium
  problems}}, Optimization, 52 (2003), pp.~301--316.

\bibitem{Konnov2009}
{\sc I.~V. Konnov}, {\em {Application of the proximal point method to
  nonmonotone equilibrium problems}}, J. Optim. Theory Appl., 119 (2009),
  pp.~317--333.

\bibitem{Kristaly2010}
{\sc A.~Krist\'{a}ly}, {\em {Location of Nash equilibria: a Riemannian
  geometrical approach}}, Proc. Amer. Math. Soc., 138 (2010), pp.~1803--1810.

\bibitem{Kristaly2014}
\leavevmode\vrule height 2pt depth -1.6pt width 23pt, {\em {Nash-type
  equilibria on Riemannian manifolds: A variational approach}}, J. Math. Pures
  Appl., 101 (2014), pp.~660--688.

\bibitem{KLLN2015}
{\sc A.~Krist\'{a}ly, C.~Li, G.~Lopez, and A.~Nicolae}, {\em {What do
  ``convexities" imply on Hadamard manifolds?}}, J. Optim. Theory Appl., 170
  (2016), pp.~1068--1079.

\bibitem{Li2009}
{\sc C.~Li, G.~L\'{o}pez, and V.~Mart\'{\i}n-M\'{a}rquez}, {\em {Monotone
  vector fields and the proximal point algorithm on Hadamard manifolds}}, J.
  Lond. Math. Soc., 79 (2009), pp.~663--683.

\bibitem{LiMWY2011}
{\sc C.~Li, B.~S. Mordukhovich, J.~Wang, and J.~C. Yao}, {\em {Weak sharp
  minima on Riemannian manifolds}}, SIAM J. Optim., 21 (2011), pp.~1523--1560.

\bibitem{LiY2012}
{\sc C.~Li and J.~C. Yao}, {\em {Variational inequalities for set-valued vector
  fields on Riemannian manifolds: convexity of the solution set and the
  proximal point algorithm}}, SIAM J. Control Optim., 50 (2012),
  pp.~2486--2514.

\bibitem{LiLi2009}
{\sc S. L. Li, C. Li, Y. C. Liou, and J. C. Yao}, {\em {Existence of solutions for
  variational inequalities on Riemannian manifolds}}, Nonlinear Anal., 71
  (2009), pp.~5695--5706.

\bibitem{Mahony1996}
{\sc R.~E. Mahony}, {\em {The constrained Newton method on Lie group and the
  symmetric eigenvalue problem}}, Linear Algebra Appl., 248 (1996), pp.~67--89.

\bibitem{Miller2005}
{\sc S.~A. Miller and J.~Malick}, {\em {Newton methods for nonsmooth convex
  minimization: Connections among U-Lagrangian, Riemannian Newton and SQP
  methods}}, Math. Program., 104 (2005), pp.~609--633.

\bibitem{Morgan2007}
{\sc J.~Morgan and V.~Scalzo}, {\em {Pseudocontinuous functions and existence
  of Nash equilibria}}, J. Math. Econom., 43 (2007), pp.~174--183.

\bibitem{Nash1950}
{\sc J.~Nash}, {\em {Equilibrium points in $n$-person games}}, Pro. Nat. Acad.
  Sci. U.S.A., 36 (1950), pp.~48--49.

\bibitem{Nash1951}
\leavevmode\vrule height 2pt depth -1.6pt width 23pt, {\em {Non-cooperative
  games}}, Ann. of Math., 54 (1951), pp.~286--295.

\bibitem{Nessah2008}
{\sc R.~Nessah and K.~Kerstens}, {\em {Characterizations of the existence of
  Nash equilibria with non-convex strategy sets}}, Working Papers 2008-ECO-13,
  IESEG School of Management,  (2008).

\bibitem{Oettli1997}
{\sc W.~Oettli}, {\em {A remark on vector-valued equilibria and generalized
  monotonicity}}, Acta Math. Vietnam., 21 (1997), pp.~215--221.

\bibitem{Rockafellar1970}
{\sc R.~T. Rockafellar}, {\em {Convex Analysis}}, Princeton University Press,
  Princeton, 1970.

\bibitem{Rosen1965}
{\sc J.~B. Rosen}, {\em {Existence and uniqueness of equilibrium points for
  concave Nperson games}}, Econometrica, 33 (1965), pp.~520--534.

\bibitem{Sakai1996}
{\sc T.~Sakai}, {\em {Riemannian Geometry}}, Transl. Math. Monogr., AMS, Providence, 1996.

\bibitem{Smith1994}
{\sc S.~T. Smith}, {\em Geometric Optimization Methods for Adaptive Filtering},
  PhD Thesis, Harvard University Cambridge Massachusetts, 1994.

\bibitem{Tala1996}
{\sc J. E. Tala and E.~Marchi}, {\em {Games with non-convex strategy sets}},
  Optimization, 37 (1996), pp.~177--181.

\bibitem{Udriste1994}
{\sc C.~Udriste}, {\em {Convex Functions and Optimization Methods on Riemannian
  Manifolds}}, Math. Appl., Springer, New York, 1994.

\bibitem{Wang2010}
{\sc J.~H. Wang, G.~L\'{o}pez, V.~Mart\'{\i}n-M\'{a}rquez, and C.~Li}, {\em
  {Monotone and accretive vector fields on Riemannian manifolds}}, J. Optim.
  Theory Appl., 146 (2010), pp.~691--708.

\bibitem{WangYangHe2001}
{\sc S. L. Wang, H.~Yang, and B.S. He}, {\em {Inexact implicit method with
  variable parameter for mixed monotone variational inequalities}}, J. Optim.
  Theory Appl., 111 (2001), pp.~431--443.

\bibitem{Yang2007}
{\sc Y.~Yang}, {\em {Globally convergent optimization algorithms on Riemannian
  manifolds: uniform framework for unconstrained and constrained
  optimization}}, J. Optim. Theory Appl., 132 (2007), pp.~245--265.

\bibitem{Bacak2013}
{\sc M.~Bacak}, {\em {The proximal point algorithm in metric spaces}}, Israel Journal of Mathematics, 194 (2013), pp.~689--701.

\bibitem{Batista2015}
{\sc E. E. A.~Batista, G. C.~Bento, and O. P.~Ferreira}, {\em {An existence result for the generalized vector equilibrium problem on Hadamard manifolds}}, J. Optim. Theory Appl., 167 (2015), pp.~550--557.

\bibitem{Ferreira2002}
{\sc P. R.~Oliveira, and O. P.~Ferreira}, {\em {Proximal Point Algorithm on Riemannian Manifolds}}, Optimization, 51 (2002), pp.~257--270.

\bibitem{Nemeth2003}
{\sc S. Z.~Nemeth}, {\em {Variational inequalities on Hadamard manifolds}}, Nonlinear Anal., 52(5) (2003), pp.~1491--1498.

\bibitem{Afsari2010}
{\sc B.~Afsari}, {\em {Riemannian $L^p$ center of mass: existence, uniqueness, and
  convexity}}, Proc. Amer. Math. Soc., 139(2) (2010), pp.~655--673.

\end{thebibliography}

\end{document}